\newtheorem{definition}{Definition}[section]
\newtheorem{theorem}{Theorem}[section]
\newtheorem{proposition}{Proposition}[section]
\newtheorem{remark}{Remark}[section]
\newtheorem{lemma}{Lemma}[section]
\newtheorem{corollary}{Corollary}[section]
\newtheoremstyle{assumption}{6pt}{6pt}{\rm}{}{\sffamily}{ }{ }{}
\theoremstyle{assumption}
\newtheorem{assumption}{\sc Assumption}[section]
\newcommand*{\sprod}[2]{\langle #1, #2 \rangle}
\newcommand*{\bb}[1]{\mathbb{#1}}
\newcommand*{\floor}[1]{{\lfloor #1 \rfloor}}
\newcommand*{\ceil}[1]{{\lceil #1 \rceil}}
\newcommand*{\bt}[1]{\tilde{#1}}
\def\colvecnext#1{
	#1
	\global\advance\colveccount-1
	\ifnum\colveccount>0
	,
	\expandafter\colvecnext
	\else
	)
	\fi
}
\providecommand{\customgenericname}{}
\title{Convergence of Preconditioned Hamiltonian Monte Carlo on Hilbert Spaces}
\author{%
	{Jakiw Pidstrigach\thanks{Institute of Mathematics, Universit\"at Potsdam, Karl-Liebknecht-Straße 24-25, 14476 Potsdam, Germany, Email: jakiw.pidstrigach@uni-potsdam.de}}
}
\begin{document}
	
	\maketitle
	
	\begin{abstract}
		{In this article, we consider the preconditioned Hamiltonian Monte Carlo (pHMC) algorithm defined directly on an infinite-dimensional Hilbert space.  In this context, and under a condition reminiscent of strong log-concavity of the target measure, we prove convergence bounds for adjusted pHMC in the standard 1-Wasserstein distance.   
			The arguments rely on a synchronous coupling of two copies of pHMC, which is controlled by adapting elements from Bou-Rabee, Eberle, Zimmer (2020).}  
		
		\textbf{Keywords and phrases:}
		{coupling; convergence to equilibrium; Markov Chain Monte Carlo in infinite dimensions; Hamiltonian Monte Carlo;  Hybrid Monte Carlo; geometric integration; preconditioning; Metropolis-Hastings; Hilbert spaces; Wasserstein distance.}
	\end{abstract}
	
	\section{Introduction}
	\subsection{Hamiltonian Monte Carlo}
	Hamiltonian Monte Carlo (HMC) is a a Markov Chain Monte Carlo (MCMC) method for sampling from complex probability measures whose normalizing constant is unknown. It originated under the name ``Hybrid Monte Carlo'' in \citet{duane1987hybrid} in statistical physics. The `target' measure that HMC can sample from has the form
	\begin{equation}
		\label{equ:hmc_measure_intro}
		\dif \pi(q) \propto \exp(-\Phi(q)) \dif q,
	\end{equation}
	i.e. the target measure has a positive density with respect to the Lebesgue measure $\dif q$. The $\propto$ means that the density of $\pi$ w.r.t. $\dif q$ is proportional to $\exp(-\Phi(q))$ and only differs by a normalizing factor. 
	
	This allows HMC to sample a wide variety of different measures, arising for example in Bayesian statistics and Machine Learning (\citet{bishop2006pattern}), Scientific Computing (\citet{liu2008monte}) and material sciences (\citet{prokhorenko2018large}). A good introduction to Hamiltonian Monte Carlo is given in \citet{neal_2012}.
	
	In each step, HMC proposes a new state by integrating a Hamiltonian system that keeps \eqref{equ:hmc_measure_intro} invariant. Afterwards the new state is either accepted or rejected, based on the energy error made when numerically integrating the Hamiltonian differential equations. We will make this more explicit soon. This results in a Markov chain with invariant measure $\pi$. The following sections give a quick overview of the different flavours of HMC that exist to finally situate this work within the literature.
	
	\subsection{Preconditioned HMC}
	Here we deal with the special case that the measure is $\pi$ is given as
	\begin{equation}
		\label{equ:pi_wrt_to_gaussian_lebesgue}
		\dif\pi(q) \propto \exp(-\Phi(q) - \sprod{q}{C^{-1}q}) \dif q
	\end{equation}
	or equivalently
	\begin{equation*}
		\dif\pi(q) \propto \exp(-\Phi(q)) \dif \pi_0(q)
	\end{equation*}
	where $\pi_0$ is a centred Gaussian measure with covariance matrix $C$. This especially includes the setting where $\pi_0$ is a Gaussian measure on an infinite dimensional Hilbert space. In that case Equation \eqref{equ:pi_wrt_to_gaussian_lebesgue} does not make sense since there is no Lebesgue measure $\dif q$ in infinite dimensions and $\sprod{q}{C^{-1}q}$ would be $\pi_0$-almost surely infinite. 
	The infinite dimensional setting occurs, for example, in Bayesian inverse problems (\citet{stuart2010inverse}),  when sampling paths of conditioned diffusions (\citet{hairer2011signal}), and path integral molecular dynamics (\citet{ChandlerWolynes}).
	Preconditioned HMC was introduced as Hilbert Space Hamiltonian Monte Carlo in \citet{beskos2011hybrid}. Another paper dealing with pHMC and proving some more properties is \citet{ottobre2016function}.
	
	Often the time step size requirement of many MCMC algorithms deteriorates when the dimension $d$ of the sample space increases. HMC already compares favourably to other MCMC algorithms applied to high dimensional perturbations of product measures.  In particular, while the optimal step sizes of the Random Walk Metropolis and Metropolis Adjusted Langevin Algorithm deteriorate like $O(d^{-1})$ and $O(d^{-1/3})$ respectively (\citet{roberts1997weak}, \citet{roberts1998optimal}, \citet{pillai2012optimal}, \citet{roberts2001optimal}), the step size of HMC only deteriorates like $O(d^{-1/4})$ (\citet{beskos2013optimal}). 
	
	Preconditioned HMC exploits the specific structure of $\pi$ given above. As a result, pHMC is a well defined algorithm in an infinite-dimensional Hilbert space. Therefore the stepsize does not deteriorate with the dimension and is $O(1)$. The price to pay is that the limiting Gaussian measure $\pi_0$ has to exist. An important case where high dimensional distributions are of interest, but do not take place on an infinite-dimensional Hilbert space are mean-field models. Recent progress in showing dimension-independence in that case has been made in \citet{bou2020convergence}.

	We will give quantitative bounds on the convergence speed of pHMC to its stationary distribution in a setting analogous to strong convexity of the potential in finite dimensions.

	\subsection{Exact, Adjusted, And Unadjusted HMC}
	When generating the proposal for the next state of the Markov Chain one needs to integrate a Hamiltonian system. The analysis of the algorithm is simplified if one assumes that we can make that integration exact, i.e. with no numerical integration error. This is called \emph{exact HMC}. The acceptance probability for the new proposed state will then always be one. Unfortunately in practice we can only numerically approximate the trajectories of the Hamiltonian system. To remove the integration error bias, we add a Metropolis Hastings Acceptance/Rejection step. This is also called Metropolis-Adjustment of the algorithm. We call this version of the algorithm \emph{adjusted HMC}. This version of the algorithm is actually implementable and is what is often just called ``HMC''. Another interesting version of HMC, recently explored in \citet{bou2020convergence} and \citet{bou2020two} is unadjusted HMC. In this case one does not Metropolis-Adjust the numerical integrator. The dynamics then converge to the ``wrong'' invariant measure $\pi_h$ which depends on the step size in the numerical integrator. One can study the distance of $\pi_h$ to $\pi$. This is also implementable on a computer.
	
	The exact HMC algorithm can be seen as zero-stepsize limit of adjusted HMC or unadjusted HMC and is therefore an important special case. 
	
	\subsection{Convexity of the Potential}
	Another simplification that is often done is to assume that the potential $\Phi$ is strongly convex (i.e. strong log-concavity of the density). This excludes multimodal distributions which are also of great interest in practice but still includes many important special cases.
	
	\subsection{Prior Work}
	Geometric ergodicity of variants of HMC was verified in \citet{bou2017randomized}, \citet{2017arXiv170500166D}, \citet{livingstone2019geometric}. Quantitative bounds on the convergence speed have been proven in the case where $\Phi$ is strongly convex in \citet{mangoubi2017mixing}. In \citet{bou2018coupling} quantitative bounds for exact and adjusted HMC in a general non-convex case are proven. Quantitative convergence bounds for exact pHMC have been shown in \citet{bou2020two} without assuming any convexity of the potential. In \citet{bou2020convergence} explicit, dimension independent convergence rates for unadjusted HMC in the case of mean-field models were shown. In \cite{bou2020couplings} convergence for Anderson Dynamics in high dimensions was studied.
	
	\subsection{Placement of this Work}
	This work closely follows \citet{bou2018coupling} and originates from the author's master thesis under the supervision of Prof. Eberle.
	The work treats the exact and adjusted case of pHMC and assumes something analogous to strong convexity of the potential and a Lipschitz gradient condition. As far as we can tell, it is the first work to obtain explicit bounds on the convergence of pHMC on Hilbert spaces.
	\section{Preliminaries}
	\subsection{Assumptions}
	\label{sec:sobolev_like_spaces}
	This section introduces the basic definitions and assumptions needed to make pHMC work in infinite dimensions. For a more detailed discussion and proofs we refer the reader to \citet{beskos2011hybrid}.
	
	Let $\pi_0 \sim \mathcal{N}(0, C)$ be a centred Gaussian measure on a separable Hilbert space $(\mathcal{H}, \sprod{\cdot}{\cdot})$. Since $C$ is a covariance operator of a Gaussian measure it is trace-class, hence compact, and self-adjoint. Therefore we can apply the spectral theorem to get an $\sprod{\cdot}{\cdot}$-orthonormal eigenbasis $\{\phi_i\}_{i=1}^\infty$ with eigenvalues $\lambda_i^2 \in \mathbb{R}$, i.e.
	\begin{equation*}
		C\phi_i = \lambda_i^2\phi_i.
	\end{equation*}
	We also assume that $C$ is injective, i.e. for all $i$, $\lambda_i > 0$. 
	Using this eigenbasis we can define the \emph{Sobolev-like} spaces $(\mathcal{H}^s, \sprod{\cdot}{\cdot}_s)$ as the closure of $\{\phi_i\}_{i=1}^\infty$ w.r.t. the scalar product
	\begin{equation*}
		\sprod{f}{g}_s = \sum_{j=1}^\infty j^{2s} \sprod{f}{\phi_j} \sprod{g}{\phi_j}.
	\end{equation*}
	We see that $\mathcal{H}^0 = \mathcal{H}$ and for $q < r$, $\mathcal{H}^r \subset \mathcal{H}^q$. Heuristically speaking, higher values of $s$ give us higher regularity and a stricter norm. Introducing these spaces gives us the freedom to prove the bounds in the strongest norm possible.
	We also make the following assumption:
	\begin{assumption}
		\label{ass:std_deviations_decay}
		The standard deviations $\lambda_j > 0$ decay at a polynomial rate $\kappa > \frac{1}{2}$, i.e. 
		\begin{equation*}
			0 < \liminf_{j \to \infty} j^\kappa \lambda_j \le \limsup_{j \to \infty} j^\kappa \lambda_j < \infty.
		\end{equation*}
	\end{assumption}
	\begin{remark}
		\assref{ass:std_deviations_decay} guarantees that $\pi_0$ is supported on any $\mathcal{H}^s$ with $s < \kappa - \frac{1}{2}$ (see Proposition 3.1 of \citet{beskos2011hybrid}).
	\end{remark}
	\begin{remark}
		In the more recent paper \citet{bou2020two}, the spaces $\mathcal{H}^s$ are defined directly in terms of the covariance operator $C$, as $\sprod{x}{y}_s = \sprod{x}{C^{-s}y}$, simplifying \assref{ass:std_deviations_decay}. Our definition and assumption resembles the approach of \citet{beskos2011hybrid}.
	\end{remark}
	Recall that we want to sample a measure that has density proportional to $\exp(-\Phi(q))$ w.r.t. to $\pi_0$. We will also need to make assumptions on the \emph{potential}
	$ \Phi: \mathcal{H} \to \mathbb{R}$.
	\begin{assumption}
		\label{ass:phi_cont_dphi_lipschitz}
		There exists an $l \in \intco{0, \kappa -\frac{1}{2}}$ where $\kappa$ is as in \assref{ass:std_deviations_decay}, such that $\Phi: \mathcal{H}^l \to \bb{R}$ is continuous and the Fréchet derivate $\Dif\Phi: \mathcal{H}^l \to \mathcal{H}^{-l}$ is globally Lipschitz continuous, i.e. there exists an $L > 0$ such that for all $q, q' \in \mathcal{H}^l$
		\begin{equation*}
			\abs{\partial_h \Phi(q) - \partial_h \Phi(q')} \le L \norm{q - q'}_l \norm{h}_l.
		\end{equation*}
		We can also equivalently formulate the above condition through the total derivative as
		\begin{equation*}
			\norm{\Dif\Phi(q) - \Dif\Phi(q')}_{-l} \le L \norm{q - q'}_l.
		\end{equation*}
		
	\end{assumption}
	We now also bound $\Phi$ from below:
	\begin{assumption}
		\label{ass:Phi_lower_growth_bound}
		Fix $l$ as given in Assumption \ref{ass:phi_cont_dphi_lipschitz}. Then for any $\epsilon > 0$ there exists a constant $K = K(\epsilon)$ such that for any $q \in \mathcal{H}^l$
		\begin{equation*}
			\Phi(q) \ge K  - \epsilon \norm{q}_l^2
		\end{equation*}
	\end{assumption}
	This ensures that $\exp(-\Phi(q))$ is integrable w.r.t. $\pi_0$ (due to Fernique's Theorem, see \citet{hairer_spde}) and that the perturbation we make to the Gaussian measure $\pi_0$ is relatively small.
	
	With these assumptions we can state the following Lemma, which is proven as Lemma 4.1 in \citet{beskos2011hybrid}.
	\begin{lemma}
		\label{lemma:inequalities_DPhi}
		There exists a constant $K_{C}$ such that for all $q, q', v, v' \in \mathcal{H}^l$
		\begin{enumerate}[(i)]
			\item $C\Dif\Phi(q) \in \mathcal{H}^l$
			\item $\norm{C\Dif\Phi(q) - C\Dif\Phi(q')}_l \le K_{C}\norm{q - q'}_l$
			\item $|\Dif\Phi(q)(v)| \le K_{C} (1 + \norm{q}_l)\norm{v}_l$
			\item $|\Dif\Phi(q)(v) - \Dif\Phi(q')(v')| \le K_{C}(\norm{v}_l\norm{q - q'}_l + (1 + \norm{q'}_l )\norm{v - v'}_l)$
			\item $\norm{C^{1/2} \Dif \Phi(q)} \le K_{C}(1 + \norm{q}_l)$
			\item $\norm{C^{1/2}\Dif\Phi(q) - C^{1/2}\Dif\Phi(q')} \le K_{C} \norm{q - q'}_l$
		\end{enumerate}
	\end{lemma}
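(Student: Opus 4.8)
The plan is to reduce all six estimates to two elementary ingredients: first, that $C$ and $C^{1/2}$ act boundedly on the Sobolev-like scale — precisely, $C\colon\mathcal{H}^{-l}\to\mathcal{H}^{l}$ and $C^{1/2}\colon\mathcal{H}^{-l}\to\mathcal{H}^{0}$ are bounded; second, that $\Dif\Phi$ grows at most linearly in the $\mathcal{H}^{-l}$-norm. Both follow directly from the standing assumptions, and combining them with the duality pairing $\abs{\Dif\Phi(q)(v)}\le\norm{\Dif\Phi(q)}_{-l}\norm{v}_{l}$ (Cauchy--Schwarz for the extension of $\sprod{\cdot}{\cdot}_0$ to $\mathcal{H}^{-l}\times\mathcal{H}^{l}$) and the triangle inequality yields (i)--(vi), with $K_C$ taken as the maximum of the finitely many constants produced.

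For the operator bounds I would work in the eigenbasis, where $C^{1/2}\phi_j=\lambda_j\phi_j$ and $\norm{x}_s^2=\sum_j j^{2s}x_j^2$ for $x=\sum_j x_j\phi_j$. The $\limsup$ half of Assumption~\ref{ass:std_deviations_decay} gives a constant $\bar K$ with $\lambda_j\le\bar K j^{-\kappa}$ for all $j$; since $l<\kappa-\tfrac12<\kappa$ and $j\ge 1$, this implies $\lambda_j\le\bar K j^{-l}$, hence $j^{2l}\lambda_j^4\le\bar K^4 j^{-2l}$ and $\lambda_j^2\le\bar K^2 j^{-2l}$ componentwise. Therefore, for every $y=\sum_j y_j\phi_j\in\mathcal{H}^{-l}$,
\[
\norm{Cy}_l^2=\sum_j j^{2l}\lambda_j^4 y_j^2\le\bar K^4\sum_j j^{-2l}y_j^2=\bar K^4\norm{y}_{-l}^2,\qquad \norm{C^{1/2}y}^2=\sum_j\lambda_j^2 y_j^2\le\bar K^2\norm{y}_{-l}^2 .
\]
Since $\Dif\Phi(q)\in\mathcal{H}^{-l}$ by Assumption~\ref{ass:phi_cont_dphi_lipschitz}, the first estimate shows in particular that $C\Dif\Phi(q)\in\mathcal{H}^{l}$, which is (i). (Note that only $l\le\kappa$ and the upper decay bound are needed here; the $\liminf$ part of Assumption~\ref{ass:std_deviations_decay} and the sharper condition $l<\kappa-\tfrac12$ play no role in this lemma.)

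For the growth bound, Assumption~\ref{ass:phi_cont_dphi_lipschitz} gives $\norm{\Dif\Phi(q)}_{-l}\le\norm{\Dif\Phi(0)}_{-l}+L\norm{q}_l\le a(1+\norm{q}_l)$ with $a:=\max\{\norm{\Dif\Phi(0)}_{-l},L\}<\infty$. Then (iii) is the duality estimate $\abs{\Dif\Phi(q)(v)}\le\norm{\Dif\Phi(q)}_{-l}\norm{v}_l\le a(1+\norm{q}_l)\norm{v}_l$; (v) follows by composing the $C^{1/2}$-operator bound with this growth bound; (ii) and (vi) follow by applying the $C$- resp. $C^{1/2}$-operator bound to $\Dif\Phi(q)-\Dif\Phi(q')$ together with the Lipschitz estimate $\norm{\Dif\Phi(q)-\Dif\Phi(q')}_{-l}\le L\norm{q-q'}_l$; and (iv) follows from the splitting
\[
\Dif\Phi(q)(v)-\Dif\Phi(q')(v')=\bigl(\Dif\Phi(q)-\Dif\Phi(q')\bigr)(v)+\Dif\Phi(q')(v-v') ,
\]
bounding the first term by $L\norm{q-q'}_l\norm{v}_l$ (Cauchy--Schwarz and the Lipschitz estimate) and the second by $a(1+\norm{q'}_l)\norm{v-v'}_l$ via (iii).

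The only step with genuine content is the pair of operator bounds on the Sobolev-like scale — matching the polynomial decay rate $\kappa$ of the eigenvalues with the regularity index $l$ so that $C$ and $C^{1/2}$ land in the spaces claimed; this is where Assumption~\ref{ass:std_deviations_decay} enters, and once it is in hand the estimates are componentwise and elementary, everything else being Cauchy--Schwarz and triangle-inequality bookkeeping. The one subtlety worth tracking is that $\norm{\Dif\Phi(0)}_{-l}$ is finite, which holds precisely because Assumption~\ref{ass:phi_cont_dphi_lipschitz} asserts $\Dif\Phi$ maps $\mathcal{H}^{l}$ into $\mathcal{H}^{-l}$.
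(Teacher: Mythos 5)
Your proposal is correct; note that the paper does not prove this lemma itself but cites Lemma 4.1 of \citet{beskos2011hybrid}, whose argument is the same standard one you give: the eigenbasis bounds $\lambda_j\lesssim j^{-\kappa}\le j^{-l}$ make $C\colon\mathcal{H}^{-l}\to\mathcal{H}^{l}$ and $C^{1/2}\colon\mathcal{H}^{-l}\to\mathcal{H}$ bounded, and everything else is the duality pairing plus the Lipschitz and linear-growth bounds on $\Dif\Phi$. Your observation that only the $\limsup$ half of Assumption \ref{ass:std_deviations_decay} and $l\le\kappa$ are needed here is accurate (the $\liminf$ and the sharper $l<\kappa-\tfrac12$ are used elsewhere, to ensure $\pi_0(\mathcal{H}^l)=1$), and the one cosmetic point is that the $\limsup$ only controls $\lambda_j j^{\kappa}$ for large $j$, so the constant $\bar K$ must also absorb the finitely many remaining terms.
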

	Item $(i)$ shows us that we can treat $C\Dif\Phi(q)$ as an element of $\mathcal{H}^l$ opposed to $\Dif\Phi(q)$ which is only an element of $\mathcal{H}^{-l}$. The other bounds will also be used often throughout the paper. 
	
	\subsection{Preconditioned Hamiltonian Monte Carlo Algorithm}
	\subsubsection{Preconditioning the Hamiltonian System}
	We will now introduce the algorithm to sample $\pi$ which is given through
	\begin{equation}
		\label{equ:definition_pi_pHMC}
		\dod{\pi}{\pi_0}(q) \propto \exp(-\Phi(q)).
	\end{equation}
	Heuristically, assuming we could write $\pi_0$ as a density w.r.t. a Lebesgue measure as in finite dimensions we would get
	\begin{equation*}
		\dif \pi ``\propto" \exp(-\Phi(q)-\sprod{q}{C^{-1}q})\dif q.
	\end{equation*}
	In the usual derivation of HMC in finite dimensions one would now define the Hamiltonian
	\begin{equation*}
		H(q, p) = \Phi(q) + \frac{1}{2} \sprod{q}{C^{-1}q} + \frac{1}{2} \sprod{p}{M^{-1}p}
	\end{equation*}
	and a measure
	\begin{equation*}
		\dif \Pi \propto \exp(-H(q, p)) \dif q \dif p.
	\end{equation*}
	which has $\pi$ as its $q$-marginal. Here we introduced the auxiliary momentum variable $p$. We view $q$ and $p$ as the position and momentum variables respectively in a physical system with energy $H(q,p)$. A new proposal is generated by first sampling a new momentum $p \sim \mathcal{N}(0, M)$ and then using a numerical integrator (e.g. velocity Verlet) to integrate the Hamiltonian system corresponding the the \emph{Hamiltonian} $H$:
	\begin{equation}
		\label{equ:hamiltonian_dynamics_infinite_dims}
		\begin{cases}
			\dod{q(t)}{t} &= \pd{H}{p} = M^{-1}p(t) \\
			\dod{p(t)}{t} &= -\pd{H}{q} = -C^{-1}q(t) -\Dif \Phi(q(t)),
		\end{cases}
	\end{equation}
	which preserves the energy $H$, therefore $\Pi$, and therefore $\pi$.  
	
	The choice of the mass matrix $M$ is up to the user and can be chosen to make the system easier to solve. In a Hilbert space settings there are two immediate problems with (\ref{equ:hamiltonian_dynamics_infinite_dims}). First of all, since $C$ is the covariance of a Gaussian measure, it is trace class (see \citet{hairer_spde}) and therefore its inverse $C^{-1}$ will be an unbounded operator. Furthermore, $\Dif\Phi(q)$ is an element of $\mathcal{H}^{-l}$ and not necessarily of $\mathcal{H}^{l}$, the space which $q$ and $p$ will be elements of.
	
	In pHMC $M$ is chosen to be $L := C^{-1}$, so that $M^{-1} = C$ cancels out the effect of the unbounded operator $C^{-1}$ and \emph{preconditions} the system. We also work with the velocity variable $v = M^{-1}p = Cp$ instead of the momentum variable $p$. 
	

	The system now becomes
	\begin{equation}
		\label{equ:hamiltonian_dynamics_velocity_infinite_dims}
		\begin{cases}
			\dod{q(t)}{t} &= v(t) \\
			\dod{v(t)}{t} &= -q(t) - C\Dif \Phi(q(t)).
		\end{cases}
	\end{equation}
	We call the flow of this system $\varphi_t$. Note that it cannot be computed explicitly in general due to the appearance of the non-linear term $C\Dif\Phi$.
	Due to \assref{ass:std_deviations_decay} and \assref{ass:phi_cont_dphi_lipschitz} one can verify that $C\Dif\Phi$ is a well defined map from $\mathcal{H}^l$ to $\mathcal{H}^l$ (see $(i)$ of \lemref{lemma:inequalities_DPhi}). Both $q$ and $v$ will be elements of $\mathcal{H}^l$ and there are no unbounded operators appearing in the ODEs. This system now preserves the measure
	\begin{equation}
		\label{equ:Pi}
		\dif \Pi(q, v) \propto \exp(-\Phi(q)) \dif \Pi_0(q, v), \text{ with }\Pi_0(q, v) = \pi_0(q) \otimes \pi_0(v),
	\end{equation}
	which has $\pi$ as its $q$-marginal. 
	
	Formally, using the non-existent Lebesgue-measure in infinite dimensions, the above measure could again be written as
	\begin{equation}
		\label{equ:formal_lebesge_density_infinite_dimensions}
		\dif\Pi(q, v) ``\propto" \exp(-H(q, v)) \dif q \dif v,
	\end{equation}
	with the Hamiltonian
	\begin{equation}
		\label{equ:hamiltonian_infinite_dimensions}
		H(q, v) = \Phi(q) + \frac{1}{2}\sprod{q}{Lq} + \frac{1}{2}\sprod{v}{Lv}.
	\end{equation}
	Since $H(q, v)$ is $\pi_0(q) \otimes \pi_0(v)$-almost surely infinite\footnote{$\sprod{\cdot}{L\cdot}$ is the Cameron-Martin norm which is a.s. infinite, see \citet{hairer_spde}.}. (\ref{equ:Pi}) is the correct definition of $\Pi$, but we will use the formal equations (\ref{equ:formal_lebesge_density_infinite_dimensions}) and (\ref{equ:hamiltonian_infinite_dimensions}) to derive formulas for the acceptance probability.
	
	For implementations of the pHMC algorithm on a computer we need to use a numerical integrator. For that purpose we split (\ref{equ:hamiltonian_dynamics_velocity_infinite_dims}) into two systems,
	\begin{align}
		\label{equ:splitting_hmc_ds_infinitedim2}
		(A) \quad 
		\begin{cases}
			\dod{q(t)}{t} &= v(t) \\
			\dod{p(t)}{t} &= -q(t)
		\end{cases}
	\end{align}
	with flow $\varphi^{(A)}_t$ and
	\begin{align}
		\label{equ:splitting_hmc_ds_infinitedim1}
		(B) \quad 
		\begin{cases}
			\dod{q(t)}{t} &= 0 \\
			\dod{v(t)}{t} &= -C\Dif \Phi(q(t))
		\end{cases}
	\end{align}
	with flow $\varphi^{(B)}_t$. Note that $\varphi^{(A)}$ and $\varphi^{(B)}$ can both be explicitly computed. To approximate the flow of (\ref{equ:hamiltonian_dynamics_velocity_infinite_dims}), we now use the numerical flow
	\begin{equation}
		\label{equ:numerical_integrator_pHMC}
		\psi_t = \varphi^{(B)}_{t/2} \circ \varphi^{(A)}_t \circ \varphi^{(B)}_{t/2}.
	\end{equation}
	The specific form of building $\psi_t$ from $\varphi^{(A)}$ and $\varphi^{(B)}$ in (\ref{equ:numerical_integrator_pHMC}) is called Strang's splitting. For information on how to design numerical integrators for Hamiltonian Monte Carlo, the reader is referred to \citet{bou_rabee_geometric_integration_17} or \citet{leimkuhler_reich_simulation_hd_18}. In finite dimension one splits (\ref{equ:hamiltonian_dynamics_velocity_infinite_dims}) into different systems $(A)$ and $(B)$. One then again uses the same formula (\ref{equ:numerical_integrator_pHMC}) to build $\psi_t$ from $\varphi^{(A)}$ and $\varphi^{(B)}$. The resulting integrator is called velocity Verlet or Leapfrog. In infinite dimensions velocity Verlet would result in a acceptance probability of zero. The integrator we presented here is designed to interact well with infinite dimensional Gaussian measures\footnote{(\ref{equ:splitting_hmc_ds_infinitedim2}) preserves the prior measure $\pi_0(q) \otimes \pi_0(v)$ exactly and (\ref{equ:splitting_hmc_ds_infinitedim1}) has a non-zero acceptance probability since $C\Dif \Phi(q)$ is a element of the Cameron-Martin space of $\pi_0(v)$, see \citet{hairer_spde}.}. 
	
	Instead of making one step of length $t$ with the integrator (\ref{equ:numerical_integrator_pHMC}), one normally makes multiple smaller steps of length $h$. We define
	\begin{equation*}
		\psi^{(T)}_h = \psi_h^{T/h},
	\end{equation*}
	where we assume that the \emph{trajectory length} $T$ is a multiple of the \emph{step size} $h$. $\psi^{(T)}_h$ is the $T/h$-fold concatenation of a step of size $h$.
	\subsubsection{Acceptance Probability}

	The final ingredient for a MCMC algorithm is the acceptance probability. Normally, for HMC it is defined as
	\begin{equation}
		\label{equ:acceptance_probability}
		\alpha(q, v) = \min\{1, \exp(-\Delta H(q, v))\},
	\end{equation}
	where
	\begin{equation*}
		\label{equ:DH}
		\Delta H(q, p) = H(\psi^{(T)}_h(q, v)) - H(q, v).
	\end{equation*}
	is there error in the Hamiltonian we make when using the numerical approximation. If $\psi$ would solve (\ref{equ:hamiltonian_dynamics_velocity_infinite_dims}) exactly, $\Delta H(q, p)$ would be zero. In infinite dimensions, both quantities on the r.h.s. of (\ref{equ:DH}) will be $\pi_0(q) \otimes \pi_0(v)$-almost surely infinite. Therefore we cannot define $\Delta H(q, v)$ in this way. We define it as 
	\begin{equation}
		\label{equ:DeltaH_pHMC}
		\begin{aligned}
			\Delta H(q_0, v_0) =~& \Phi(q_I) - \Phi(q_0) + \frac{h^2}{8}\left(\norm{C^{1/2}\Dif\Phi(q_0)}^2 - \norm{C^{1/2}\Dif\Phi(q_I)}^2\right) \\ - &h\sum_{i=1}^{I-1} \sprod{\Dif \Phi(q_i)}{v_i} - \frac{h}{2}\left(\sprod{\Dif \Phi(q_0)}{v_0} +  \sprod{\Dif \Phi(q_I)}{v_I}\right)
		\end{aligned}
	\end{equation}
	where $(q_i, v_i) = \psi^{i}_h(q_0, v_0)$ and $I = \frac{T}{h}$ which we assume to be a natural number. Note that if we let $h\to0$ formally, we get
	\begin{equation}
		\label{equ:formal_limit_DH}
		\Delta H(q_0, v_0) \xrightarrow[h \to 0]{} \Phi(q_I) - \Phi(q_0) - \int_0^T \Dif\Phi(q_s)v_s \dif s
	\end{equation}
	which is $0$ since physically the gain in potential energy $\Phi(q_I) - \Phi(q_0)$ is equal to the applied force. It can be seen that (\ref{equ:DeltaH_pHMC}) is finite (\S 3.4 of \citet{beskos2011hybrid}).
	
	\begin{remark}[Justification for \ref{equ:DeltaH_pHMC}] 
		\label{rem:justification_DeltaH}
		We will now make a calculation, justifying the formula (\ref{equ:DeltaH_pHMC}). The calculation is rigorous in the finite dimensional case and formal in the infinite dimensional case. We define $(q', v') = \psi_h(q, v)$ as the successors of $(q,v)$ after applying the numerical flow $\psi_h$ (see (\ref{equ:numerical_integrator_pHMC})). Using the definition of $\psi$, $\varphi^{(A)}$, and $\varphi^{(B)}$ we see that
		\begin{align*}
			q' &= \cos(h)q + \sin(h)v  - \frac{h}{2} \sin(h) C \Dif \Phi(q)\\
			v' &= -\sin(h)q + \cos(h)v - \frac{h}{2} \cos(h) C \Dif\Phi(q) - \frac{h}{2} C \Dif \Phi(q').
		\end{align*}
		We have that $H(q', v') = \Psi(q') + \frac{1}{2}\sprod{q'}{Lq'} + \frac{1}{2}\sprod{v'}{Lv'}$. So we start calculating the latter two terms,
		\begin{align*}
			\sprod{q'}{Lq'} =& \cos(h)^2\sprod{q}{Lq} + \sin(h)^2 \sprod{v}{Lv} + \frac{h^2}{4}\sin(h)^2\sprod{\Dif\Psi(q)}{C\Dif\Psi(q)} \\
			+& 2\cos(h)\sin(h)\sprod{q}{Lv} - h\cos(h)\sin(h)\sprod{q}{\Dif\Phi(q)} \\
			-& h\sin(h)^2\sprod{v}{\Dif\Phi(q)},
		\end{align*}
		and
		\begin{align*}
			\sprod{v'}{Lv'} =& \sin(h)^2\sprod{q}{Lq} + \cos(h)^2 \sprod{v}{Lv} + \frac{h^2}{4}\cos(h)^2\sprod{\Dif\Phi(q)}{C\Dif\Phi(q)} \\
			+& \frac{h^2}{4}\sprod{\Dif\Phi(q')}{C\Dif\Phi(q')} - 2\sin(h)\cos(h)\sprod{q}{Lv} \\
			+& h\sin(h)\cos(h)\sprod{q}{\Dif\Phi(q)} + h\sin(h)\sprod{q}{\Dif\Phi(q')} \\
			-& h\cos(h)^2\sprod{v}{\Dif\Phi(q)} - h\cos(h)\sprod{v}{\Dif\Phi(q')} \\
			+& \frac{h^2}{2}\cos(h)\sprod{\Dif\Phi(q)}{C\Dif\Phi(q')}.
		\end{align*}
		Adding these together gives us
		\begin{align*}
			&\sprod{q'}{Lq'} + \sprod{v'}{Lv'} \\
			=~& \sprod{q}{Lq} + \sprod{v}{Lv} + \frac{h^2}{4} (\sprod{\Dif\Phi(q)}{C\Dif\Phi(q)} + \sprod{\Dif\Phi(q')}{C\Dif\Phi(q')})\\
			&- h \sprod{v}{\Dif\Phi(q)} + h\sin(h)\sprod{q}{\Dif\Phi(q')}  - h \cos(h)\sprod{v}{\Dif\Phi(q')} \\
			&+ \frac{h^2}{2}\cos(h)\sprod{\Dif\Phi(q)}{C\Dif\Phi(q')}.
		\end{align*}
		Now we use the formula for $v'$ and a zero-addition of $\frac{h^2}{2}\sprod{C\Dif\Phi(q')}{\Dif\Phi(q')}$ to see that
		\begin{align*}
			&h\sin(h)\sprod{q}{\Dif\Phi(q')}  - h \cos(h)\sprod{v}{\Dif\Phi(q')} 
			+ \frac{h^2}{2}\cos(h)\sprod{\Dif\Phi(q)}{C\Dif\Phi(q')}\\
			=~& -h\sprod{v'}{\Dif\Phi(q')}-\frac{h^2}{2}\sprod{C\Dif\Phi(q')}{\Dif\Phi(q')} ,
		\end{align*}
		and therefore, 
		\begin{align*}
			&\sprod{q'}{Lq'} + \sprod{v'}{Lv'} \\
			=~& \sprod{q}{Lq} + \sprod{v}{Lv} + \frac{h^2}{4} (\sprod{\Dif\Phi(q)}{C\Dif\Phi(q)} - \sprod{\Dif\Phi(q')}{C\Dif\Phi(q')})\\
			&- h (\sprod{v}{\Dif\Phi(q)} + \sprod{v'}{\Dif\Phi(q')}).
		\end{align*}
		We can now use the definition of $H(q,v)$ and $H(q', v')$ to get the final result:
		\begin{align*}
			&H(q', v') - H(q, v)\\
			=~& \Phi(q') - \Phi(q) + \frac{h^2}{8} (\sprod{\Dif\Phi(q)}{C\Dif\Phi(q)} - \sprod{\Dif\Phi(q')}{C\Dif\Phi(q')})\\
			&- \frac{h}{2} (\sprod{v}{\Dif\Phi(q)} + \sprod{v'}{\Dif\Phi(q')}).
		\end{align*}	
		Using this formula iteratively, we get get (\ref{equ:DeltaH_pHMC}). 
		
		This was a formal calculation, since we let the infinities $\sprod{\cdot}{L\cdot}$ cancel each other out. Carrying out the same calculation in finite dimensions is rigorous and one can then see that the energy differences of the finite dimensional approximations converge to $\Delta H$ as defined in (\ref{equ:DeltaH_pHMC}), see Lemma 4.3 of \citet{beskos2011hybrid}.
	\end{remark}

	\begin{algorithm}
		\caption{Exact pHMC Algorithm}
		\label{alg:hs_exact_hmc}
		\begin{algorithmic}[1]
			\REQUIRE $q_0,p_0\in\mathcal{H}^l$, path length $T>0$, number of samples $N$, prior covariance $C$, potential $\Phi$
			\FOR {$n\in\{0,\dots,N-1\}$}
			\STATE Sample $\tilde{v}_n \sim \mathcal{N}(0, C)$
			\STATE Set $(q_{n+1}, v_{n+1}) = \varphi_T(q_n, \tilde{v}_n)$ with $\varphi_T$ defined through (\ref{equ:hamiltonian_dynamics_velocity_infinite_dims})
			\ENDFOR
			\RETURN Samples $(q_{i})_{i=0}^N$
		\end{algorithmic}
	\end{algorithm}
	\begin{algorithm}
		\caption{Adjusted pHMC Algorithm}
		\label{alg:hs_hmc}
		\begin{algorithmic}[1]
			\REQUIRE $q_0,p_0\in\mathcal{H}^l$, path length $T>0$, step size $h>0$, number of samples $N$, prior covariance $C$, potential $\Phi$
			\FOR {$n\in\{0,\dots,N-1\}$}
			\STATE Sample $\tilde{v}_n \sim \mathcal{N}(0, C)$
			\STATE Set proposal state $(q_{n+1}^*, v_{n+1}^*) = \psi^{(T)}_h(q_n, \tilde{v}_n)$ with $\psi^{(T)}_h = \psi^{\lfloor T/h \rfloor}_h$ and $\psi_h$ defined through (\ref{equ:splitting_hmc_ds_infinitedim1}), (\ref{equ:splitting_hmc_ds_infinitedim2}), (\ref{equ:numerical_integrator_pHMC})
			\STATE Calculate $\alpha(q_n, v_n) = 1\wedge \exp(-\Delta H(q_n, v_n)) \in \intcc{0, 1}$ with $\Delta H$ as in (\ref{equ:DeltaH_pHMC})
			\STATE Set $(q_{n+1}, v_{n+1}) = \gamma(q_n^*, v_n^*) + (1 - \gamma)(q_{n+1}, v_{n+1})$ where $\gamma \sim \mbox{Bernoulli}(\alpha(q_n, v_n))$
			\ENDFOR
			\RETURN Samples $(q_{i})_{i=0}^N$
		\end{algorithmic}
	\end{algorithm}
	\subsection{Statement of the Algorithms}
	
	We are now ready to state the exact and adjusted pHMC algorithms, Algorithm \ref{alg:hs_exact_hmc} and Algorithm \ref{alg:hs_hmc} respectively. 
	
	Since $\varphi$ exactly preserves $H$ there is no need for an acceptance-rejection step in exact pHMC, since the acceptance probability will always be one.
	
	\section{Convergence of pHMC}
	\label{sec:conv_pHMC}
	In this section we will treat the convergence of pHMC to its stationary distribution $\pi$. We will use a coupling technique to get an explicit rate of the convergence of pHMC to its stationary distribution. These results were proven for standard finite-dimensional HMC in \citet{bou2018coupling} and are now also proven for exact pHMC in \citet{bou2020two} without assuming convexity of the potential.
	
	We start by discretizing the space to get a finite dimensional space $\mathcal{H}_N$ in \secref{sec:proofs_pHMC}. We then prove the results from \citet{bou2018coupling} for the integrator (\ref{equ:numerical_integrator_pHMC}) instead of velocity Verlet. The results will not depend on the embedding dimension and we therefore can take the limit in the end. The results in \citet{bou2018coupling} do depend explicitly on the dimension because the sampled velocity is i.i.d. Gaussian. Expectations including the velocity therefore depend on the dimension. In our case we will sample $v^N$ such that expectations including $v^N$ can be upper bounded by expectations of $v \sim \mathcal{N}(0, C)$, which are dimensionless. We will start in \secref{sec:fdd_approx} by introducing a spectral approximation technique which we will use later on. In \secref{sec:ass_conv} we will state the additional assumptions we need to make to get the convergence results. With these we can then state the precise results and the coupling technique in \secref{sec:main_results_contraction}. In \secref{sec:proofs_contraction_pHMC} we then prove the results.

	\subsection{Finite Dimensional Approximations.}
	\label{sec:proofs_pHMC}
	\label{sec:fdd_approx}
	We discretize $\mathcal{H}$ to an $N$-dimensional space $\mathcal{H}_N$ using a spectral technique and will prove our results in finite dimensions. The infinite dimensional case will be recovered as a limit.
	
	We define 
	\begin{equation*}
		\mathcal{H}_N = \left\{q \in \mathcal{H}: q = \sum_{j=1}^N q_j \phi_j, ~ q_j \in \bb{R}\right\},
	\end{equation*}
	where $\phi_j$ are the eigenfunctions of $C$ as in \secref{sec:sobolev_like_spaces}. We denote by $\mbox{proj}_{\mathcal{H}_N}$ the projection of $\mathcal{H}$ onto $\mathcal{H}_N$. We will also denote the projection of $\mathcal{H}^l$ for $l \not= 0$ onto $\mathcal{H}_N$ by $\mbox{proj}_{\mathcal{H}_N}$ in cases where this does not cause confusion. We will treat $\mathcal{H}_N$ as $\bb{R}^N$ where the basis we choose for the isomorphism is $\{\phi_i\}_{i=1}^N$, i.e. when we speak of an operator in terms of a matrix, then we mean the matrix representation with respect to $\{\phi_i\}_{i=1}^N$. We denote by $q^N$ the projection of $q$ to $\mathcal{H}_N$ and similarly for $v^N$, i.e.
	\begin{equation*}
		q^N = \mbox{proj}_{\mathcal{H}_N} q, ~ v^N = \mbox{proj}_{\mathcal{H}_N} v.
	\end{equation*}
	We see that the restriction of $C$ to $\mathcal{H}_N$, $C_N$ takes the simple form
	\begin{equation*}
		C_N = \mbox{diag}\{\lambda_1^2, \ldots, \lambda_N^2\}.
	\end{equation*}
	Using that we define the distributions $\pi_{0, N}$ and $\pi_N$ on $\mathcal{H}_N \approx \bb{R}^N$ as
	\begin{equation*}
		\pi_{0, N} = \mathcal{N}(0, C_N), \qquad \dif\pi_N \propto \exp(-\Phi_N(q))\dif\pi_{0, N}(q).
	\end{equation*}
	$\pi_{0, N}$ is equal to the image measure of $\pi_0$ under $\mbox{proj}_{\mathcal{H}_N}$. $\Phi_N$ is defined as the restriction of $\Phi$ to $\mathcal{H}_N$. When viewing $\Dif\Phi$ as an element of the dual space ${\mathcal{H}^l}^* = \mathcal{H}^{-l}$ then $D\Phi_N = \mbox{proj}_{\mathcal{H}_N}D\Phi$. We then consider the system
	\begin{equation}
		\label{equ:fdd_dynamics_pHMC}
		\begin{cases}
			\dod{q}{t} &= v\\
			\dod{v}{t} &= -q - C_N\Dif\Phi_N(q) = -q - C\mbox{proj}_{\mathcal{H}_N}\Dif\Phi(q)
		\end{cases},
	\end{equation}
	and denote its flow by $\varphi_{t, N}$. We define $\psi_{h,N}$ analogous to $\psi_h$, but replace every occurrence of $C$ by $C_N$ and every occurrence of $\Dif\Phi$ by $\Dif\Phi_N = \mbox{proj}_{\mathcal{H}_N}\Dif\Phi$. Since $C_N$ is just the restriction of $C$ to $\mathcal{H}_N$ and $C$ keeps $\mathcal{H}_N$ invariant, we will often just use $C$ instead of $C_N$ when applying it to elements of $\mathcal{H}_N$. For $q, v \in \mathcal{H}_N$, the dynamics (\ref{equ:fdd_dynamics_pHMC}) are associated to the Hamiltonian 
	\begin{equation*}
		H_N(q, v) = \Phi_N(q) + \frac{1}{2}\sprod{q}{C_N^{-1}q} + \frac{1}{2}\sprod{v}{C_N^{-1}v},
	\end{equation*}
	which leads to the acceptance probability
	\begin{equation}
		\alpha_N(q, v) = \min\{1, \exp(-\Delta H_N(q, v))\}
	\end{equation}
	where 
	\begin{equation*}
		\Delta H_N(q, v) = H_N(\psi^{(T)}_{h, N}(q, v)) - H_N(q, v),
	\end{equation*} 
	The acceptance probability can also be rewritten as (\ref{equ:DeltaH_pHMC}) as we have seen in \remref{rem:justification_DeltaH}.
	The following propositions now establishes some bounds for the discrete dynamics that are independent of $N$ and convergence estimates such that we can use limiting arguments later on.
	\begin{proposition}
		\label{prop:exact_dynamics_fd_converge}
		The following holds.
		\begin{enumerate}[(i)]
			\item For any $T>0$ there exists a unique solution of (\ref{equ:fdd_dynamics_pHMC}) in the space $C^1([-T, T], \mathcal{H}_N \times \mathcal{H}_N)$.
			\item Let $\varphi_{T, N}$ denote the group flow of(\ref{equ:fdd_dynamics_pHMC}). Then $\varphi_{T, N}$ is globally Lipschitz with respect to the norm induced by $\mathcal{H}^l \times \mathcal{H}^l$ with a Lipschitz constant of the form $\exp(KT)$, where $K$ is independent of $N$ and only depends on $C$ and $\Phi$.
			\item For each $T > 0$ there exists a $C(T) > 0$ independent of $N$ such that for $0 \le t \le T$ and $q^0, v^0 \in \mathcal{H}^l$, if we set
			\begin{equation*}
				({q^N(t)},{v^N(t)}) = \varphi_{t, N}(\mbox{\normalfont{proj}}_{\mathcal{H}_N}(q^0), \mbox{\normalfont{proj}}_{\mathcal{H}_N}(v^0)),
			\end{equation*}
			then
			\begin{align*}
				\norm{q^N(t)}_l + \norm{v^N(t)}_l &\le C(T)(1 + \norm{q^N(0)}_l + \norm{v^N(0)}_l) \\
				&\le C(T)(1 + \norm{q(0)}_l + \norm{v(0)}_l).
			\end{align*}
			Moreover,
			\begin{equation}
				\label{equ:conv_exact_dynamics}
				\begin{aligned}
					\norm{q^N(t) - q(t)}_l + \norm{v^N(t) - v(t)}_l \xrightarrow[N \to \infty]{} 0
				\end{aligned}
			\end{equation}
			and, for any $s \in (l, \kappa - \frac{1}{2})$,
			\begin{equation}
				\label{equ:conv_bound_exact_dynamics}
				\begin{aligned}
					&\norm{q^N(t) - q(t)}_l + \norm{v^N(t) - v(t)}_l \\
					\le& C(T)\left(\frac{1}{N^{s- l}}(\norm{q^0}_s + \norm{v^0}_s) + \frac{1}{N}(1 + \norm{q^0}_l + \norm{v^0}_l)\right),
				\end{aligned}
			\end{equation}
			where $(q(t), v(t)) = \varphi_t(q^0, v^0)$.
		\end{enumerate}
	\end{proposition}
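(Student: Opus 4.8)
The plan is to reduce the whole statement to the uniform Lipschitz bound for the (truncated) nonlinearity $C\Dif\Phi$ supplied by Lemma~\ref{lemma:inequalities_DPhi}, together with elementary Grönwall arguments. Write $P:=\mathrm{proj}_{\mathcal H_N}$, $Q:=I-P$; since $\mathcal H_N$ is spanned by eigenvectors of $C$, both $P,Q$ commute with $C$, are contractions on every $\mathcal H^{s'}$, and $C_N\Dif\Phi_N(q)=PC\Dif\Phi(q)$ (the right-hand side of \eqref{equ:fdd_dynamics_pHMC}). Hence for $q,q'\in\mathcal H_N$, Lemma~\ref{lemma:inequalities_DPhi} gives $\norm{C_N\Dif\Phi_N(q)-C_N\Dif\Phi_N(q')}_l\le\norm{C\Dif\Phi(q)-C\Dif\Phi(q')}_l\le K_C\norm{q-q'}_l$, so the vector field $(q,v)\mapsto(v,-q-C_N\Dif\Phi_N(q))$ is globally Lipschitz on $\mathcal H_N\times\mathcal H_N$ with a constant depending only on $K_C$, hence only on $C,\Phi$ and not on $N$. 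On a finite-dimensional space Picard–Lindelöf then produces a unique solution on all of $\mathbb R$ (in particular on $[-T,T]$), which is $C^1$ because the right-hand side is continuous in $t$; this is (i). For (ii) I would take two solutions and apply Grönwall to $e(t):=\norm{q(t)-\tilde q(t)}_l+\norm{v(t)-\tilde v(t)}_l$, using $\frac{d}{dt}\norm{q-\tilde q}_l\le\norm{v-\tilde v}_l$ and $\frac{d}{dt}\norm{v-\tilde v}_l\le(1+K_C)\norm{q-\tilde q}_l$, to get $e(t)\le e^{(2+K_C)\abs{t}}e(0)$, i.e.\ the $\exp(KT)$ bound with $K=2+K_C$.

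For the growth bound in (iii) I would run the same argument with the affine Grönwall inequality, using $\norm{C_N\Dif\Phi_N(q)}_l\le\norm{C\Dif\Phi(0)}_l+K_C\norm{q}_l$: then $m(t):=\norm{q^N(t)}_l+\norm{v^N(t)}_l$ satisfies $m'\le(1+K_C)m+\norm{C\Dif\Phi(0)}_l$, so $m(t)\le C(T)(1+m(0))$ with $C(T)$ depending only on $T,K_C,\norm{C\Dif\Phi(0)}_l$; since $\norm{q^N(0)}_l=\norm{Pq^0}_l\le\norm{q^0}_l$ (and likewise for $v^0$), both inequalities follow. The identical estimate in the Banach space $\mathcal H^l\times\mathcal H^l$ also gives global well-posedness of the full flow $\varphi_t$ and the bound $\sup_{[0,T]}(\norm{q(t)}_l+\norm{v(t)}_l)\le C(T)(1+\norm{q^0}_l+\norm{v^0}_l)$, which I use below.

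For the convergence \eqref{equ:conv_exact_dynamics} I would compare $(q^N,v^N)$ not with $(q,v)$ directly but with the projected true solution $(Pq,Pv)$. Setting $a^N:=q^N-Pq$, $b^N:=v^N-Pv$ (which vanish at $t=0$), one has $\dot a^N=b^N$ and $\dot b^N=-a^N-(C_N\Dif\Phi_N(q^N)-C_N\Dif\Phi_N(q))$, whose last term satisfies $\norm{\cdot}_l\le K_C\norm{q^N-q}_l\le K_C(\norm{a^N}_l+\norm{Qq}_l)$. Hence $e_N(t):=\norm{a^N(t)}_l+\norm{b^N(t)}_l$ obeys $e_N'\le(1+K_C)e_N+K_C\,r_N$ with $r_N(t):=\norm{Qq(t)}_l$ and $e_N(0)=0$, so $e_N(t)\le K_Ce^{(1+K_C)T}\int_0^T r_N$. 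For each fixed $t$, $r_N(t)\to0$ as $N\to\infty$ (it is the tail in $\mathcal H^l$ of $q(t)\in\mathcal H^l$) and $r_N(t)\le\norm{q(t)}_l\le C(T)(1+\norm{q^0}_l+\norm{v^0}_l)$ is an integrable majorant on $[0,T]$, so dominated convergence gives $\int_0^T r_N\to0$ and $e_N\to0$ uniformly on $[0,T]$; combined with $\norm{q^N(t)-q(t)}_l\le\norm{a^N(t)}_l+\norm{Qq(t)}_l$ (and the analogue for $v$) this yields \eqref{equ:conv_exact_dynamics}.

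The quantitative estimate \eqref{equ:conv_bound_exact_dynamics} is the only step requiring a new idea, and I expect it to be the main obstacle: one must control $r_N(t)=\norm{Qq(t)}_l$ (and its analogue for $v$) at the asserted two-speed rate. I would use Duhamel's formula around the linear rotation $R_t$ generated by $(\dot q,\dot v)=(v,-q)$, which is an isometry of $\mathcal H^{s'}\times\mathcal H^{s'}$ commuting with $Q$:
\[
(q(t),v(t))=R_t(q^0,v^0)+\int_0^t R_{t-\sigma}\bigl(0,\,-C\Dif\Phi(q(\sigma))\bigr)\,d\sigma.
\]
The homogeneous term contributes $\norm{Qq^0}_l+\norm{Qv^0}_l$, which by the elementary tail bound $\norm{Qx}_l\le N^{\,l-s}\norm{x}_s$ for $s>l$ (split $j^{2l}=j^{2l-2s}j^{2s}$ and use $j>N$) is $\le N^{-(s-l)}(\norm{q^0}_s+\norm{v^0}_s)$. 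For the Duhamel term, Assumption~\ref{ass:std_deviations_decay} makes $C$ bounded from $\mathcal H^{-l}$ into $\mathcal H^{2\kappa-l}$ (its symbol is $j^{2(2\kappa-l)}\lambda_j^4=(j^\kappa\lambda_j)^4 j^{-2l}\lesssim j^{-2l}$), so Assumption~\ref{ass:phi_cont_dphi_lipschitz} gives $\norm{C\Dif\Phi(q(\sigma))}_{2\kappa-l}\lesssim 1+\norm{q(\sigma)}_l\le C(T)(1+\norm{q^0}_l+\norm{v^0}_l)$; applying the tail bound with exponent $2\kappa-l$ and using $2\kappa-2l\ge1$ (i.e.\ $\kappa\ge l+\tfrac12$, so $N^{2l-2\kappa}\le N^{-1}$) bounds the Duhamel contribution to $r_N(t)$ by $C(T)N^{-1}(1+\norm{q^0}_l+\norm{v^0}_l)$. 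Thus $\sup_{[0,T]}r_N$, and likewise $\norm{Qq(t)}_l+\norm{Qv(t)}_l$, are $\le C(T)\bigl(N^{-(s-l)}(\norm{q^0}_s+\norm{v^0}_s)+N^{-1}(1+\norm{q^0}_l+\norm{v^0}_l)\bigr)$; substituting into $e_N(t)\le K_Ce^{(1+K_C)T}T\sup_{[0,T]}r_N$ and $\norm{q^N(t)-q(t)}_l+\norm{v^N(t)-v(t)}_l\le e_N(t)+\norm{Qq(t)}_l+\norm{Qv(t)}_l$ yields \eqref{equ:conv_bound_exact_dynamics}. The conceptual content is that the nonlinearity $C\Dif\Phi$ is automatically $2\kappa-2l\ge1$ orders smoother than $\mathcal H^l$, so only the free rotation of the initial datum obstructs the full truncation rate.
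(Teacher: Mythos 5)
The paper does not actually prove Proposition~\ref{prop:exact_dynamics_fd_converge}: it is stated and then deferred wholesale to \citet{beskos2011hybrid} (``All three of these Propositions are proven in \citet{beskos2011hybrid}''). Your proposal therefore cannot be compared line-by-line with an in-paper argument, but as a self-contained proof it is correct, and it reconstructs the same circle of ideas that the cited reference uses: the dimension-free Lipschitz constant $K_C$ of $C\Dif\Phi$ from \lemref{lemma:inequalities_DPhi} (which survives projection because $\mathrm{proj}_{\mathcal H_N}$ commutes with $C$ and contracts every $\norm{\cdot}_{s'}$), Picard--Lindel\"of and Gr\"onwall for (i)--(iii), comparison of $(q^N,v^N)$ with the \emph{projected} true solution so that the error is driven by the tail $\norm{Qq(t)}_l$, and finally the Duhamel/rotation decomposition in which the free evolution of $(Qq^0,Qv^0)$ produces the $N^{-(s-l)}$ term while the smoothing of the nonlinearity ($C$ maps $\mathcal H^{-l}$ into $\mathcal H^{2\kappa-l}$, and $2\kappa-2l>1$ precisely because \assref{ass:phi_cont_dphi_lipschitz} requires $l<\kappa-\tfrac12$) produces the $N^{-1}$ term. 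That last observation is the genuinely nontrivial step and you have it right. Two cosmetic remarks: writing $C_N\Dif\Phi_N(q)$ for $q\notin\mathcal H_N$ is an abuse of notation --- the clean statement is $\dot b^N=-a^N-P\bigl(C\Dif\Phi(q^N)-C\Dif\Phi(q)\bigr)$, which gives the same bound; and the Gr\"onwall differential inequalities for norms should be read as upper Dini derivatives (standard, but worth saying). Neither affects correctness.
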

	We now state a similar proposition for the numerical integrator.
	\begin{proposition}
		\label{prop:discrete_dynamics_inequalities}
		\begin{enumerate}[(i)]
			\item $\psi^{(T)}_{h, N}$ is globally Lipschitz map on $\mathcal{H}^l_N \times \mathcal{H}_N^l$ with a Lipschitz constant of the form $\exp(KT)$, where $K$ is independent of $N$ and depends only on $\pi_0$ and $\Phi$.
			\item For each $T > 0$ there exists a $C(T) > 0$ independent of $N$ such that for $0 \le i \le \lfloor T/h \rfloor$ and $q^0, v^0 \in \mathcal{H}^l$, if we set
			\begin{equation}
				\label{equ:definition_qv_Ni}
				(q^{N, i}, v^{N, i}) = \psi^i_{h, N}(\mbox{\normalfont{proj}}_{\mathcal{H}_N}q^0, \mbox{\normalfont{proj}}_{\mathcal{H}_N}v^0),
			\end{equation}
			then
			\begin{equation}
				\label{equ:growth_bound_discretized}
				\begin{aligned}
					\norm{q^{N, i}}_l + \norm{v^{N, i}}_l &\le C(T)\left(1 + \norm{q^{N, 0}}_l + \norm{v^{N, 0}}_l\right) \\
					&\le C(T)(1 + \norm{q^0}_l + \norm{v^0}_l)
				\end{aligned}.
			\end{equation}
			Moreover
			\begin{equation}
				\label{equ:fdd_qv_vanish}
				\begin{aligned}
					&\norm{q^{N, i} - q^i}_l + \norm{v^{N, i} - v^i}_l \xrightarrow[N \to \infty]{} 0
				\end{aligned}
			\end{equation}		
			and, for any $s \in \intoo{l, \kappa - \frac{1}{2}}$,
			\begin{equation}
				\label{equ:fdd_qv_converge}
				\begin{aligned}
					&\norm{q^{N, i} - q^i}_l + \norm{v^{N, i} - v^i}_l \\
					\le& C(T)\left(\frac{1}{N^{s-l}} (\norm{q^0}_s + \norm{v^0}_s) + \frac{1}{N} (1 + \norm{q^0}_l + \norm{v^0}_l) \right)
				\end{aligned}
			\end{equation}
		\end{enumerate}
	\end{proposition}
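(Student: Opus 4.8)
The plan is to follow the proof of Proposition~\ref{prop:exact_dynamics_fd_converge} almost verbatim, replacing the Grönwall and integral-equation arguments for the exact flow $\varphi_{t,N}$ by one-step estimates for the \emph{explicit} maps that constitute $\psi_{h,N}$; if anything this is simpler, since no existence theory is required. Throughout I would work on $\mathcal H^l\times\mathcal H^l$ with the product norm $\norm{(q,v)}_{l,l}:=\bigl(\norm q_l^2+\norm v_l^2\bigr)^{1/2}$, and lean on two elementary facts. First, in every coordinate block $(\sprod{q}{\phi_j},\sprod{v}{\phi_j})$ the flow $\varphi^{(A)}_h$ acts as the planar rotation by angle $h$; since the $q$- and $v$-directions carry the same weight $j^{2l}$, a one-line computation shows that $\varphi^{(A)}_h$ is an \emph{isometry} of $(\mathcal H^l\times\mathcal H^l,\norm{\cdot}_{l,l})$. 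Second, $\varphi^{(B)}_t(q,v)=(q,\,v-t\,C_N\Dif\Phi_N(q))$ and $C_N\Dif\Phi_N=\mathrm{proj}_{\mathcal H_N}\circ C\Dif\Phi$, so, because $\mathrm{proj}_{\mathcal H_N}$ is norm-nonincreasing on every $\mathcal H^s$, all estimates of Lemma~\ref{lemma:inequalities_DPhi} hold for $C_N\Dif\Phi_N$ with the \emph{same} constant $K_C$; this is exactly what forces the constants below to be independent of $N$.

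For part (i) and the growth bound in (ii), I would first note that $\varphi^{(B)}_{h/2}$ is Lipschitz with constant $1+\tfrac h2K_C$ on $(\mathcal H^l\times\mathcal H^l,\norm{\cdot}_{l,l})$: by Lemma~\ref{lemma:inequalities_DPhi}(ii) and $2ab\le a^2+b^2$,
\[ \norm{q-\tilde q}_l^2+\bigl(\norm{v-\tilde v}_l+\tfrac h2K_C\norm{q-\tilde q}_l\bigr)^2\le\bigl(1+\tfrac h2K_C\bigr)^2\norm{(q,v)-(\tilde q,\tilde v)}_{l,l}^2 . \]
Composing the three sub-steps, $\psi_{h,N}$ is Lipschitz with constant $\le(1+\tfrac h2K_C)^2\le e^{hK_C}$, hence $\psi^{(T)}_{h,N}=\psi_{h,N}^{\lfloor T/h\rfloor}$ is Lipschitz with constant $\le e^{hK_C\lfloor T/h\rfloor}\le e^{K_CT}$, proving (i). For the growth bound I would use the one-step formulas from Remark~\ref{rem:justification_DeltaH} (with $C,\Dif\Phi$ replaced by $C_N,\Dif\Phi_N$): writing one step as $\varphi^{(A)}_h$ plus an error term of size $\le h\bigl(\norm{C_N\Dif\Phi_N(q^i)}_l+\norm{C_N\Dif\Phi_N(q^{i+1})}_l\bigr)$, bounding $\norm{C_N\Dif\Phi_N(q)}_l\le K_C\norm q_l+\norm{C\Dif\Phi(0)}_l$, and using Cauchy--Schwarz to control $\norm{q^{i+1}}_l$ by $\norm{(q^i,v^i)}_{l,l}$, one obtains for $h\le 1$ the one-step inequality $1+\norm{(q^{i+1},v^{i+1})}_{l,l}\le(1+ch)\bigl(1+\norm{(q^i,v^i)}_{l,l}\bigr)$ with $c$ depending only on $C$ and $\Phi$. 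Iterating $\lfloor T/h\rfloor\le T/h$ times yields \eqref{equ:growth_bound_discretized} with $C(T)$ of order $e^{cT}$, and the second inequality there follows from $\norm{\mathrm{proj}_{\mathcal H_N}q^0}_l\le\norm{q^0}_l$.

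For the convergence estimates I would put $(q^i,v^i)=\psi_h^i(q^0,v^0)\in\mathcal H^l\times\mathcal H^l$ (well defined by Lemma~\ref{lemma:inequalities_DPhi}(i)), set $\epsilon_i:=\norm{(q^{N,i}-q^i,\,v^{N,i}-v^i)}_{l,l}$ (equivalent up to a factor $\sqrt2$ to the quantity in \eqref{equ:fdd_qv_vanish}), and use the triangle inequality together with the Lipschitz bound $e^{hK_C}$ for $\psi_h$ on $\mathcal H^l\times\mathcal H^l$ to get $\epsilon_{i+1}\le e^{hK_C}\epsilon_i+\norm{\psi_{h,N}(q^{N,i},v^{N,i})-\psi_h(q^{N,i},v^{N,i})}_{l,l}$. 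The remaining task is to bound the \emph{local consistency error} $\norm{\psi_{h,N}(q,v)-\psi_h(q,v)}_{l,l}$ for $(q,v)\in\mathcal H_N\times\mathcal H_N$, to be applied at $(q,v)=(q^{N,i},v^{N,i})$ whose norm is controlled by \eqref{equ:growth_bound_discretized}. Tracking the three sub-steps, the rotation $\varphi^{(A)}_h$ is an exact isometry and only transports the error, while the two $\varphi^{(B)}_{h/2}$ passes each contribute a term $\tfrac h2\norm{(I-\mathrm{proj}_{\mathcal H_N})C\Dif\Phi(w)}_l$ with $w$ of norm $\lesssim 1+\norm q_l+\norm v_l$. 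The decisive point is that $C$ maps $\mathcal H^{-l}$ boundedly into $\mathcal H^{2\kappa-l}$ (as $\lambda_j\lesssim j^{-\kappa}$ by Assumption~\ref{ass:std_deviations_decay}) and $2(\kappa-l)>1$ since $l<\kappa-\tfrac12$, so that
\[ \norm{(I-\mathrm{proj}_{\mathcal H_N})C\Dif\Phi(w)}_l\le N^{-2(\kappa-l)}\norm{C\Dif\Phi(w)}_{2\kappa-l}\le N^{-1}c'\bigl(1+\norm w_l\bigr), \]
using $\norm{\Dif\Phi(w)}_{-l}\le L\norm w_l+\norm{\Dif\Phi(0)}_{-l}$. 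Combined with \eqref{equ:growth_bound_discretized} this gives the recursion $\epsilon_{i+1}\le e^{hK_C}\epsilon_i+c''C(T)\,h\,N^{-1}\bigl(1+\norm{q^0}_l+\norm{v^0}_l\bigr)$, and discrete Grönwall — using $e^{hK_C}-1\ge hK_C$ to cancel the $h$ against the $\lfloor T/h\rfloor$ accumulated terms — yields $\epsilon_{\lfloor T/h\rfloor}\le e^{K_CT}\epsilon_0+C'(T)N^{-1}\bigl(1+\norm{q^0}_l+\norm{v^0}_l\bigr)$. Since $\epsilon_0$ is the $\mathcal H^l$-tail of $(q^0,v^0)$, it vanishes as $N\to\infty$ for $q^0,v^0\in\mathcal H^l$, giving \eqref{equ:fdd_qv_vanish}, and is bounded by $N^{-(s-l)}(\norm{q^0}_s+\norm{v^0}_s)$ for $q^0,v^0\in\mathcal H^s$ with $s\in(l,\kappa-\tfrac12)$, giving \eqref{equ:fdd_qv_converge} after taking $C(T):=\max\{e^{K_CT},C'(T)\}$.

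The step I expect to be most delicate is the local consistency error. One must ensure (a) that it carries a factor $h$, so that the $\lfloor T/h\rfloor$ contributions accumulated in the Grönwall step add up to $O(N^{-1})$ rather than $O(h^{-1}N^{-1})$ — this is automatic here because the discrepancy between $\psi_{h,N}$ and $\psi_h$ lives entirely inside the $\tfrac h2\,C_N\Dif\Phi_N$ increments — and (b) that one extracts precisely the rate $N^{-1}$, which is where the smoothing $C\colon\mathcal H^{-l}\to\mathcal H^{2\kappa-l}$ and the strict inequality $2(\kappa-l)>1$ are used. Beyond this the proof is routine bookkeeping, and uniformity in $N$ is guaranteed throughout by the identity $C_N\Dif\Phi_N=\mathrm{proj}_{\mathcal H_N}C\Dif\Phi$ together with the contractivity of $\mathrm{proj}_{\mathcal H_N}$ on each $\mathcal H^s$.
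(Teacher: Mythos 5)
Your proposal is correct. Note that the paper does not prove this proposition itself but defers to \citet{beskos2011hybrid}; your self-contained argument follows essentially the same route as that reference: exploiting that $\varphi^{(A)}_h$ is an isometry of $\mathcal{H}^l\times\mathcal{H}^l$, that $C_N\Dif\Phi_N=\mathrm{proj}_{\mathcal{H}_N}C\Dif\Phi$ makes all Lipschitz and growth constants uniform in $N$, and that the smoothing $C:\mathcal{H}^{-l}\to\mathcal{H}^{2\kappa-l}$ with $2(\kappa-l)>1$ yields the $N^{-1}$ rate for the accumulated consistency error while the initial projection error supplies the $N^{-(s-l)}$ term. You correctly isolate the two delicate points (the factor $h$ in the local error and the extraction of the $N^{-1}$ rate), and the discrete Gr\"onwall bookkeeping is sound.
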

	Lastly we will need that the acceptance probabilities also converge to their infinite dimensional counterpart:
	\begin{lemma}
		\label{lemma:convergence_acceptance_prob}
		As $N \to \infty$
		\begin{equation*}
			\alpha_N(q^N, v^N) \to \alpha(q, v)
		\end{equation*}
		for every $(q, v)\in \mathcal{H}^l \times \mathcal{H}^l$.
	\end{lemma}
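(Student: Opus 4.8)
The plan is to reduce the claim to the convergence of the energy errors. Since $\alpha_N = \min\{1, \exp(-\Delta H_N)\}$ and $\alpha = \min\{1, \exp(-\Delta H)\}$ and $x \mapsto \min\{1, e^{-x}\}$ is continuous on $\mathbb{R}$, it suffices to show $\Delta H_N(q^N, v^N) \to \Delta H(q, v)$. For this I would work directly with the explicit formula (\ref{equ:DeltaH_pHMC}), comparing the finite-dimensional iterates $(q^{N,i}, v^{N,i}) = \psi^i_{h,N}(\mathrm{proj}_{\mathcal{H}_N} q, \mathrm{proj}_{\mathcal{H}_N} v)$ appearing in $\Delta H_N$ with the infinite-dimensional iterates $(q^i, v^i) = \psi^i_h(q,v)$ appearing in $\Delta H$, and treat the finitely many ($I+1$, with $I = T/h$) terms one at a time.

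The two ingredients are Proposition \ref{prop:discrete_dynamics_inequalities} --- which provides both the pointwise convergence $\|q^{N,i} - q^i\|_l + \|v^{N,i} - v^i\|_l \to 0$ of the iterates and the $N$-uniform growth bound (\ref{equ:growth_bound_discretized}) --- and the regularity of $\Phi$, namely its continuity on $\mathcal{H}^l$ (Assumption \ref{ass:phi_cont_dphi_lipschitz}) together with items (iv) and (vi) of Lemma \ref{lemma:inequalities_DPhi}. Term by term: since $\Phi_N$ is the restriction of $\Phi$ and $q^{N,i} \in \mathcal{H}_N$, continuity gives $\Phi_N(q^{N,i}) = \Phi(q^{N,i}) \to \Phi(q^i)$, which handles the $\Phi(q_I) - \Phi(q_0)$ contribution. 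For a pairing term, $\langle \Dif\Phi_N(q^{N,i}), v^{N,i}\rangle$ equals the duality pairing $\Dif\Phi(q^{N,i})(v^{N,i})$ because $v^{N,i} \in \mathcal{H}_N$ and $\Dif\Phi_N = \mathrm{proj}_{\mathcal{H}_N}\Dif\Phi$; Lemma \ref{lemma:inequalities_DPhi}(iv), applied with $\|v^{N,i}\|_l$ and $1 + \|q^i\|_l$ bounded uniformly in $N$ via (\ref{equ:growth_bound_discretized}), then gives $\Dif\Phi(q^{N,i})(v^{N,i}) \to \Dif\Phi(q^i)(v^i)$. For the quadratic terms I would use that $C^{1/2}$ commutes with $\mathrm{proj}_{\mathcal{H}_N}$ (both act diagonally in the eigenbasis $\{\phi_j\}$), so that $C^{1/2}\Dif\Phi_N(q^{N,i}) = \mathrm{proj}_{\mathcal{H}_N}\bigl(C^{1/2}\Dif\Phi(q^{N,i})\bigr)$; Lemma \ref{lemma:inequalities_DPhi}(vi) gives $C^{1/2}\Dif\Phi(q^{N,i}) \to C^{1/2}\Dif\Phi(q^i)$ in $\mathcal{H}$, and since $\mathrm{proj}_{\mathcal{H}_N} x_N \to x$ whenever $x_N \to x$ in $\mathcal{H}$, continuity of the norm yields $\|C^{1/2}\Dif\Phi_N(q^{N,i})\|^2 \to \|C^{1/2}\Dif\Phi(q^i)\|^2$. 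Adding up the finitely many terms gives $\Delta H_N(q^N,v^N) \to \Delta H(q,v)$, and the lemma follows.

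I do not expect a genuine obstacle here: this is a routine limiting argument once Proposition \ref{prop:discrete_dynamics_inequalities} (and, for the exact case, Proposition \ref{prop:exact_dynamics_fd_converge}) is available. The only thing requiring a little care is the bookkeeping around the duality pairing and the projections --- remembering that $\Dif\Phi$ lives only in $\mathcal{H}^{-l}$, so every occurrence must be paired against an $\mathcal{H}^l$-element, and that $\mathrm{proj}_{\mathcal{H}_N}$ commutes with $C$ and $C^{1/2}$ --- and making sure that the $N$-uniform growth bounds are invoked to control the non-vanishing factors (such as $\|v^{N,i}\|_l$ and $1 + \|q^i\|_l$) that multiply the vanishing differences in the Lipschitz estimates of Lemma \ref{lemma:inequalities_DPhi}.
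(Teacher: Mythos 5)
Your argument is correct. The paper does not give its own proof of this lemma --- it is deferred, together with Propositions \ref{prop:exact_dynamics_fd_converge} and \ref{prop:discrete_dynamics_inequalities}, to \citet{beskos2011hybrid} --- but your reduction to $\Delta H_N(q^N,v^N)\to\Delta H(q,v)$ followed by a term-by-term passage to the limit in the explicit formula (\ref{equ:DeltaH_pHMC}), using the pointwise convergence and $N$-uniform growth bounds of Proposition \ref{prop:discrete_dynamics_inequalities} together with items (iv) and (vi) of Lemma \ref{lemma:inequalities_DPhi}, is exactly the standard argument that the cited reference employs, and your bookkeeping with the duality pairing and with the commutation of $\mbox{proj}_{\mathcal{H}_N}$ and $C^{1/2}$ is handled correctly.
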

	All three of these Propositions are proven in \citet{beskos2011hybrid}.
	
	\subsection{Assumptions and Notation}
	\label{sec:ass_conv}
	\subsubsection{Regularity of $\Phi$ and Integration Length}
	The first assumption we make is very similar to \assref{ass:phi_cont_dphi_lipschitz} and actually just a convenient reformulation. 
	\begin{assumption}
		\label{ass:DPhiBoundL}
		In \assref{ass:phi_cont_dphi_lipschitz} we defined $L$ such that
		\begin{equation*}
			\norm{\Dif\Phi(q) - \Dif\Phi(q')}_{-l} \le L \norm{q - q'}_l.
		\end{equation*}
		We still assume that the above holds but now we assume that $L$ is also big enough to fulfil
		\begin{equation}
			\label{equ:DPhiDifferencebound}
			\norm{C\Dif\Phi(q) - C\Dif\Phi(q')}_l \le L\norm{q - q'}_l,
		\end{equation}
		i.e.
		\begin{equation*}
			\sprod{C\Dif\Phi(q)}{h}_l \le L\norm{q - q'}_l\norm{h}_l.
		\end{equation*}
		We define $L'$ such that
		\begin{equation*}
			\norm{\Dif\Phi(q)}_{-l}\le \norm{\Dif\Phi(q) - \Dif\Phi(0)}_{-l} + \norm{\Dif\Phi(0)}_{-l} \le L\norm{q}_l + L'
		\end{equation*}
		and again assume that the same holds for $\norm{C\Dif\Phi(q)}_l$, i.e. 
		\begin{align}
			\label{equ:DPhiGrowthBound}
			\norm{C\Dif\Phi(q)}_l \le L\norm{q}_l + L'
		\end{align}
	\end{assumption}
	\begin{remark}
		By \lemref{lemma:inequalities_DPhi} we know that \assref{ass:phi_cont_dphi_lipschitz} implies (\ref{equ:DPhiDifferencebound}). But the constant in general could differ from $L$ by a factor. So all that we did is assume that $L$ is large enough to fulfil both requirements. 
	\end{remark}
	We also assume something reminiscent of $\sprod{q}{Lq} + \Phi(q)$ being a convex function. But since we are using the preconditioned dynamics the statement is not exactly equivalent to any convexity assumption.
	
	
	\begin{assumption}
		\label{ass:PhiConvex}
		There is a $\zeta > 0$ such that for all $x, y \in \mathcal{H}^l$
		\label{ass:convexity}
		\begin{equation*}
			\sprod{x - y}{x-y}_l + \sprod{C\Dif\Phi(x) - C\Dif\Phi(y)}{x - y}_l \ge \zeta\norm{x - y}_l^2 
		\end{equation*}
		and $\zeta$, $L$ are such that $\zeta \le L + 1$.
	\end{assumption}
	This assumption can for example be fulfilled when $\Dif\Phi$ is a Lipschitz map with sufficiently small Lipschitz constant. 
	\begin{remark}
		For $\zeta < 1$ a sufficient condition for \assref{ass:PhiConvex} is that 
		\begin{equation*}
			\norm{C\Dif\Phi(x) - C\Dif\Phi(y)}_l \le (1 - \zeta)\norm{x-y}_l.
		\end{equation*}
		Therefore $L \le 1 - \zeta$ in \assref{ass:DPhiBoundL} would directly fulfil both conditions of \assref{ass:PhiConvex}.
	\end{remark}
	We also need to make some assumptions on the second derivative of $\Phi$. We therefore need to define the operator norm.
	\begin{definition}[Operator Norm]
		Let $(V, \norm{\cdot}_V)$ and $(W, \norm{\cdot}_W)$ be two normed spaces.
		We define the operator norm of a map linear $F: V \to W$ as
		\begin{equation*}
			\norm{F}_{V \to W} = \sup_{v \in V} \frac{\norm{Fv}_W}{\norm{v}_V}.
		\end{equation*}
	\end{definition}
	We can now state the assumption on the second derivative.
	\begin{assumption}
		\label{ass:DPhi2BoundM}
		We assume that $\Phi: \mathcal{H}^l \to \bb{R}$ is in $C^4$ and all derivatives are bounded:
		
		The second derivative evaluated at $x \in \mathcal{H}^l$, $\Dif^2\Phi(x)$, is identified with a linear map from $\mathcal{H}^s$ to $\mathcal{H}^{-s}$. We assume that
		\begin{equation}
			\label{equ:lipsch_statement_for_D2}
			\norm{D^2\Phi(x) - D^2\Phi(y)}_{l \to -l} \le M\norm{x - y}_l,
		\end{equation}
		i.e.
		\begin{equation*}
			\norm{\Dif^3\Phi(q)} \le M
		\end{equation*}
		or
		\begin{equation*}
			\norm{\partial_{h_1} \partial_{h_2}\Phi(x) - \partial_{h_1} \partial_{h_2}\Phi(y)}_{l \to -l} \le M\norm{x - y}_l\norm{h_1}_l\norm{h_2}_l.
		\end{equation*}
		We again assume that the same constant $M$ also fulfils
		\begin{equation}
			\label{equ:lipsch_statement_for_CD2}
			\norm{CD^2\Phi(x) - CD^2\Phi(y)}_{l \to l} \le M\norm{x - y}_l.
		\end{equation}
		Finally we assume that
		\begin{equation*}
			\norm{\Dif^4\Phi(q)} \le N.
		\end{equation*}
	\end{assumption}
	\begin{remark}
		Again by \lemref{lemma:inequalities_DPhi} we know that a statement similar to (\ref{equ:lipsch_statement_for_CD2}) is implied by (\ref{equ:lipsch_statement_for_D2}). Therefore all we assume by (\ref{equ:lipsch_statement_for_CD2}) is that the constant is the same.
	\end{remark}
	\begin{remark}
		Bounds that now depend explicitly on $L, M$ or $L'$ depend implicitly on $C$ due to \assref{ass:DPhiBoundL} and \assref{ass:DPhi2BoundM}. This does not cause any problems since in our setting the bounds that can depend on $L, M$ or $L'$ also are allowed to depend on $C$. For the proofs we will discretize the space in the following and work on $\bb{R}^N$. Therefore a subtlety we have to watch out for is that the bounds do not depend on the dimension $N$ of the discretized space; since neither $L, M, L'$ nor $C$ do, we cannot introduce dimension dependence through the backdoor by depending on them.
	\end{remark}
	The next two assumptions are a bit more technical. We will run the dynamics for time $T$. We need to make a restriction on that time and also on the step size $h$ of the discrete dynamics.
	\begin{assumption}
		\label{ass:Ltht1}
		We assume that
		\begin{equation}
			\label{equ:LTh_1_cond}
			(T^2 + L(T^2 + 2hT)) \le 1.
		\end{equation}	
	\end{assumption}
	
	\begin{assumption}
		\label{ass:LthtzetaL}
		We assume that
		\begin{equation}
			\label{equ:LTh_1_cond_gamma_L}
			(T^2 + L(T^2 + 2hT)) \le \frac{\zeta}{1 + L}.
		\end{equation}	
	\end{assumption}
	\begin{remark}
		Together with \assref{ass:PhiConvex}, \assref{ass:LthtzetaL} implies \assref{ass:Ltht1}, since we assume that $\zeta \le L + 1$.
	\end{remark}
	
	\subsubsection{Continuous Interpolation}
	
	
	We denote $(q^k, v^k) := \psi_h^k(q^0, v^0)$.
	If we fully write out one step of pHMC we get
	\begin{align*}
		q^{k+1} &= \cos(h)q^k + \sin(h)v^k  - \frac{h}{2} \sin(h) C \Dif \Phi(q^k)\\
		v^{k+1} &= -\sin(h)q^k + \cos(h)v^k - \frac{h}{2} \cos(h) C \Dif\Phi(q^k) - \frac{h}{2} C \Dif \Phi(q^{k+1})
	\end{align*}
	(see \remref{rem:justification_DeltaH}), where $h \in \intcc{0, t}$ is the stepsize. In the following lemmas we will interpolate between the samples to get a continuous path. That will allow us to  use arguments involving differential equations. The first thing that comes into mind is a linear interpolation, i.e.
	\begin{equation}
		\begin{aligned}
			q_t &= q_{\lfloor t \rfloor} + \frac{\bt{t}}{h}\left(({\cos(h) - 1}) q_{\lfloor t \rfloor } + \sin(h)v_{\lfloor t \rfloor} - \frac{h}{2}\sin(h)C\Dif\Phi(q_{\lfloor t \rfloor})\right) \\
			v_t &= v_{\lfloor t \rfloor} \\
			&+\frac{\bt{t}}{h}\left(
			-\sin(h)q_{\lfloor t \rfloor} + (\cos(h) - 1)v_{\lfloor t \rfloor} - \frac{h}{2} \cos(h)C \Dif\Phi(q_{\lfloor t \rfloor}) - \frac{h}{2} C \Dif \Phi(q_{\lceil t \rceil})
			\right)
		\end{aligned}
	\end{equation}
	where $\bt{t} = t - \floor{t}$, $\lfloor t \rfloor = \max\{sh | s \in \bb{Z}, sh \le t\}$, and $\lceil t \rceil = \min\{sh | s \in \bb{Z}, sh \ge t\}$. Note that $q_{kh} = q^k$ and similarly for $v$. This leads to the differential equations
	\begin{equation}
		\label{equ:ode_discr_dynamics_linear}
		\begin{cases}
			\dod{q_t}{t} &= \frac{1}{h}\left((\cos(h) - 1) q_{\lfloor t \rfloor } + \sin(h)v_{\lfloor t \rfloor} - \frac{h}{2}\sin(h)C\Dif\Phi(q_{\lfloor t \rfloor})\right) \\
			\dod{v_t}{t} &= \frac{1}{h}\left(-\sin(h)q_{\lfloor t \rfloor} + (\cos(h)-1)v_{\lfloor t \rfloor} - \frac{h}{2} \cos(h)C \Dif\Phi(q_{\lfloor t \rfloor}) - \frac{h}{2} C \Dif \Phi(q_{\lceil t \rceil})\right).
		\end{cases}
	\end{equation}
	Due to the linearity, the right hand side of these differential equations is constant on $\intcc{\floor{t}, \ceil{t}}$ and therefore easy to bound. 
	Another way to interpolate is to use the stepsize $h$ as time variable, i.e.
	\begin{equation}
		\label{equ:ode_discr_interpolation_nonlinear}
		\begin{aligned}
			q_t &= \cos(\bt{t}) q_{\lfloor t \rfloor } + \sin(\bt{t})v_{\lfloor t \rfloor} - \frac{\bt{t}}{2}\sin(\bt{t})C\Dif\Phi(q_{\lfloor t \rfloor})\\
			v_t &= -\sin(\bt{t})q_{\lfloor t \rfloor} + \cos(\bt{t})v_{\lfloor t \rfloor} 
			- \frac{\bt{t}}{2} \cos(\bt{t}) C \Dif\Phi(q_{\lfloor t \rfloor}) - \frac{\bt{t}}{2} C \Dif \Phi(q_{\lceil t \rceil}),
		\end{aligned}
	\end{equation}
	leading to the differential equations
	\begin{equation}
		\label{equ:ode_discr_dynamics_nonlinear}
		\begin{cases}
			\dod{q_t}{t} &= 
			v_t 
			- \frac{\sin(\bt{t})}{2} C\Dif\Phi(q_{\floor{t}})
			+ \frac{\bt{t}}{2}C \Dif \Phi(q_{\ceil{t}}) \\
			\dod{v_t}{t} &= -q_t - \frac{\cos(\bt{t})}{2}C\Dif\Phi(q_{\lfloor t \rfloor}) - \frac{1}{2}C\Dif\Phi(q_\ceil{t}).
		\end{cases}
	\end{equation}
	The differential equation resemble closely the exact equations (\ref{equ:hamiltonian_dynamics_velocity_infinite_dims}). Another important difference is that $\od{v_t}{t}$ does not depend on $v_t$, so that there is the possibility to eliminate $v_t$ and write it as a second order equation. That is not possible for the differential equations which we obtain from the linear interpolation. We will use the second, non-linear interpolation instead of the linear one. \citet{bou2018coupling} uses the linear interpolation to make the same estimates for the velocity Verlet integrator. In their case however, the linear interpolation fulfils a much nicer equation than (\ref{equ:ode_discr_dynamics_linear}) and one can also eliminate $v$ in the linear case. 
	
	We will allow the stepsize of $h$ to be $0$. When we choose $h=0$ we are using the exact dynamics $\varphi_t$ instead of the approximation $\psi^{(t)}_h$ together with a continuous interpolation. Note that for $h=0$ the differential equations fulfilled by the interpolation, (\ref{equ:ode_discr_dynamics_nonlinear}), indeed reduce to the exact dynamics since $\bt{t} = t - \floor{t} = 0$. 
	
	\subsection{Generalized Reversibility of adjusted pHMC}
	Here we prove that the adjusted pHMC algorithm on $\mathcal{H}^l$ satisfies generalized reversibility. For this purpose, we introduce the velocity sign flip
	\begin{equation*}
		S(q, v) = (q, -v).
	\end{equation*}
	We use $x$ and $y$ to denote elements of $\mathcal{H}^l \times \mathcal{H}^l$, i.e. $x$ and $y$ each consist of the position and velocity coordinate.
	The transition kernel of adjusted pHMC on $\mathcal{H}^l \times \mathcal{H}^l$ is given by
	\begin{equation*}
		p(x, \dif y) = \delta_{\psi(x)}(\dif y)\alpha(x) + \delta_{x}(\dif y) \, (1 - \alpha(x)),
	\end{equation*}
	where $\psi$ could either stand for the exact flow $\varphi$ or the numerical approximation $\psi^{(t)}_h$. We introduce the velocity flipped transition 
	\begin{equation*}
		p^S(x, \dif y) = p(S(x), S(\dif y)) = \delta_{\psi \circ S(x) }(S(\dif y))\alpha(S(x)) + \delta_{S(x)}(\dif y)(1 - \alpha(S(x))).
	\end{equation*}
	We say that pHMC fulfils the generalized \emph{generalized reversibility} condition with respect to $\Pi$ and $S$ if $(p\Pi)(x, y) = (p^S\Pi)(y, x)$, i.e.
	\begin{equation}
		\label{equ:generalized_reversibility}
		\Pi(\dif \, (q, v))p((q, v), \dif \, (q', v')) = \Pi(\dif \, (q', v'))p((q', -v'), \dif \, (q, -v)).
	\end{equation}
	The Markov Chain produced by pHMC has this property:
	\begin{proposition}[Generalized Reversibility]
		Let $p$ be the transition kernel of adjusted pHMC. Then it fulfils the generalized reversibility condition with respect to $\Pi$ and $S$
	\end{proposition}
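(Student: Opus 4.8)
The plan is to reduce to the finite-dimensional spaces $\mathcal{H}_N$ of \S\ref{sec:fdd_approx} and there verify (\ref{equ:generalized_reversibility}) by a change of variables built from two structural symmetries of the numerical flow. On $\mathcal{H}_N$ the measure $\Pi_N$ has Lebesgue density proportional to $\exp(-H_N)$, and testing (\ref{equ:generalized_reversibility}) against a bounded measurable $F$ shows it is equivalent to the statement that $\mu_N(\dif x, \dif y) := \Pi_N(\dif x)\, p_N(x, \dif y)$ on $(\mathcal{H}_N \times \mathcal{H}_N)^2$ is invariant under the involution $\sigma(x,y) := (S y, S x)$; the only input here is that $\Pi_N$ is $S$-invariant, which is immediate from $H_N \circ S = H_N$. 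Writing $p_N(x,\dif y) = \alpha_N(x)\, \delta_{\psi^{(T)}_{h,N}(x)}(\dif y) + (1-\alpha_N(x))\, \delta_{S(x)}(\dif y)$ (the velocity sign flip on a rejection does not affect the position marginal of the chain but is what is needed for (\ref{equ:generalized_reversibility})), I split $\mu_N = \mu_N^{\mathrm{acc}} + \mu_N^{\mathrm{rej}}$ accordingly. Since $\sigma$ carries each of the graphs $\{y = \psi^{(T)}_{h,N}(x)\}$ and $\{y = S(x)\}$ onto itself, it suffices to show $\sigma_\#\mu_N^{\mathrm{acc}} = \mu_N^{\mathrm{acc}}$ and $\sigma_\#\mu_N^{\mathrm{rej}} = \mu_N^{\mathrm{rej}}$.

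Before that I would record the two properties of $\psi := \psi^{(T)}_{h,N}$ that do the work. First, \emph{volume preservation}: each $\varphi^{(A)}_{t,N}$ is a rotation and each $\varphi^{(B)}_{t,N}$ is the shear $(q,v) \mapsto (q, v - t\, C_N \Dif\Phi_N(q))$, both with unit Jacobian, so the Strang composition (\ref{equ:numerical_integrator_pHMC}) and all its iterates are volume preserving. Second, \emph{reversibility}: a direct computation from (\ref{equ:splitting_hmc_ds_infinitedim2}) and (\ref{equ:splitting_hmc_ds_infinitedim1}) gives $S \varphi^{(A)}_t S = \varphi^{(A)}_{-t}$ and $S \varphi^{(B)}_t S = \varphi^{(B)}_{-t}$, and since (\ref{equ:numerical_integrator_pHMC}) is a palindrome this upgrades to $S \psi S = \psi^{-1}$, hence also $S \psi^{-1} S = \psi$ and $\psi S \psi = S$.

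The rejection part is now immediate: $\sigma$ fixes the support $\{y = S(x)\}$ of $\mu_N^{\mathrm{rej}}$ pointwise, since $\sigma(x, S(x)) = (x, S(x))$, so $\sigma_\#\mu_N^{\mathrm{rej}} = \mu_N^{\mathrm{rej}}$. For the acceptance part, testing $\sigma_\#\mu_N^{\mathrm{acc}}$ against $G$ gives $\int G(S\psi(x), S(x))\, \alpha_N(x)\, \Pi_N(\dif x)$, and I would substitute $x = \psi^{-1}(S(x'))$. Then $S(x) = S\psi^{-1}S(x') = \psi(x')$ and $S\psi(x) = x'$ by the reversibility identities, so the argument of $G$ becomes $(x', \psi(x'))$; $H_N(x) = H_N(S\psi^{-1}S(x')) = H_N(\psi(x'))$ by $H_N \circ S = H_N$; and the Jacobian of the substitution is $1$. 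Evaluating $\Delta H_N$ at $x = \psi^{-1}(S(x'))$, again with the three identities, gives $\Delta H_N(x) = H_N(x') - H_N(\psi(x'))$, hence $\alpha_N(x) = \min\{1, e^{H_N(\psi(x')) - H_N(x')}\}$. Plugging in and using the elementary identity $e^{-H_N(a)}\min\{1, e^{H_N(a) - H_N(b)}\} = \min\{e^{-H_N(a)}, e^{-H_N(b)}\} = e^{-H_N(b)}\min\{1, e^{H_N(b) - H_N(a)}\}$ with $a = \psi(x')$ and $b = x'$ collapses $\alpha_N(x)\, e^{-H_N(\psi(x'))}$ to $\alpha_N(x')\, e^{-H_N(x')}$, so the integral becomes $\int G(x', \psi(x'))\, \alpha_N(x')\, \Pi_N(\dif x')$. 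This proves (\ref{equ:generalized_reversibility}) on each $\mathcal{H}_N$.

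Finally I would pass to the limit $N \to \infty$: testing the finite-dimensional identity against bounded continuous $F$ yields $\iint F\, \dif\mu_N = \iint (F \circ \sigma)\, \dif\mu_N$, and one lets $N \to \infty$ using $\Pi_N \Rightarrow \Pi$, the convergence $\psi^{(T)}_{h,N} \to \psi^{(T)}_h$ from Proposition \ref{prop:discrete_dynamics_inequalities}(ii), and $\alpha_N \to \alpha$ from Lemma \ref{lemma:convergence_acceptance_prob}, together with a dominated-convergence argument for the weakly convergent $\mu_N$. I expect the analytic content to be light — the one non-elementary ingredient is that $\exp(-\Delta H)$, with $\Delta H$ as in (\ref{equ:DeltaH_pHMC}), is the relevant Radon--Nikodym density in infinite dimensions (from \citet{beskos2011hybrid}; see also the computation in Remark \ref{rem:justification_DeltaH}), which is what legitimises the limit. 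The main obstacle is instead purely bookkeeping: keeping the three symmetries $S\psi S = \psi^{-1}$, $\psi S\psi = S$, $H \circ S = H$ straight through the change of variables, and carrying the velocity sign flip consistently through the rejection branch of the kernel.
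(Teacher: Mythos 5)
Your proposal is correct, and its overall architecture (prove the identity on $\mathcal{H}_N$, then pass to the limit via \propref{prop:discrete_dynamics_inequalities}, \lemref{lemma:convergence_acceptance_prob} and dominated convergence) is the same as the paper's. The genuine difference is in the finite-dimensional step: the paper simply asserts that ``finite dimensional HMC fulfils the above equation'' and spends all its effort on the limit, whereas you actually prove the finite-dimensional identity from first principles --- volume preservation of the shear/rotation factors of the Strang splitting, the palindromic reversibility $S\psi S = \psi^{-1}$, and the elementary identity $e^{-H(a)}\min\{1, e^{H(a)-H(b)}\} = e^{-H(b)}\min\{1, e^{H(b)-H(a)}\}$ under the substitution $x = \psi^{-1}(S(x'))$. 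This buys a self-contained argument and, more importantly, forces you to confront a subtlety the paper only mentions after its proof: the generalized reversibility identity (\ref{equ:generalized_reversibility}) holds for the kernel that moves to $S(x)$ on rejection, not for the kernel as literally displayed in the paper (which stays at $x$); indeed, with the paper's kernel the rejection terms $k(x,x)(1-\alpha(x))$ and $k(x,S(x))(1-\alpha(x))$ do not match after the $S$-substitution unless one adopts the flipped convention. Your explicit use of $\delta_{S(x)}$ in the rejection branch, and the observation that $\sigma$ then fixes $\{y=S(x)\}$ pointwise, is the clean way to make the statement true as written. The limiting argument is essentially identical to the paper's; the only thing I would add there is the explicit domination of $e^{-\Phi}$ via \assref{ass:Phi_lower_growth_bound} and Fernique's theorem, which the paper uses to justify dominated convergence and which your sketch leaves implicit.
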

	\begin{proof}
		We want to show that $(p\Pi)(A, B) = (p^S\Pi)(B, A)$ for Borel sets $A, B \in \mathcal{B}(\mathcal{H}^l \times \mathcal{H}^l)$. It is enough to show that for any continuous, bounded function $k$ on $(\mathcal{H}^l \times\mathcal{H}^l) \times (\mathcal{H}^l \times\mathcal{H}^l)$, we have that
		\begin{equation*}
			\mathbb{E}_{(x, y) \sim p\Pi}[k(x, y)] = \mathbb{E}_{(x, y) \sim p^S\Pi}[k(y, x)].
		\end{equation*}
		It therefore suffices to show that
		\begin{align*}
			&\int_{\mathcal{H}^{l, 4}} \left(k(x, \psi(x)) \alpha(x) + k(x, x) (1-\alpha(x))\right) e^{-\Phi(x)} \dif \Pi_0(x) \\
			=& \int_{\mathcal{H}^{l, 4}} \left(k(S\circ\psi\circ S(x), x) \alpha(S(x)) + k(S(x), x) (1-\alpha(S(x)))\right) e^{-\Phi(x)} \dif \Pi_0(x).
		\end{align*}
		There is no normalization constant appearing in the above equations since we can just rescale $k$ by the inverse of the normalization constant. Since finite dimensional HMC fulfils the above equation we have that 
		\begin{align*}
			&\int_{\mathcal{H}^{l, 4}} \left(k(x_N, \psi_N(x_N)) \alpha_N(x_N) + k(x_N, x_N) (1-\alpha_N(x_N))\right) e^{-\Phi(x_N)} \dif \Pi_0(x) \\
			=& \int_{\mathcal{H}^{l, 4}} k(S\circ\psi\circ S(x_N), x_N) \alpha(S(x_N))e^{-\Phi(x_N)} \\
			& \qquad + k(S(x_N), x_N) (1-\alpha(S(x_N))) e^{-\Phi(x_N)} \dif \Pi_0(x).
		\end{align*}
		Here we used that $\Pi_{N, 0} = (\text{proj}_{\mathcal{H}_N \times \mathcal{H}_N})_* \Pi_0$ to get rid of the $N$-dependence in the measure itself. Taking the limit $N\to\infty$, we have that $x_N \to x$ and $\alpha_N(x_N) \to \alpha(x)$ due to \propref{prop:exact_dynamics_fd_converge}, \propref{prop:discrete_dynamics_inequalities} and \lemref{lemma:convergence_acceptance_prob}. Therefore the integrands converge pointwise to their infinite dimensional counterparts. The term $e^{-\Phi}$ can be upper bounded by $e^{K}e^{-\epsilon \norm{\cdot}_l}$ by \assref{ass:Phi_lower_growth_bound} for any $\epsilon$ and by choosing $\epsilon$ appropriately this is integrable due to Fernique's theorem (see \citet{hairer_spde}). Since $k$ and $\alpha_N$ are also bounded we can use dominated convergence to see that the above integrals converge to $\mathbb{E}_{(x, y) \sim p\Pi}[k(x, y)]$ and $\mathbb{E}_{(x, y) \sim p^S\Pi}[k(y, x)]$ respectively. Therefore these are equal and we conclude the proof.
	\end{proof}

	Generalized reversibility implies that the chain is invariant with respect to $\Pi$ if we move to $S(x)$ in case of a rejection. Note that this means that we would need to flip the velocity sign in Algorithm \ref{alg:hs_hmc} and Algorithm \ref{alg:hs_exact_hmc} in case of a rejection. But since we do sample a new velocity in the next iteration, this would not make a difference in practice. However, if one would do only partial velocity refreshments one could not skip this step.
	
	\subsection{Coupling of the Velocities}
	\label{sec:main_results_contraction}
	
	The main results of this section are contraction estimates for pHMC. One step of exact pHMC consists of sampling a new velocity and then letting the state evolve using Hamiltonian dynamics. The next state is given as
	\begin{equation*}
		q'(q, v) = \varphi_T(q, v),
	\end{equation*}
	where $\varphi$ is the exact flow of the Hamiltonian system (\ref{equ:hamiltonian_dynamics_velocity_infinite_dims}).
	
	We now couple two copies of exact pHMC at two different positions $x, y \in \mathcal{H}^l$ by sampling only one velocity $v \sim \mathcal{N}(0, C)$ and using that same velocity for both transitions. We want to show that on average, where the expectation is taken over $v$, these dynamics move closer to each other. The final result will be the following theorem.

	\begin{theorem}
		\label{thm:exact_pHMC_contractive_hs}
		Assume that \assref{ass:DPhiBoundL}, \assref{ass:PhiConvex} and \assref{ass:DPhi2BoundM} hold. Assume that $T$ is such that for $h=0$, \assref{ass:LthtzetaL} holds, i.e. $T^2(L + 1) \le \frac{\zeta}{L + 1}$. Then for any $x, y \in \mathcal{H}^l$,
		\begin{equation*}
			\bb{E}\left[\norm{x'(x, v) - y'(y, v)}_l\right] \le \left(1 - \frac{1}{27}\zeta T^2\right) \norm{x - y}_l.
		\end{equation*}
	\end{theorem}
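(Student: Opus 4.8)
The plan is to analyze the synchronously coupled pair of exact pHMC trajectories through the second-order ODE satisfied by the position difference. Writing $q_t = q_t(x,v)$ and $\tilde q_t = q_t(y,v)$ for the two position components of $\varphi_t(x,v)$ and $\varphi_t(y,v)$ (and similarly for the velocities), the synchronous coupling means both copies use the same initial velocity. Since the velocity equation $\dot v_t = -q_t - C\Dif\Phi(q_t)$ does not involve $v_t$, the difference $e_t := q_t - \tilde q_t$ solves a closed second-order equation $\ddot e_t = -e_t - \left(C\Dif\Phi(q_t) - C\Dif\Phi(\tilde q_t)\right)$ with initial data $e_0 = x_q - y_q$ (the position parts of $x,y$; but note $x,y\in\mathcal H^l$ here are positions, so the velocity difference starts at $0$), $\dot e_0 = v - v = 0$. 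I would first set up this ODE carefully, recalling from \eqref{equ:ode_discr_dynamics_nonlinear} with $h=0$ that the exact flow is recovered, and use the Lipschitz bound \eqref{equ:DPhiDifferencebound} from \assref{ass:DPhiBoundL} together with the convexity-type bound from \assref{ass:PhiConvex}.

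Next I would derive a differential inequality for $\norm{e_t}_l^2$. Let $r(t) = \norm{e_t}_l^2$. Then $\dot r = 2\sprod{e_t}{\dot e_t}_l$ and $\ddot r = 2\norm{\dot e_t}_l^2 + 2\sprod{e_t}{\ddot e_t}_l = 2\norm{\dot e_t}_l^2 - 2\norm{e_t}_l^2 - 2\sprod{e_t}{C\Dif\Phi(q_t) - C\Dif\Phi(\tilde q_t)}_l$. Using \assref{ass:PhiConvex} in the form $\sprod{e_t}{e_t}_l + \sprod{C\Dif\Phi(q_t)-C\Dif\Phi(\tilde q_t)}{e_t}_l \ge \zeta\norm{e_t}_l^2$, the last two terms combine to give $-2\left(\sprod{e_t}{e_t}_l + \sprod{C\Dif\Phi(q_t)-C\Dif\Phi(\tilde q_t)}{e_t}_l\right) \le -2\zeta r(t)$, so $\ddot r \le 2\norm{\dot e_t}_l^2 - 2\zeta r$. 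This alone is not enough, so I would Taylor-expand $r(T)$ around $t=0$: since $r(0) = \norm{x-y}_l^2$, $\dot r(0) = 0$ (because $\dot e_0 = 0$), we get $r(T) = r(0) + \int_0^T (T-t)\ddot r(t)\,dt \le r(0) - 2\zeta\int_0^T (T-t) r(t)\,dt + 2\int_0^T(T-t)\norm{\dot e_t}_l^2\,dt$. To control $\norm{\dot e_t}_l$, I would use the integrated form $\dot e_t = \int_0^t \ddot e_s\,ds$ with $\ddot e_s = -e_s - (C\Dif\Phi(q_s)-C\Dif\Phi(\tilde q_s))$, giving $\norm{\dot e_t}_l \le \int_0^t (1+L)\norm{e_s}_l\,ds \le (1+L)T \sup_{s\le T}\norm{e_s}_l$ via \eqref{equ:DPhiDifferencebound}; and a crude Grönwall-type bound shows $\sup_{s\le T}\norm{e_s}_l \le c\norm{e_0}_l$ under \assref{ass:LthtzetaL}, which caps $T^2(1+L)^2$ by something like $\zeta$, keeping all constants under control.

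Assembling these pieces, $r(T) \le r(0) - 2\zeta\int_0^T(T-t)r(t)\,dt + (\text{small})\cdot r(0)$; since $\int_0^T(T-t)\,dt = T^2/2$ and $r(t)$ is comparable to $r(0)$ up to constants under the step-size/time restriction, the middle term contributes roughly $-\zeta T^2 r(0)$ times a dimensionless constant, while the error term is lower order. Taking square roots and being careful with the constant (the factor $\frac{1}{27}$ suggests a somewhat lossy but explicit bookkeeping) yields $\norm{e_T}_l \le (1 - \tfrac{1}{27}\zeta T^2)\norm{x-y}_l$. Finally, since everything here is deterministic in $v$ (the coupling removes all $v$-dependence from $e_t$), the expectation over $v$ is trivial and the bound holds pointwise, hence in expectation. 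I would also note this can be run at the discretized level on $\mathcal H_N$ with $N$-independent constants and then passed to the limit via \propref{prop:exact_dynamics_fd_converge}, though for $h=0$ the continuous argument is cleanest.

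The main obstacle I expect is bounding $\int_0^T(T-t)\norm{\dot e_t}_l^2\,dt$ tightly enough that it is genuinely lower-order compared to the gain $-\zeta T^2 r(0)$: one needs $\norm{\dot e_t}_l^2$ to be $O(T^2(1+L)^2 r(0))$ uniformly, which is exactly where \assref{ass:LthtzetaL} (forcing $T^2(1+L)^2 \le \zeta(1+L) \le (1+L)^2$, i.e. $T^2 \le \zeta/(1+L) \le 1$) gets used, and where the explicit constant $\frac{1}{27}$ is pinned down — it requires carefully tracking how the $\sup_{s\le T}\norm{e_s}_l \le c\,\norm{e_0}_l$ constant $c$ degrades and ensuring the net coefficient of $r(0)$ stays strictly below $1$ while the $\zeta T^2$ term survives with a definite fraction.
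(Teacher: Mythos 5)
Your overall strategy is the right one and is in fact very close to the paper's: the proof of Theorem~\ref{thm:exact_pHMC_contractive_hs} reduces to a deterministic, $v$-uniform contraction estimate for the synchronously coupled exact flow (Lemma~\ref{lemma:contraction} with $h=0$, giving Theorem~\ref{thm:exact_pHMC_contractive_Rd}), followed by the limit $N\to\infty$ via Proposition~\ref{prop:exact_dynamics_fd_converge} and dominated convergence; and Lemma~\ref{lemma:contraction} likewise works with $a(t)=\norm{z_t}_l^2$, uses Assumption~\ref{ass:PhiConvex} to produce the $-2\zeta a$ term, and uses the a priori bounds of Lemma~\ref{lemma:zw_prioribounds} to control $\norm{w_t}_l^2$. (Two small corrections: $e_t$ does depend on $v$ through the nonlinearity $C\Dif\Phi(q_t)-C\Dif\Phi(\tilde q_t)$ — what is true is that the resulting bound is uniform in $v$ for $h=0$, which is why the expectation is harmless; and the paper only establishes well-posedness of the flow on the truncations $\mathcal{H}_N$, so the limiting step you mention in passing is the actual route rather than an optional remark.)

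The genuine gap is in the step you yourself flag as the main obstacle. Writing $r(T)=r(0)+\int_0^T(T-t)\ddot r(t)\,dt$ and substituting $\ddot r\le-2\zeta r+2\norm{\dot e_t}_l^2$ forces you to \emph{lower}-bound $r(t)$ on $[0,T]$ to extract the contraction from $-2\zeta\int_0^T(T-t)r(t)\,dt$, and with the crude a priori bounds this fails to deliver the stated constant at the boundary of Assumption~\ref{ass:LthtzetaL}. Concretely, take $\zeta=1+L$ and $T^2(1+L)^2=\zeta$, so $T^2(1+L)=\zeta T^2=1$. The available lower bound $\norm{e_t}_l\ge(1-t^2(1+L))\norm{e_0}_l$ gives $-2\zeta\int_0^T(T-t)(1-t^2(1+L))^2\,dt\le-\tfrac{11}{15}\zeta T^2 r(0)$, while $\norm{\dot e_t}_l\le 2t(1+L)\norm{e_0}_l$ gives $2\int_0^T(T-t)\norm{\dot e_t}_l^2\,dt\le\tfrac{2}{3}(1+L)^2T^4r(0)=\tfrac{2}{3}\zeta T^2r(0)$; the net factor is $1-\tfrac{1}{15}\zeta T^2$ for $r$, i.e.\ roughly $1-\tfrac{1}{30}\zeta T^2$ for the norm, which misses the claimed $1-\tfrac{1}{27}\zeta T^2$. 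The paper closes exactly this gap by two devices: it solves the comparison system exactly by variation of constants, so the entire $-2\zeta a$ contribution is resummed into $\cos(\sqrt{2\zeta}\,t)\le 1-\zeta t^2+\tfrac16\zeta^2t^4$ (see \eqref{equ:solution_ode_a}) and no lower bound on $r(t)$ is needed — the ``good-term error'' is $\tfrac16\zeta^2t^4$ rather than your $\tfrac13\zeta(1+L)T^4$ — and it then runs a continuity/contradiction bootstrap showing $\sup_{s\le T}\norm{e_s}_l\le\norm{e_0}_l$ before converting $(1-\tfrac16\zeta T^2)$ into the final rate. To make your version work you must at minimum incorporate that bootstrap (assume a first time at which $\norm{e_s}_l$ exceeds $\norm{e_0}_l$ and derive a contradiction from your own inequality), and preferably replace the kernel $(T-t)$ by the $\sin(\sqrt{2\zeta}(T-t))/\sqrt{2\zeta}$ kernel; as written, the bookkeeping does not close.
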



	We also derive an analogous result for the case where we numerically approximate $\varphi$ by $\psi$, see (\ref{equ:numerical_integrator_pHMC}). For given $q\in \mathcal{H}^l$ and a fixed $v \in \mathcal{H}^l, U \in \intcc{0, 1}$ the transition map of adjusted pHMC takes the form
	\begin{equation}
		\label{equ:q'}
		q'(q, v, U) = \psi^{(T)}_h(q, v) \mathbbm{1}_{\{U \le \alpha(q, v)\}} + (q, v)\mathbbm{1}_{\{U > \alpha(q, v)\}}.
	\end{equation}
	We again couple two copies of adjusted pHMC at two different positions $x, y \in \mathcal{H}^l$. We do this by not only using the same $v$, but also using the same $U$ to determine $x'(x, v, U)$ and $y'(y, v, U)$ by (\ref{equ:q'}). The coupling of the velocities together with the convexity of the potential will lead to a decrease in $\norm{\psi_h^{(T)}(x, v) - \psi_h^{(T)}(y, v)}_l$. The coupling of $U$ will make the probability that one of the moves is rejected and the other one is accepted small.
	
	\begin{theorem}
		\label{thm:contraction_hs}
		Assume that \assref{ass:DPhiBoundL}, \assref{ass:PhiConvex} and \assref{ass:DPhi2BoundM} hold. Assume that $T$ and $h_1$ are such that \assref{ass:LthtzetaL} is satisfied for $h = h_1$. Then there exists an $h_0$ such that for any $0 < h \le \min\{h_0, h_1\}$ with $T/h \in \bb{Z}$ and for any $x, y \in \mathcal{H}^l$ with $\max\{\norm{x}_l, \norm{y}_l\} \le R$ we have
		\begin{equation*}
			\bb{E}\left[\norm{x'(x, v, U) - y'(y, v, U)}_l\right] \le \left(1 - \frac{1}{27}\zeta T^2\right) \norm{x - y}_l
		\end{equation*}
		where $R$ is any real number such that $\bb{P}[\norm{v}_l > R] \le \frac{\zeta}{2000(L+1)}$. The expectation is taken over $v$ and $U$. Furthermore for fixed $L, L', M$ and $C$ the dependence of $h_0^{-1}$ on $T$ and $R$ is of the form $\mathcal{O}((1 + T^{-1/2})(1 + R^2))$.
	\end{theorem}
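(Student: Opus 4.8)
\emph{Proof strategy.}
The plan is to modify the synchronous-coupling argument of Theorem~\ref{thm:exact_pHMC_contractive_hs} so as to absorb the error caused by the numerical integrator and by the Metropolis accept/reject step into the contraction. As throughout Section~\ref{sec:conv_pHMC}, one works first on the truncation $\mathcal{H}_N$, keeping all constants independent of $N$ by means of Proposition~\ref{prop:exact_dynamics_fd_converge}, Proposition~\ref{prop:discrete_dynamics_inequalities} and Lemma~\ref{lemma:convergence_acceptance_prob}, and recovers the statement on $\mathcal{H}^l$ in the limit $N\to\infty$. Fix $x,y\in\mathcal{H}^l$ with $\max\{\norm{x}_l,\norm{y}_l\}\le R$, draw a single $v\sim\mathcal{N}(0,C)$ and a single $U\in\intcc{0,1}$, and run both chains with this pair.

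The main step is a contraction of the proposal map that holds for \emph{every} fixed $v$: for a suitable $h_0$ and a constant $c_0>\tfrac1{27}$, and all $0<h\le\min\{h_0,h_1\}$,
\begin{equation*}
\norm{\psi^{(T)}_h(x,v)-\psi^{(T)}_h(y,v)}_l\ \le\ (1-\lambda)\norm{x-y}_l,\qquad \lambda:=c_0\,\zeta T^2<1 .
\end{equation*}
One obtains this exactly as in Theorem~\ref{thm:exact_pHMC_contractive_hs}, but starting from the non-linear interpolation (\ref{equ:ode_discr_interpolation_nonlinear})--(\ref{equ:ode_discr_dynamics_nonlinear}) of the numerical trajectory rather than from the exact flow. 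With $e_t=q^{(1)}_t-q^{(2)}_t$ and $f_t=v^{(1)}_t-v^{(2)}_t$ for the two interpolated paths one has $e_0=x-y$ and $f_0=0$, the velocity being shared, and the equations for $(e_t,f_t)$ see the non-linearity only through differences $C\Dif\Phi(q^{(1)}_s)-C\Dif\Phi(q^{(2)}_s)$, controlled in norm by $L\norm{e_s}_l$ via (\ref{equ:DPhiDifferencebound}) and in their $\sprod{\cdot}{\cdot}_l$-pairing with $e_s$ from below by $(\zeta-1)\norm{e_s}_l^2$ via \assref{ass:PhiConvex}. Thus the $(e_t,f_t)$-system is closed, has coefficients of size $O(L+1)$, and is not forced by $v$, so $\sup_{[0,T]}(\norm{e_t}_l+\norm{f_t}_l)\lesssim\norm{x-y}_l$ with $v$-free constants; plugging this into the estimates for $\tfrac{d}{dt}\norm{e_t}_l^2$ and $\tfrac{d}{dt}\sprod{e_t}{f_t}_l$ reproduces the exact-case contraction up to corrections of order $O((L+1)h\,T)\norm{x-y}_l^2$ stemming from the step-size terms $\tfrac{\bt t}{2}C\Dif\Phi(q_{\ceil t})$, $\tfrac{\sin(\bt t)}{2}C\Dif\Phi(q_{\floor t})$ and from replacing $e_t$ by $e_{\floor t},e_{\ceil t}$ in \assref{ass:PhiConvex}. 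Since \assref{ass:LthtzetaL} forces $T^2(L+1)\le\zeta/(L+1)$ (it holds in particular at $h=0$), the leading gain is of size $\zeta T^2$ and these corrections are negligible once $h$ is below a threshold depending only on $\zeta,L,T$; the constant $\tfrac1{27}$ in Theorem~\ref{thm:exact_pHMC_contractive_hs} is not optimal, which leaves the room to keep $c_0$ strictly above it.

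For the accept/reject step, put $\alpha_x=\alpha(x,v)$, $\alpha_y=\alpha(y,v)$. With the shared $U$ the coupled update (\ref{equ:q'}) has three outcomes: both accepted (probability $\alpha_x\wedge\alpha_y$, giving $\norm{x'-y'}_l\le(1-\lambda)\norm{x-y}_l$); both rejected (probability $1-\alpha_x\vee\alpha_y$, giving $\norm{x'-y'}_l=\norm{x-y}_l$); or exactly one rejected (probability $\abs{\alpha_x-\alpha_y}$, giving $\norm{x'-y'}_l\le\norm{x-y}_l+\big(\norm{\psi^{(T)}_h(x,v)-x}_l\vee\norm{\psi^{(T)}_h(y,v)-y}_l\big)$, the bracket being $\le C(T)(1+\norm{x}_l+\norm{y}_l+\norm{v}_l)$ by (\ref{equ:growth_bound_discretized})). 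As $t\mapsto1\wedge e^{-t}$ is $1$-Lipschitz, $1-\alpha_x\wedge\alpha_y\le\abs{\Delta H(x,v)}+\abs{\Delta H(y,v)}$ and $\abs{\alpha_x-\alpha_y}\le\abs{\Delta H(x,v)-\Delta H(y,v)}$. Because Strang's splitting (\ref{equ:numerical_integrator_pHMC}) is second order and $\Phi\in C^4$ with bounded derivatives (\assref{ass:DPhi2BoundM}), differentiating the backward-error expansion of $\Delta H$ and using the $N$-independent trajectory bounds of Proposition~\ref{prop:discrete_dynamics_inequalities} yields $\abs{\Delta H(q,v)}\le C(T)h^2(1+\norm{q}_l+\norm{v}_l)^k$ and $\abs{\Delta H(x,v)-\Delta H(y,v)}\le C(T)h^2(1+\norm{x}_l+\norm{y}_l+\norm{v}_l)^k\norm{x-y}_l$ for a fixed $k$. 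Averaging over $U$ and then over $v$, bounding the first outcome by $(1-\lambda)\norm{x-y}_l$ on $\{\norm{v}_l\le R\}$ and merely by non-expansiveness on $\{\norm{v}_l>R\}$, and using these $\Delta H$-estimates together with $\bb{P}[\norm{v}_l>R]\le\tfrac{\zeta}{2000(L+1)}$ and the finiteness of all polynomial moments of $\norm{v}_l$ (Fernique's theorem), one gets
\begin{equation*}
\bb{E}\big[\norm{x'(x,v,U)-y'(y,v,U)}_l\big]\ \le\ \Big(1-\lambda\,\bb{P}\big[\text{both accepted and }\norm{v}_l\le R\big]+C(T,R)h^2\Big)\norm{x-y}_l .
\end{equation*}
Since $\bb{P}[\text{both accepted and }\norm{v}_l\le R]\ge 1-\tfrac{\zeta}{2000(L+1)}-C(T,R)h^2$ is close to $1$ for small $h$ while $\lambda=c_0\zeta T^2$ stays well above $\tfrac1{27}\zeta T^2$, shrinking $h_0$ so that $C(T,R)h_0^2$ is also negligible against $\zeta T^2$ — which, tracking the constants (the $R^2$ coming from the polynomial-in-$R$ energy-error bound), amounts to $h_0^{-1}=\mathcal{O}\big((1+T^{-1/2})(1+R^2)\big)$ — produces the asserted bound $(1-\tfrac1{27}\zeta T^2)\norm{x-y}_l$.

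The principal obstacle is the first step. Unlike in \citet{bou2018coupling}, the linear interpolation of our integrator does not obey a clean second-order equation, so one is forced onto the non-linear interpolation (\ref{equ:ode_discr_dynamics_nonlinear}), whose right-hand side evaluates $C\Dif\Phi$ at $q_{\floor t}$ and $q_{\ceil t}$ rather than at $q_t$; carrying these index mismatches through the differential inequalities while still extracting a $v$-independent contraction of order $\zeta T^2$ — large enough to survive the $O(h^2)$ accept/reject perturbation — is the delicate part. A secondary difficulty is proving the $h^2$ bounds on $\Delta H$ and on its Lipschitz dependence on $(q,v)$, with the indicated growth in $\norm{q}_l,\norm{v}_l$ and constants independent of $N$, so that the limit $N\to\infty$ is legitimate.
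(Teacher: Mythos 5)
Your overall architecture is the same as the paper's: prove the contraction for the finite-dimensional truncation, decompose the expectation over the accept/reject outcomes, control the rejection and symmetric-difference probabilities through bounds on $\Delta H$ and on $\partial\Delta H$ of order $Th^2$ times a polynomial in $\norm{x}_l,\norm{y}_l,\norm{v}_l$, and pass to the limit $N\to\infty$ by dominated convergence using \propref{prop:discrete_dynamics_inequalities} and \lemref{lemma:convergence_acceptance_prob}. The accept/reject bookkeeping, the use of the $1$-Lipschitz continuity of $t\mapsto 1\wedge e^{-t}$, and the final balancing of $h_0$ against $\zeta T^2$ all match the paper's proof of \thmref{thm:contraction_Rd}.

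However, your main step contains a genuine gap. You claim that the coupled proposal map contracts \emph{for every fixed} $v$, with an $h$-threshold depending only on $\zeta,L,T$, on the grounds that the difference system $(e_t,f_t)$ is ``closed and not forced by $v$.'' This is not true for the discretized dynamics. The discretization error in the contraction estimate comes from replacing $CA_t$ by $CA_{\floor{t}}$ and $CA_{\ceil{t}}$ (with $A_s=\Dif\Phi(q_s(x,v))-\Dif\Phi(q_s(y,v))$), and bounding $\norm{CA_{\ceil{t}}-CA_{\floor{t}}}_l$ forces you to differentiate along the trajectory:
\begin{equation*}
CA_{\ceil{t}}-CA_{\floor{t}}=\int_{\floor{t}}^{\ceil{t}}\bigl(C\Dif^2\Phi(x_r)-C\Dif^2\Phi(y_r)\bigr)\dot{x}_r+C\Dif^2\Phi(y_r)\dot{z}_r\,\dif r,
\end{equation*}
whose first term is bounded by $Mh\,z_{\ceil{t}}^*\bigl(v_{\ceil{t}}^*+h(Lx_{\ceil{t}}^*+L')\bigr)$ via \assref{ass:DPhi2BoundM}. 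The factor $v_{\ceil{t}}^*$ grows linearly in $\norm{v}_l$, so the correction to the contraction is of size $O\bigl(Th\,M(1+\norm{x}_l+\norm{v}_l)\bigr)\norm{x-y}_l^2$, not $O((L+1)hT)\norm{x-y}_l^2$ as you state. This is exactly why \lemref{lemma:contraction} carries the hypothesis $(1+\norm{x}_l+\norm{v}_l)h\le\zeta/K$, why the contraction is only available on $\{\norm{v}_l\le R\}$ for $h\le\zeta/(K(1+2R))$, and why $h_0^{-1}$ must depend on $R$ already at this stage (contributing the $O(1+R)$ factor $h_2^{-1}$ in the paper). Relatedly, your fallback on $\{\norm{v}_l>R\}$ by ``non-expansiveness'' is false: without the smallness condition the coupled proposals can move apart, and the a priori bound (\ref{equ:same_vel_zbound}) only gives $\norm{q_T(x,v)-q_T(y,v)}_l\le(1+T^2+L(T^2+2Th))\norm{x-y}_l$, an expansion by up to $(L+1)T^2+2LTh$. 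The proof still closes, but only because the hypothesis $\bb{P}[\norm{v}_l>R]\le\frac{\zeta}{2000(L+1)}$ is tuned precisely so that this expansion, multiplied by the bad-event probability, is dominated by the gain $\frac{1}{24}\zeta T^2$; under genuine non-expansiveness the specific constant $2000(L+1)$ would be irrelevant, which should have been a warning sign. To repair the argument, replace the $v$-uniform contraction claim by the conditional contraction of \lemref{lemma:contraction} on $\{\norm{v}_l\le R\}$ and use the quantitative expansion bound of \lemref{lemma:zw_prioribounds} on the complement.
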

	
	From \thmref{thm:exact_pHMC_contractive_hs} we can now conclude:
	\begin{corollary}
		\label{cor:wasserstein_distance_decay}
		Let $\mathcal{W}^1$ be the $L^1$ Wasserstein distance and $P$ is the transition kernel of exact pHMC. Then, for any two distributions $\mu$ and $\nu$ on $\mathcal{H}^l$ it holds that
		\begin{equation*}
			\mathcal{W}^1(\mu P^n, \nu P^n) \le \left(1 - \frac{1}{27}\zeta T^2\right)^n \mathcal{W}^1(\mu, \nu).
		\end{equation*}
	\end{corollary}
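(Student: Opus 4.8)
The plan is to derive Corollary~\ref{cor:wasserstein_distance_decay} from Theorem~\ref{thm:exact_pHMC_contractive_hs} by the standard coupling argument for Wasserstein contraction. Recall that by the Kantorovich--Rubinstein duality (or directly by definition of the $L^1$ Wasserstein distance on the Polish space $\mathcal{H}^l$), $\mathcal{W}^1(\mu,\nu)$ equals the infimum of $\mathbb{E}[\norm{X-Y}_l]$ over all couplings $(X,Y)$ with $X\sim\mu$, $Y\sim\nu$. The key point is that the synchronous coupling used in Theorem~\ref{thm:exact_pHMC_contractive_hs} --- sample one velocity $v\sim\mathcal{N}(0,C)$ and apply $\varphi_T(\cdot,v)$ to both copies --- defines, for each pair of starting points $(x,y)$, a coupling of the one-step transition kernels $P(x,\cdot)$ and $P(y,\cdot)$.

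First I would fix an optimal (or, by a standard argument, near-optimal up to $\varepsilon$) coupling $(X_0,Y_0)$ of $\mu$ and $\nu$ realizing $\mathcal{W}^1(\mu,\nu)$. Then I would construct the coupled chain $(X_n,Y_n)_{n\ge0}$ by, at each step, drawing a fresh velocity $v_n\sim\mathcal{N}(0,C)$ independently of everything so far and setting $X_{n+1}=x'(X_n,v_n)$, $Y_{n+1}=y'(Y_n,v_n)$ with $x',y'$ as in the theorem. By construction each marginal is a faithful copy of exact pHMC, so $X_n\sim\mu P^n$ and $Y_n\sim\nu P^n$, and hence $\mathcal{W}^1(\mu P^n,\nu P^n)\le\mathbb{E}[\norm{X_n-Y_n}_l]$. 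Next I would condition on $\mathcal{F}_n=\sigma(X_0,Y_0,v_0,\dots,v_{n-1})$: since $v_n$ is independent of $\mathcal{F}_n$, Theorem~\ref{thm:exact_pHMC_contractive_hs} applied pointwise with $x=X_n$, $y=Y_n$ gives
\begin{equation*}
	\mathbb{E}\bigl[\norm{X_{n+1}-Y_{n+1}}_l \bigm| \mathcal{F}_n\bigr] \le \Bigl(1 - \tfrac{1}{27}\zeta T^2\Bigr)\norm{X_n-Y_n}_l.
\end{equation*}
Taking expectations and iterating yields $\mathbb{E}[\norm{X_n-Y_n}_l]\le(1-\tfrac{1}{27}\zeta T^2)^n\,\mathbb{E}[\norm{X_0-Y_0}_l]$, and since the coupling $(X_0,Y_0)$ was chosen (near-)optimal, the right-hand side is (up to $\varepsilon$) $(1-\tfrac{1}{27}\zeta T^2)^n\,\mathcal{W}^1(\mu,\nu)$; letting $\varepsilon\to0$ finishes the proof.

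I do not expect a genuine obstacle here: the only points requiring a line of care are (i) the measurability of $(x,y)\mapsto(x'(x,v),y'(y,v))$ jointly in $(x,y,v)$, which follows from the global Lipschitz continuity of $\varphi_T$ on $\mathcal{H}^l\times\mathcal{H}^l$ established in Proposition~\ref{prop:exact_dynamics_fd_converge}(ii) together with continuity in the Gaussian input, so that the coupled chain is well-defined as a Markov process; and (ii) that $\mathcal{W}^1(\mu,\nu)<\infty$ may fail for arbitrary $\mu,\nu$, in which case the bound is vacuous and there is nothing to prove, so one may assume $\mathcal{W}^1(\mu,\nu)<\infty$ and pick the (near-)optimal coupling. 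Everything else is the textbook ``contraction of the one-step kernel implies geometric contraction in Wasserstein distance'' argument.
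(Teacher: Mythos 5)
Your proposal is correct and is exactly the standard argument the paper implicitly relies on: the paper gives no written proof of this corollary, simply asserting that it follows from Theorem~\ref{thm:exact_pHMC_contractive_hs}, and your chain of reasoning (near-optimal initial coupling, synchronous coupling of the velocities at each step, conditional application of the one-step contraction, iteration) is the intended justification. The two points you flag for care, measurability of the coupled update and possible infiniteness of $\mathcal{W}^1(\mu,\nu)$, are handled appropriately.
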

	Setting $\mu$ to the stationary distribution of $P$ we see that the $\mathcal{W}^1$ distance decreases exponentially and we have an explicit expression for the rate. 
	
	For the adjusted case, \thmref{thm:contraction_hs}, we do not directly get a bound on the Wasserstein distance since we have a requirement on the size of $\norm{x}_l$ and $\norm{y}_l$. But an appropriate choice of $R$ still gives us a good understanding of how fast the trajectories converge.

	\subsection{Proofs}
	\label{sec:proofs_contraction_pHMC}
	\subsubsection{Notation in Finite Dimensions}
	We first prove the above theorems hold for the discretized pHMC. We discretize the algorithm and the space as we did in \secref{sec:proofs_pHMC}, i.e. by replacing $\alpha$ with $\alpha_N$, $C$ with $C_N$, $\Phi$ with $\Phi_N$, and the Hamiltonian dynamics with their finite dimensional counterpart (\ref{equ:fdd_dynamics_pHMC}) in Algorithm \ref{alg:hs_exact_hmc}
	and Algorithm \ref{alg:hs_hmc}. We will see that all bounds only depend on $L, M, \zeta$ and $C$ which are all quantities that are independent of the discretization dimension $N$. In the end we take the limit $N \to \infty$ to conclude that the results also hold on the Hilbert space itself.	
	
	Since the following propositions will be quite technical already, we will denote $q, v \in \bb{R}^N$ and drop the notational dependence on $N$. Only in the end, when we take the limit, we will again write $q^N$ and $v^N$ for the projections of $q, v\in\mathcal{H}^l$ to $\mathcal{H}_N$. We will also just use $\Phi$ and $C$ instead of $\Phi_N$ and $C_N$. As long as we make the upper bounds in terms of $L, M, \zeta$ and $C$, that does not matter since these constants are dimensionless: $\Dif\Phi_N(q) = \mbox{proj}_{\mathcal{H}_N}\Dif\Phi(q)$ fulfils the same bounds with the same constants $L$ and $M$ on $\mathcal{H}_N$ as $\Dif\Phi$ does on $\mathcal{H}^l$. Also expectations of $v^N \sim \mathcal{N}(0, C_N)$ can in our case always be upper bounded by the corresponding expectations of $v \sim \mathcal{N}(0, C)$.
	
	\subsubsection{Proofs in Finite Dimensions}
	We now proceed to the proofs. The first two lemmas are some preliminary estimates that give us growth bounds for the dynamics itself and for the difference of the dynamics. In these proofs we use that we can eliminate $v$ and get a second order equation. The following \lemref{lemma:contraction} then states that the dynamics converge to each other when the velocity is the same. This lemma, together with \lemref{lemma:DH} which then makes statements about the energy error and thus the acceptance probability are the main technical workhorses of this section. Afterwards we proceed with proving the main theorems. 
	
	We from now on denote by $q_t(x, v)$ and $v_t(x, v)$ the position and velocity coordinate at time $t$ when started in $x, v$. 
	\begin{lemma}
		\label{lemma:qv_prioribounds}
		Assume that \assref{ass:Ltht1} holds.
		Let $x, v \in \bb{R}^N$ be arbitrary. Let $q_t(x, v), v_t(x, v)$ fulfil (\ref{equ:ode_discr_dynamics_linear}) with initial conditions $x, v$. Then if $t \in h\bb{Z}$
		\begin{align*}
			\max_{s \le t}\norm{q_s(x, v) - (x + sv)}_l \le& (t^2 + L(t^2 + 2th))\max(\norm{x}_l, \norm{x + tv}_l) \\
			&+ L'(t^2 + 2th)
		\end{align*}
		and
		\begin{align*}
			\max_{s \le t}\norm{v_s(x, v) - v}_l \le& (1+L)t \max_{s \le t}\norm{q_s(x, v)}_l + L't \\
			\le& (1 + L)t(1 + t^2 + L(t^2 + 2th))\max(\norm{x}_l, \norm{x + tv}_l)\\
			&+ L'((1+ L)(t^3 + 2t^2h) + t).
		\end{align*}
		Especially
		\begin{align*}
			\max_{s \le t}\norm{q_s(x, v)}_l \le& 2\max\{\norm{x}_l, \norm{x + tv}_l\} + L'(t^2 + 2th) \le K(1 + \norm{x}_l + \norm{v}_l)
		\end{align*}
		and
		\begin{align*}
			\max_{s \le t}\norm{v_s(x, v)}_l \le& \norm{v}_l + (1+L)t(2\max\{\norm{x}_l, \norm{x + tv}_l\} + L'(t^2 + 2th)) +L't \\
			=& \norm{v}_l + 2(1+L)t\max\{\norm{x}_l, \norm{x + tv}_l\} \\
			&+  L't((1+ L)(t^2 + 2th) + 1) \\
			\le& K(1 + \norm{x}_l + \norm{v}_l),
		\end{align*}
		where $K$ is a constant depending only on $L, L'$.
	\end{lemma}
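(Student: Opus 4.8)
The plan is to prove the bounds first at the grid points $s\in h\bb{Z}$ and then upgrade them to the claimed suprema over $\intcc{0,t}$. On each interval $\intcc{\floor{t},\ceil{t}}$ the right-hand side of \eqref{equ:ode_discr_dynamics_linear} is constant, so $q_s$ and $v_s$ are affine functions of $s$ there; hence $s\mapsto\norm{q_s-(x+sv)}_l$ and $s\mapsto\norm{v_s-v}_l$ are convex on each such interval, and their maxima over $\intcc{0,t}$ are attained at a grid point. It therefore suffices to bound $\norm{q^k-(x+khv)}_l$ and $\norm{v^k-v}_l$ for the integers $k$ with $kh\le t$, where $(q^k,v^k):=\psi_h^k(x,v)$ as before.

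The central idea is to eliminate the velocity. Combining the two lines of one step of pHMC (see \remref{rem:justification_DeltaH}), a short elimination yields the second-order recursion
\begin{equation*}
	q^{k+1}-2\cos(h)\,q^k+q^{k-1}=-h\sin(h)\,C\Dif\Phi(q^k),
\end{equation*}
with $q^0=x$ and $q^1=\cos(h)\,x+\sin(h)\,v-\tfrac{h}{2}\sin(h)\,C\Dif\Phi(x)$. Since $\cos(kh)$ and $\sin(kh)$ span the solutions of the homogeneous recursion, solving this linear recursion gives the ``forced rotation''
\begin{equation*}
	q^k=\cos(kh)\,x+\sin(kh)\,v-F_k,\qquad F_k=\tfrac{h}{2}\sin(kh)\,C\Dif\Phi(x)+h\sum_{j=1}^{k-1}\sin((k-j)h)\,C\Dif\Phi(q^j),
\end{equation*}
and $\abs{\sin\theta}\le\theta$ together with $kh\le t$ gives $\norm{F_k}_l\le\tfrac12(t^2+2th)\max_{0\le j<k}\norm{C\Dif\Phi(q^j)}_l$. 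To remove $\norm{v}_l$, which does not appear in the claimed bounds, I rewrite $khv=(x+khv)-x$, so that
\begin{equation*}
	\cos(kh)\,x+\sin(kh)\,v-(x+khv)=\Bigl(\cos(kh)-\tfrac{\sin(kh)}{kh}\Bigr)x+\Bigl(\tfrac{\sin(kh)}{kh}-1\Bigr)(x+khv);
\end{equation*}
using $\abs{\cos\theta-\tfrac{\sin\theta}{\theta}}+\abs{\tfrac{\sin\theta}{\theta}-1}\le\theta^2$ for $\theta=kh\le t\le1$ (note $T\le1$ under \assref{ass:Ltht1}) and the convexity estimate $\norm{x+khv}_l\le\max(\norm{x}_l,\norm{x+tv}_l)$, this ``free'' part is bounded by $t^2\max(\norm{x}_l,\norm{x+tv}_l)$.

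These facts are combined by induction on $k$. If the claimed bound for $q$ holds for all indices $<k$, then the triangle inequality and \assref{ass:Ltht1} (which gives $t^2+L(t^2+2th)\le1$) yield $\norm{q^j}_l\le2\max(\norm{x}_l,\norm{x+tv}_l)+L'(t^2+2th)$ for $j<k$, hence $\norm{C\Dif\Phi(q^j)}_l\le L\norm{q^j}_l+L'$ by \assref{ass:DPhiBoundL}; substituting this into the bound for $\norm{F_k}_l$, the inequality $L(t^2+2th)\le1$ contained in \assref{ass:Ltht1} is exactly what absorbs the resulting self-referential term and recovers $\norm{q^k-(x+khv)}_l\le(t^2+L(t^2+2th))\max(\norm{x}_l,\norm{x+tv}_l)+L'(t^2+2th)$. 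The velocity bound is obtained the same way, telescoping the second line of one step of pHMC (equivalently, writing $v^k$ as a forced rotation), using $\abs{\sin h}\le h$ and $1-\cos h\le h^2/2$ and inserting the just-established bound on $\max_{s\le t}\norm{q_s}_l$, the residual $\norm{v}_l$- and $\norm{v_s}_l$-contributions being absorbed via $t\le1$. The ``especially'' estimates then follow from the triangle inequality and \assref{ass:Ltht1}: it gives $1+t^2+L(t^2+2th)\le2$ and, since $t\le1$, also $\max(\norm{x}_l,\norm{x+tv}_l)\le\norm{x}_l+\norm{v}_l$ and $t^2+2th\le3$, so both right-hand sides take the form $K(1+\norm{x}_l+\norm{v}_l)$ with $K$ depending only on $L,L'$.

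The main obstacle is precisely this coupling: the force $C\Dif\Phi(q^j)$ feeds back on the norms being estimated, so every inequality is implicitly self-referential, and one must check that the smallness built into \assref{ass:Ltht1} — together with the precise trigonometric constants $t^2$ and $t^2+2th$ — is just enough to close the induction with the clean, $N$-independent constants claimed, rather than the exponential-in-$t$ constant a naive discrete Grönwall argument would produce. A secondary, purely bookkeeping, point is that $\norm{v}_l$ must be traded for $\max(\norm{x}_l,\norm{x+tv}_l)$ in every term, which is why the convexity rewriting $khv=(x+khv)-x$ is used repeatedly.
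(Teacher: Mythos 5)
Your proof is essentially correct, but it takes a genuinely different route from the paper. The paper does not work with the discrete recursion at all: it passes to the \emph{nonlinear} interpolation (\ref{equ:ode_discr_dynamics_nonlinear}), writes $q_t - q_0 = \int_0^t v_r\,\dif r + (\text{force terms})$, eliminates the velocity by a second integration $\int_0^t v_r \dif r = tv_0 + \int_0^t\int_0^r(-q_u - \text{force})\dif u\,\dif r$, and closes the resulting self-referential bound on $q_t^*$ by absorption using \assref{ass:Ltht1}. You instead reduce to grid points (valid for the linear interpolation (\ref{equ:ode_discr_dynamics_linear}) referenced in the statement, since the norm of an affine path is convex on each subinterval), derive the second-order recursion $q^{k+1}-2\cos(h)q^k+q^{k-1}=-h\sin(h)C\Dif\Phi(q^k)$, solve it explicitly by discrete variation of constants, and close by induction on $k$. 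Your recursion and its explicit solution check out (the discrete Green's function $\sin((k-j)h)$ satisfies the homogeneous recursion, and the initial conditions match the one-step formula from \remref{rem:justification_DeltaH}), your coefficient bookkeeping $\tfrac{h}{2}\abs{\sin(kh)} + h\sum_{j<k}\abs{\sin((k-j)h)} \le \tfrac12(t^2+2th)$ is correct, and the trade $khv=(x+khv)-x$ with $\abs{\cos\theta - \sin\theta/\theta}+\abs{\sin\theta/\theta - 1}\le\theta^2$ correctly eliminates $\norm{v}_l$ from the position bound. What your approach buys is an explicit, non-self-referential representation of $q^k$ (the induction at step $k$ only uses indices $<k$), at the cost of more discrete combinatorics; the paper's continuous formulation buys a one-line elimination of $v$ and, crucially, a velocity equation with no $v$ on the right-hand side.

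That last point is where your sketch is slightly off. The paper's interpolated velocity satisfies $\dod{v_t}{t} = -q_t - \tfrac{\cos(\bt{t})}{2}C\Dif\Phi(q_{\floor{t}}) - \tfrac12 C\Dif\Phi(q_{\ceil{t}})$, which contains no $v$, so integrating gives \emph{exactly} $\max_{s\le t}\norm{v_s - v}_l \le (1+L)t\max_{s\le t}\norm{q_s}_l + L't$. Your telescoping of the discrete update retains the term $(\cos(h)-1)v^j$ (equivalently, the forced-rotation form retains $(\cos(kh)-1)v$), which contributes an extra $O(th\norm{v}_l)$ (resp.\ $O(t^2\norm{v}_l)$) that is \emph{not} present in the stated intermediate inequality. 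Saying it is ``absorbed via $t\le1$'' does not make it vanish; you either get a strictly weaker constant or must launder $t\norm{v}_l$ back through $\max\{\norm{x}_l,\norm{x+tv}_l\}$ and then through $\max_s\norm{q_s}_l$, which inflates the constant $(1+L)t$. This is harmless for the final ``especially'' bounds of the form $K(1+\norm{x}_l+\norm{v}_l)$, but the exact constants of the first velocity inequality are reused downstream (e.g.\ in \lemref{lemma:zw_prioribounds} and (\ref{equ:same_vel_wbound})), so if you want to reproduce them you should either switch to the paper's interpolation for the velocity step or track the extra term explicitly.
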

	\begin{proof}
		We assume $t \in h\bb{Z}$ and therefore $t = \floor{t} = \ceil{t}$. We use the nonlinear interpolation resulting in dynamics (\ref{equ:ode_discr_dynamics_nonlinear}). Let $q_t = q_t(x, v)$ and $v_s = v_s(x, v)$. Also let $\bt{t} = t - \floor{t}$ and similarly for $r, u$. Finally define $q_t^* = \max_{s \le t}\norm{q_s}_l$. The maximum is well defined since $q_s$ is continuous and $\intcc{0, t}$ is compact. Now we calculate
		\begin{align*}
			q_t - q_0 &= \int_0^t v_r \dif r
			+ \int_0^t - \frac{\sin(\bt{r})}{2} C\Dif\Phi(q_{\floor{r}})
			+ \frac{\bt{r}}{2}C \Dif \Phi(q_{\ceil{r}}) \dif r
		\end{align*}
		and eliminate $v$ by calculating
		\begin{align*}
			\int_0^t v_r \dif r= tv_0 
			+ \int_0^t \int_0^r -q_u - \frac{\cos(\bt{u})}{2}C\Dif\Phi(q_{\floor{u}}) - \frac{1}{2}C\Dif\Phi(q_{\ceil{u}}) \dif u \dif r.
		\end{align*}
		We now see that
		\begin{align*}
			&\norm{q_t - q_0 - tv_0}_l \le \frac{1}{2}t^2(q_t^*(1 + L) + L') + th(Lq_t^* + L') \\
			=& \frac{1}{2} q_t^* (t^2 + L(t^2 + 2th)) +  \frac{1}{2}L'(t^2 + 2th)\\
			\le& \frac{1}{2}\left(\max_{s \le t}{\norm{q_s - q_0 - sv_0}}_l + \max_{s \le t}\norm{q_0 + sv_0}_l\right)(t^2 + L(t^2 + 2th))) \\
			&+ \frac{1}{2}L'(t^2 + 2th),
		\end{align*}
		where we used that $\sin(\tilde{r}), \cos(\tilde{u}) \le 1$ and (\ref{equ:DPhiGrowthBound}) in \assref{ass:DPhiBoundL}. Now we use \assref{ass:Ltht1} to conclude that
		\begin{equation*}
			\max_{s \le t}\norm{q_t - q_0 - tv_0}_l \le (t^2 + L(t^2 + 2th))\max\left\{\norm{q_0}_l, \norm{q_0 + tv_0}_l\right\} + L'(t^2 + 2th)
		\end{equation*}
		and
		\begin{align*}
			q_t^* \le& (1 + (t^2 + L(t^2 + 2th)))\max\left\{\norm{q_0}_l, \norm{q_0 + tv_0}_l\right\} + L'(t^2 + 2th) \\
			\le& 2 \max\left\{\norm{q_0}_l, \norm{q_0 + tv_0}_l\right\} + L'(t^2 + 2th)
		\end{align*}
		where we used \assref{ass:Ltht1}. 
		Now we calculate for $v_s$ that
		\begin{align*}
			\norm{v_s - v_0}_l \le&(1 + L)tq_t^* + L't\\
			\le& (1+L)t(1 + t^2 + L(t^2 + 2th))\max\left\{\norm{q_0}_l, \norm{q_0 + tv_0}_l\right\} \\
			&+L'((1+ L)(t^3 + 2t^2h) + t)
		\end{align*}
		and
		\begin{align*}
			\norm{v_s}_l \le& \norm{v_0}_l + 2t(1+L)\max\left\{\norm{q_0}_l, \norm{q_0 + tv_0}_l\right\} \\
			&+ L'((1+ L)(t^3 + 2t^2h) + t).
		\end{align*}
		To get the last two inequalities, first upper bound $h$ by $t$ and then note that by \assref{ass:Ltht1} we can upper bound $t$ by a constant depending only on $L$.
	\end{proof}
	The proof of the next lemma works similarly. But instead of (\ref{equ:DPhiGrowthBound}) we can directly use (\ref{equ:DPhiDifferencebound}) since we are treating differences all the time. That spares us the $L'$ terms.
	We will often deal with the term 
	\begin{equation}
		\label{equ:As}
		A_s := \Dif\Phi(q_s(x, v)) - \Dif\Phi(q_s(y, u)),
	\end{equation}
	
	and therefore introduce the shorthand notation $A_s$.
	\begin{lemma}
		\label{lemma:zw_prioribounds}
		Assume that \assref{ass:Ltht1} holds.
		Let $x,y, u, v \in \bb{R}^N$ be arbitrary. Let $q_t(x, v), v_t(x, v)$ fulfil to (\ref{equ:ode_discr_dynamics_linear}) with initial conditions $x, v$. Then if $t \in h\bb{Z}$
		\begin{align*}
			&\max_{s \le t} \norm{q_s(x, v) - q_s(y, u) - (x - y) - s(v - u)}_l \\
			\le& (t^2 + L(t^2 + 2th))\max\left\{\norm{x - y}_l, \norm{x - y + t(u - v)}_l\right\}
		\end{align*}
		and
		\begin{equation}
			\label{equ:priori_bound_ws}
			\begin{aligned}
				&\max_{s \le t} \norm{v_s(x, v) - v_s(y, u) - (v - u)}_l\\
				\le& t(1+L)(1 + t^2 + L(t^2 + 2th))\max\left\{\norm{x - y}_l, \norm{x - y + t(v - u)}_l\right\}
			\end{aligned}
		\end{equation}
	\end{lemma}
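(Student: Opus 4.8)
The plan is to run the argument of \lemref{lemma:qv_prioribounds} on the \emph{difference} of the two trajectories. Since $t \in h\bb{Z}$, the interpolated states at integer multiples of $h$ are the genuine pHMC iterates and do not depend on which interpolation is used, so I would again work with the nonlinear interpolation and the dynamics (\ref{equ:ode_discr_dynamics_nonlinear}). Writing $\delta q_s := q_s(x,v) - q_s(y,u)$ and $\delta v_s := v_s(x,v) - v_s(y,u)$, subtracting the two copies of (\ref{equ:ode_discr_dynamics_nonlinear}) shows that $(\delta q_s, \delta v_s)$ solves the same linear-in-state system, but with the forcing terms $C\Dif\Phi(q_{\floor{s}})$, $C\Dif\Phi(q_{\ceil{s}})$ replaced by the differences $C A_{\floor{s}}$, $C A_{\ceil{s}}$ (with $A_s$ as in (\ref{equ:As})). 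The essential improvement over \lemref{lemma:qv_prioribounds} is that this forcing is now controlled linearly: by (\ref{equ:DPhiDifferencebound}) in \assref{ass:DPhiBoundL}, $\norm{C A_s}_l \le L\norm{\delta q_s}_l$, with no additive $L'$ contribution — which is exactly why the $L'$ terms disappear from the bounds.

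Next I would integrate the $\delta q$-equation once and eliminate $\delta v$ by integrating the $\delta v$-equation a second time, giving
\begin{equation*}
\delta q_t - \delta q_0 - t\delta v_0 = \int_0^t\!\!\int_0^r\!\Bigl(-\delta q_u - \tfrac{\cos(\bt{u})}{2}CA_{\floor{u}} - \tfrac12 CA_{\ceil{u}}\Bigr)\dif u\,\dif r + \int_0^t\!\Bigl(-\tfrac{\sin(\bt{r})}{2}CA_{\floor{r}} + \tfrac{\bt{r}}{2}CA_{\ceil{r}}\Bigr)\dif r .
\end{equation*}
Set $\delta q_t^{*} := \max_{s \le t}\norm{\delta q_s}_l$. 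Using $\norm{\delta q_u}_l \le \delta q_t^{*}$ and $\norm{CA_{\floor{u}}}_l, \norm{CA_{\ceil{u}}}_l \le L\delta q_t^{*}$ — here the hypothesis $t \in h\bb{Z}$ is precisely what guarantees $\ceil{u} \le t$ for all $u \le t$, so that $\delta q_t^{*}$ dominates every forcing term — together with $|\sin(\bt{r})|, \bt{r} \le h$, and running the same estimate with $t$ replaced by any $s \le t$, one obtains $\max_{s\le t}\norm{\delta q_s - \delta q_0 - s\delta v_0}_l \le \tfrac12\,\delta q_t^{*}\,(t^2 + L(t^2+2th))$. Bounding $\delta q_t^{*}$ by this quantity plus $\max_{s\le t}\norm{\delta q_0 + s\delta v_0}_l = \max\{\norm{x-y}_l, \norm{x-y+t(v-u)}_l\}$ and using \assref{ass:Ltht1} to absorb the prefactor $\tfrac12(t^2+L(t^2+2th)) \le \tfrac12$, one gets $\delta q_t^{*} \le 2\max\{\norm{x-y}_l,\norm{x-y+t(v-u)}_l\}$; feeding this back into the previous bound yields the first claimed inequality.

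For the velocity estimate I would integrate the $\delta v$-equation directly, using $\tfrac{\cos(\bt{r})}{2}+\tfrac12 \le 1$, to get $\norm{\delta v_s - \delta v_0}_l \le s(1+L)\delta q_t^{*} \le t(1+L)\delta q_t^{*}$ for $s \le t$, and then insert the sharper bound $\delta q_t^{*} \le \bigl(1 + t^2 + L(t^2+2th)\bigr)\max\{\norm{x-y}_l,\norm{x-y+t(v-u)}_l\}$, which follows from $\delta q_t^{*} \le \max\{\norm{x-y}_l,\norm{x-y+t(v-u)}_l\} + \max_{s\le t}\norm{\delta q_s - \delta q_0 - s\delta v_0}_l$ and the first inequality already proved. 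This gives (\ref{equ:priori_bound_ws}). I do not expect a genuine obstacle here: the whole proof is bookkeeping parallel to \lemref{lemma:qv_prioribounds}, and the one place demanding care is the two-fold integration combined with the observation that $t \in h\bb{Z}$ is exactly the condition under which $q_{\ceil{u}}$ for $u \le t$ never leaves the window over which the supremum $\delta q_t^{*}$ is taken. (The statement as printed has a harmless sign typo: $\max\{\norm{x-y}_l,\norm{x-y+t(u-v)}_l\}$ and $\max\{\norm{x-y}_l,\norm{x-y+t(v-u)}_l\}$ are not the same, and the quantity that the argument naturally produces is the one involving $\delta v_0 = v - u$.)
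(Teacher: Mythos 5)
Your proposal is correct and follows essentially the same route as the paper: subtract the two copies of the nonlinear interpolation to get the system (\ref{equ:dynamics_zt_wt}) for $z_t, w_t$, eliminate $w$ by a double integration, bound the forcing via (\ref{equ:DPhiDifferencebound}) (which is exactly why no $L'$ appears), close the estimate for $z_t^*$ self-referentially using \assref{ass:Ltht1}, and then read off the velocity bound from a single integration of the $w$-equation. Your side remark about the $u-v$ versus $v-u$ discrepancy is also right: the paper's own computation produces $\max\{\norm{z_0}_l,\norm{z_0+tw_0}_l\}$ with $w_0=v-u$, so the first displayed inequality in the statement carries a harmless sign typo.
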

	\begin{proof}
		The proof is similar to the one of the last lemma. Let $z_t = q_t(x, v) - q_t(y, u)$, $w_t = v_t(x, v) - v_t(y, u)$, $z_t^* = \max_{s \le t}\norm{z_s}_l$, and $w_t^* = \max_{s \le t}\norm{w_s}_l$. Observe that $z_t$ and $w_t$ fulfil differential equations similar to the ones for $q$ and $v$:
		\begin{equation}
			\label{equ:dynamics_zt_wt}
			\begin{cases}
				\dod{z_t}{t} &= 
				w_t 
				- \frac{\sin(\bt{t})}{2} CA_{\lfloor t \rfloor}
				+ \frac{\bt{t}}{2}C A_{\lceil t \rceil} \\
				\dod{w_t}{t} &= -z_t - \frac{\cos(\bt{t})}{2}CA_{\lfloor t \rfloor} - \frac{1}{2}CA_\ceil{t},
			\end{cases}
		\end{equation}
		where $A_s$ is defined in (\ref{equ:As}).
		Now we proceed similar as before and therefore need to calculate
		\begin{align*}
			\int_0^t w_s \dif s =& 
			tw_0 + \int_0^t \int_0^r - z_u - \frac{\cos(\bt{u})}{2}CA_{\lfloor u \rfloor} - \frac{1}{2}CA_\ceil{u} \dif u \dif r.
		\end{align*}
		Therefore
		\begin{align*}
			\norm{z_t - z_0 - tw_0}_l \le \frac{1}{2}t^2 (1 + L)z_t^* + thLz_t^* = \frac{1}{2}z_t^*(t^2 + L(t^2 + 2th))
		\end{align*}
		and
		\begin{align*}
			\max_{s \le t} \norm{z_s - z_0 - rw_0}_l  \le (t^2 + L(t^2 + 2th))\max\left\{\norm{z_0}_l, \norm{z_0 + tw_0}_l\right\}.
		\end{align*}
		For $w_s$ we have
		\begin{align*}
			\max_{s \le t}\norm{w_s - w_0}_l \le& t(1+L)z_t^* \\
			\le& t(1+L)(1 + t^2 + L(t^2 + 2th))\max\left\{\norm{z_0}_l, \norm{z_0 + tw_0}_l\right\}
		\end{align*}
		which concludes the proof.
	\end{proof}
	We are especially interested in the case where $v = u$ since this is how we couple the dynamics. For that case the above inequalities imply
	\begin{align}
		\label{equ:same_vel_zbound}
		&\max_{s \le t} \norm{q_s(x, v) - q_s(y, v) - (x - y)}_l 
		\le (t^2 + L(t^2 + 2th))\norm{x - y}_l
	\end{align}
	and
	\begin{align}
		\label{equ:same_vel_wbound}
		&\max_{s \le t} \norm{v_s(x, v) - v_s(y, v)}_l
		\le 2t(1+L)\norm{x - y}_l
	\end{align}
	if \assref{ass:Ltht1} holds. 
	Now we come to the first bigger result of this section: the numerical approximation of the Hamiltonian dynamics are indeed contractive on small time intervals if they have the same initial velocity.
	\begin{lemma}
		\label{lemma:contraction}
		Suppose that \assref{ass:convexity}, \assref{ass:DPhiBoundL}, \assref{ass:DPhi2BoundM} and \assref{ass:LthtzetaL} hold. Then
		\begin{equation*}
			\norm{q_t(x, v) - q_t(y, v)}_l^2 \le (1 - \frac{1}{12}\zeta t^2)\norm{x - y}_l^2
		\end{equation*}
		for any $x, y \in \mathcal{H}^l$ such that
		\begin{equation}
			\label{equ:ass_size_bound_x_v}
			(1 + \norm{x}_l + \norm{v}_l)h \le \frac{\zeta}{K}
		\end{equation}
		where $K \in \intoo{0, \infty}$ is a constant depending only on $C, L, L'$ and $M$. 
	\end{lemma}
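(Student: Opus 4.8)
The plan is to reduce to the difference process. Put $z_t := q_t(x,v) - q_t(y,v)$ and $w_t := v_t(x,v) - v_t(y,v)$; since the two copies share the velocity, $z_0 = x-y$, $w_0 = 0$, and $(z_t,w_t)$ satisfies the system (\ref{equ:dynamics_zt_wt}) with forcing $A_s = \Dif\Phi(q_s(x,v)) - \Dif\Phi(q_s(y,v))$ from (\ref{equ:As}). Writing $g(t) := \norm{z_t}_l^2$, the goal is $g(t) \le (1 - \tfrac1{12}\zeta t^2)\,g(0)$. Because $z$ is continuous and piecewise continuously differentiable, $g(t) = g(0) + 2\int_0^t \sprod{z_s}{\dot z_s}_l\,ds$; I would substitute the first equation of (\ref{equ:dynamics_zt_wt}) for $\dot z_s$ and, using $w_0 = 0$ together with the second equation, the identity $w_s = \int_0^s \dot w_r\,dr$, so as to re-express the integral in terms of the ``restoring force'' $\dot w_r$.

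The contraction comes from the following mechanism: up to a discretization error that is small once $h$ is small, $\dot w_r = -(z_r + CA_r)$, and \assref{ass:PhiConvex} applied to the pair $q_r(x,v), q_r(y,v)$ gives $\sprod{z_r + CA_r}{z_r}_l \ge \zeta\norm{z_r}_l^2$ (note $z_r = q_r(x,v) - q_r(y,v)$ and $CA_r = C\Dif\Phi(q_r(x,v)) - C\Dif\Phi(q_r(y,v))$). This yields a leading contribution $\le -2\zeta\int_0^t\!\int_0^s\norm{z_r}_l^2\,dr\,ds$, while the rest — replacing $z_s$ by $z_r$ inside inner products, the $\norm{w_s}_l^2$-type terms coming from differentiating, and the discretization gap — is lower order. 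To make this precise I would use the a priori bounds of \lemref{lemma:zw_prioribounds} (with $u=v$), which under \assref{ass:Ltht1} (a consequence of \assref{ass:LthtzetaL} and $\zeta \le L+1$) bound $\norm{z_s}_l$, $\norm{w_s}_l$ and $\norm{\dot z_s}_l$ in terms of $\norm{z_0}_l$, together with the Lipschitz bound (\ref{equ:DPhiDifferencebound}) on $C\Dif\Phi$, and a short continuity/bootstrap argument on $g(r)\le\norm{z_0}_l^2$ that sharpens the constants and, via the $2hT$ term in \assref{ass:LthtzetaL}, keeps $\norm{z_r}_l$ comparable to $\norm{z_0}_l$ (with a sharper-than-\lemref{lemma:zw_prioribounds} estimate obtained directly from (\ref{equ:dynamics_zt_wt}) where needed). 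After substituting these bounds the leading term contributes roughly $-\tfrac14\zeta t^2\norm{z_0}_l^2$, the remainder is of order $(1+L)^2 t^4\norm{z_0}_l^2$, and since \assref{ass:LthtzetaL} forces $(1+L)^2 t^2 \le \zeta$, the remainder is at most $+\tfrac16\zeta t^2\norm{z_0}_l^2$, giving a net second-order coefficient $-\tfrac14+\tfrac16 = -\tfrac1{12}$.

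For $h = 0$ the force is exactly $-(z_r + CA_r)$ and this is essentially the whole argument; for $h > 0$ the new ingredient is the control of the discretization error, i.e. of the difference between $CA_r$ and the grid values $CA_{\floor{r}}, CA_{\ceil{r}}$ actually appearing in (\ref{equ:dynamics_zt_wt}). Bounding this by the Lipschitz constant of $C\Dif\Phi$ alone is too weak — it would leave an error linear, rather than quadratic, in $\norm{z_0}_l$ — so I would instead write each such difference as a telescoping of second differences of $C\Dif\Phi$ along the two trajectories, expand to first order with remainder controlled by the $l\to l$ Lipschitz bound (\ref{equ:lipsch_statement_for_CD2}) on $C\Dif^2\Phi$ and the boundedness of $\Dif^2\Phi$ from \assref{ass:DPhi2BoundM}, and estimate the one-step increments $q_r(\cdot,v) - q_{\floor{r}}(\cdot,v)$ by $h$ times the velocity growth bound of \lemref{lemma:qv_prioribounds}, which is of order $h(1+\norm{x}_l+\norm{y}_l+\norm{v}_l)$. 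This is exactly where the hypothesis $(1+\norm{x}_l+\norm{v}_l)h \le \zeta/K$ enters: it makes every discretization term, which has the shape $h(1+\norm{\cdot}_l)\,\norm{z_0}_l^2$, negligible against $\zeta t^2\norm{z_0}_l^2$ provided $K$ is chosen large enough in terms of $C,L,L',M$. I expect the main obstacle to be precisely this bookkeeping — keeping \emph{every} error term quadratic in $\norm{z_0}_l$ (which forces genuine use of the second-derivative bounds, not merely the Lipschitz bound on $C\Dif\Phi$) while tracking the numerical constants sharply enough, and exploiting the $2hT$ slack in \assref{ass:LthtzetaL}, to land at the coefficient $\tfrac1{12}$ rather than something weaker.
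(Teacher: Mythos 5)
Your overall strategy is the paper's: pass to the difference process $(z_t,w_t)$ of (\ref{equ:dynamics_zt_wt}), extract a restoring term $-2\zeta\norm{z_r}_l^2$ from \assref{ass:PhiConvex}, treat the $\norm{w_s}_l^2$ contribution and the grid/interpolation mismatch as remainders controlled via \lemref{lemma:qv_prioribounds}, \lemref{lemma:zw_prioribounds}, the second-derivative bounds of \assref{ass:DPhi2BoundM}, and the smallness hypothesis (\ref{equ:ass_size_bound_x_v}). But there is a concrete gap at the step where you convert the leading contribution $-2\zeta\int_0^t\int_0^s\norm{z_r}_l^2\,\dif r\,\dif s$ into $-c\,\zeta t^2\norm{z_0}_l^2$. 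That conversion needs a pointwise \emph{lower} bound $\norm{z_r}_l^2\gtrsim\norm{z_0}_l^2$, and the bootstrap you invoke, $g(r)\le\norm{z_0}_l^2$, is an upper bound; it controls the error terms but says nothing about how large the helpful negative term is. Moreover, your bookkeeping of constants does not close: the $2\norm{w_s}_l^2$ term alone, via (\ref{equ:same_vel_wbound}), integrates to $\tfrac23(1+L)^2t^4\,{z_t^*}^2\le\tfrac23\zeta t^2\,{z_t^*}^2$, which already exceeds your claimed total remainder of $\tfrac16\zeta t^2$ and would swamp a leading term of only $-\tfrac14\zeta t^2$. To make your route work you would need the leading coefficient to be essentially $-\zeta t^2$, which requires the $r$-dependent reverse estimate $\norm{z_r}_l\ge\bigl(1-(r^2+L(r^2+2rh))\bigr)\norm{z_0}_l$ from \lemref{lemma:zw_prioribounds} applied at each intermediate time, not just the crude worst-case bound at time $t$.

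The paper sidesteps this entirely: it sets $a=\norm{z_t}_l^2$, $b=2\sprod{z_t}{w_t}_l$, observes $\dot a=b+\delta$, $\dot b=-2\zeta a+\beta$ with $b(0)=0$, and solves this linear system exactly by variation of constants, so that the homogeneous part contributes $\cos(\sqrt{2\zeta}\,t)\norm{z_0}_l^2\le(1-\zeta t^2+\tfrac16\zeta^2t^4)\norm{z_0}_l^2$ with no lower bound on $a(r)$ needed; the full $-\zeta t^2$ then absorbs the $\tfrac16\zeta^2t^4\le\tfrac16\zeta t^2$ and $\tfrac23\zeta t^2$ remainders (plus the $O(h)$ terms killed by (\ref{equ:ass_size_bound_x_v})), leaving $-\tfrac1{12}\zeta t^2$ after the final contradiction argument that replaces ${z_{\ceil{t}}^*}^2$ by $\norm{z_0}_l^2$. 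I recommend either adopting that device or supplying the intermediate-time lower bound explicitly; as written, your sketch does not establish the stated constant.
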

	\begin{proof}
		We define $a(t) = \norm{z_t}_l^2$ and $b(t) = 2\sprod{z_t}{w_t}_l$, where $z_t$ and $w_t$ are defined in (\ref{equ:dynamics_zt_wt}). We see
		\begin{align*}
			\dod{a(t)}{t} =& 2\sprod{z_t}{w_t}_l 
			-{\sin(\bt{t})}\sprod{z_t}{CA_{\lfloor t \rfloor}}_l + \tilde{t}\sprod{z_t}{C A_{\lceil t \rceil}}_l \\
			=& b(t) + \delta(t) \\
			\dod{b(t)}{t} =& -2\sprod{z_t}{z_t}_l  -{\cos(\bt{t})}\sprod{z_t}{CA_{\lfloor t \rfloor}}_l - \sprod{z_t}{CA_\ceil{t}}_l \\
			&+ 2\sprod{w_t}{w_t}_l -{\sin(\bt{t})}\sprod{w_t}{CA_{\lfloor t \rfloor}}_l + \tilde{t}\sprod{w_t}{C A_{\lceil t \rceil}}_l \\
			=& -\gamma(t) + \eta(t) + 2\norm{w_t}_l^2 + \epsilon(t)
		\end{align*}
		where
		\begin{align*}
			\delta(t) =& -{\sin(\bt{t})}\sprod{z_t}{CA_{\lfloor t \rfloor}}_l + \tilde{t}\sprod{z_t}{C A_{\lceil t \rceil}}_l \\
			\gamma(t) =& 2\sprod{z_t}{z_t}_l + 2\sprod{z_t}{CA_t}_l\\
			\eta(t) = & -{\cos(\bt{t})}\sprod{z_t}{CA_{\lfloor t \rfloor}}_l - \sprod{z_t}{CA_\ceil{t}}_l + 2\sprod{z_t}{CA_t}_l \\
			\epsilon(t) =&-{\sin(\bt{t})}\sprod{w_t}{CA_{\lfloor t \rfloor}}_l + \tilde{t}\sprod{w_t}{C A_{\lceil t \rceil}}_l
		\end{align*}
		and $A_s$ is defined in (\ref{equ:As}).
		By \assref{ass:PhiConvex} it holds that $\gamma(t) \ge 2\zeta a(t)$. Therefore,
		\begin{equation*}
			\dod{b(t)}{t} = -2\zeta a(t) + \beta(t)
		\end{equation*}
		with
		\begin{equation}
			\label{equ:definition_beta_proof}
			\beta(t) \le \eta(t) + 2\norm{w_t}_l^2 + \epsilon(t).
		\end{equation}
		This leads us to the initial value problem
		\begin{equation*}
			\begin{cases}
				\dod{a(t)}{t} &= b(t) + \delta(t) \\
				\dod{b(t)}{t} &= -2\zeta a(t) + \beta(t)
			\end{cases}
		\end{equation*}
		with initial conditions $a(0) = \norm{z_0}_l^2$ and $b(0) = 0$. It has the unique solution 
		\begin{equation}
			\label{equ:solution_ode_a}
			\begin{aligned}
				a(t) =& \cos\left(\sqrt{2\zeta} t\right)\norm{z_0}_l^2 + \int_0^t \cos\left(\sqrt{2\zeta}(t - r)\right)\delta(r)\dif r \\
				&- \frac{1}{\sqrt{2\zeta}} \sin\left(\sqrt{2\zeta}t\right)\delta(0) + \int_0^t \frac{1}{\sqrt{2\zeta}} \sin\left(\sqrt{2\zeta}(t - r)\right)\beta(r)\dif r
			\end{aligned}
		\end{equation}
		For $h=0$, the case where we use the exact dynamics, $\delta$ and $\beta$ vanish. For $h > 0$ we will now bound $\delta, \epsilon$ and $\beta$. We start by bounding $-{\sin(\bt{t})} CA_{\lfloor t \rfloor}
		+ {\bt{t}}C A_{\lceil t \rceil}$ to bound $\delta(t)$ and $\epsilon(t)$. Note that
		\begin{align*}
			&\bt{t}C \Dif \Phi(z_\ceil{t})-\sin(\bt{t}) CA_\floor{t} \\
			=& \bt{t}(C A_{\lceil t \rceil}-CA_{\lfloor t \rfloor}) + (\bt{t} - \sin(\bt{t})) CA_{\lfloor t \rfloor}
		\end{align*}
		For the first term we denote $q_s(x, v)$ by $x_s$ and $q_s(y, v)$ by $y_s$ and calculate
		\begin{equation}
			\label{equ:calculation_DPhi_borderterms}
			\begin{aligned}
				&\norm{C A_{\lceil t \rceil}-CA_{\lfloor t \rfloor}}_l \\
				=& \norm{C\Dif\Phi(x_\ceil{t}) - C\Dif\Phi(x_\floor{t}) - (C\Dif\Phi(y_\ceil{t}) - C\Dif\Phi(y_\floor{t}))}_l\\
				=& \norm{\int_\floor{t}^\ceil{t} C\Dif^2\Phi(x_r)\dot{x_r} - C\Dif^2\Phi(y_r)\dot{y_r} \dif r}_l \\
				=& \norm{\int_\floor{t}^\ceil{t} (C\Dif^2\Phi(x_r) - C\Dif^2\Phi(y_r))\dot{x_r} + C\Dif^2\Phi(y_r)\dot{z_r} \dif r}_l \\
				\le& Mhz_\ceil{t}^*\left(v_\ceil{t}^* + h\left(Lx_\ceil{t}^* + L'\right)\right) + Lh\left(w_\ceil{t}^* + hLz_\ceil{t}^*\right)
			\end{aligned}
		\end{equation}	
		where we used \assref{ass:DPhiBoundL} and \assref{ass:DPhi2BoundM}. 
		
		For the second term note that we have
		\begin{equation}
			\label{equ:x_sin_difference_bound}
			\abs{h - \sin(h)} = h - \sin(h) \le \frac{1}{6}h^3
		\end{equation}
		since the first two derivatives at $0$ of $h - \sin(h)$ and $\frac{1}{6}h^3$ coincide at $0$ whereas the third derivatives are $\cos(h)$ and $1$ respectively and $\cos(h) \le 1$. 
		With that we can bound $\delta(t)$ by
		\begin{align*}
			\norm{\delta(t)}_l \le& z_t^*\left(h\norm{C A_\ceil{t}-CA_\floor{t}}_l + \frac{1}{6}h^3\norm{CA_\floor{t}}_l\right)\\
			\le& h^2z_\ceil{t}^*\left(Mz_\ceil{t}^*\left(v_\ceil{t}^* + h\left(Lx_\ceil{t}^* + L'\right)\right) + L\left(w_\ceil{t}^* + hLz_\ceil{t}^*\right)\right) \\
			&+ \frac{1}{6} h^3 L {z_\ceil{t}^*}^2 \\
			\le& h^2 {z_\ceil{t}^*}^2 \left(M\left(v_\ceil{t}^* + h\left(Lx_\ceil{t}^* + L'\right)\right) + L\left(2t(L+1) + hL\right)\right) \\
			&+ \frac{1}{6}h^2{z_\ceil{t}^*}^2\left(hL\right),
		\end{align*}
		where we used that $w_t^* \le 2t(L+1)z_t^*$ by \lemref{lemma:zw_prioribounds} and especially (\ref{equ:same_vel_wbound}).
		Similarly we bound $\epsilon(t)$ by
		\begin{align*}
			\norm{\epsilon(t)}_l \le& w_t^*\left(h\norm{C A_\ceil{t}-CA_\floor{t}}_l + \frac{1}{6}h^3\norm{CA_\floor{t}}_l\right)\\
			\le& h^2w_\ceil{t}^*\left(Mz_\ceil{t}^*\left(v_\ceil{t}^* + h\left(Lx_\ceil{t}^* + L'\right)\right) + L\left(w_\ceil{t}^* + hLz_\ceil{t}^*\right)\right) \\
			&+ \frac{1}{6} h^3 L w^*_\ceil{t}{z_\ceil{t}^*}\\
			\le& 2t(L+1)h^2{z_\ceil{t}^*}^2\left(M\left(v_\ceil{t}^* + h\left(Lx_\ceil{t}^* + L'\right)\right) + L\left(2t(L+1) + hL\right)\right) \\
			&+ \frac{1}{3}t(L+1)h^2{z_\ceil{t}^*}^2 \left( h L \right),
		\end{align*}
		which is just the bound of $\norm{\delta(t)}_l$ multiplied by $2t(L+1)$.
		Now the first integral in (\ref{equ:solution_ode_a}) can be bounded by
		\begin{align*}
			&\int_0^t \cos(\sqrt{2\zeta}(t-r)) \delta(r) \dif r \le t \delta^*(t),
		\end{align*}
		where $\delta^*(t) = \max_{s \le t} \abs{\delta(t)}$.
		To bound the second integral in (\ref{equ:solution_ode_a}) we need to bound $\beta$ and therefore $\eta$. We calculate
		\begin{align*}
			&\norm{\cos(\bt{t})CA_\floor{t} + CA_\ceil{t} - 2CA_t}_l \\
			\le&\abs{\cos(\bt{t}) - 1}\norm{CA_\floor{t}}_l + \norm{CA_\floor{t} + CA_\ceil{t} - 2CA_t}_l \\
			\le& \frac{1}{2}h^2Lz_\ceil{t}^*+ Mhz_\ceil{t}^*\left(v_\ceil{t}^* + h\left(Lx_\ceil{t}^* + L'\right)\right) + Lh\left(w_\ceil{t}^* + hLz_\ceil{t}^*\right) \\
			\le& \frac{1}{2}hz_\ceil{t}^*(Lh) +  hz_\ceil{t}^*\left(M\left(v_\ceil{t}^* + h\left(Lx_\ceil{t}^* + L'\right)\right) + L\left(2t(L+1) + hL\right)\right).
		\end{align*}
		In the second inequality we used that $\abs{\cos(t)-1} \le \frac{h^2}{2}$ for the first term and the same bound which we derived in (\ref{equ:calculation_DPhi_borderterms}) for the second term. Since
		\begin{align*}
			&\norm{\eta(t)}_l \le z_\ceil{t}^* \norm{\cos(\bt{t})CA_\floor{t} + CA_\ceil{t} - 2CA_t}_l
		\end{align*}
		this directly implies a bound for $\eta^*(t)$.
		We observe that by (\ref{equ:definition_beta_proof}) and (\ref{equ:same_vel_wbound})
		\begin{align*}
			\beta(t) \le 8t^2(1+L)^2{z_t^*}^2 + \eta^*(t) + \epsilon^*(t),
		\end{align*}
		where $\eta^*(t) = \max_{s \le t}\abs{\eta(s)}$ and $\epsilon^*(t) = \max_{s \le t}\abs{\epsilon(s)}$.
		This leads to a bound for the integral containing $\beta$, given by 
		\begin{align*}
			&\int_0^t \frac{1}{\sqrt{2\zeta}}\sin(\sqrt{2\zeta}(t - r)) \beta(r) \dif r \\
			\le& 8(1+L)^2{z_t^*}^2\int_0^t (t - r)r^2 \dif r + \int_0^t (t - r) \dif r (\epsilon^*(t) + \eta^*(t)) \\
			=& \frac{2}{3}(1+L)^2 t^4 z_t^* + \frac{1}{2}t^2 (\epsilon^*(t) + \eta^*(t)).
		\end{align*}
		Finally note that $\delta(0) = 0$ and therefore the summand containing $\delta(0)$ in (\ref{equ:solution_ode_a}) vanishes.
		Now we put it all together to obtain
		\begin{align*}
			\norm{z_t}_l^2 = a(t) \le \cos(\sqrt{2\zeta}t)\norm{z_0}_l^2 + \left(\frac{2}{3}(1+L)^2t^4 + D_t\right){z_\ceil{t}^*}^2
		\end{align*}
		with $D_t$ given by
		\begin{align*}
			D_t =&\left(th^2 + t^3h^2(L+1) + \frac{1}{2}t^2h\right)\\
			&\quad \times \left(M\left(v_\ceil{t}^* + h\left(Lx_\ceil{t}^* + L'\right)\right) + L\left(2t(L+1) + hL\right)\right) \\
			&+ \left(\frac{1}{6}th^2 + \frac{1}{6}t^3h^2(L+1) + \frac{1}{4}t^2h \right)\left(hL\right)
		\end{align*}
		where we used that $w^*_t \le 2t(1+L)z_t^*$ by \lemref{lemma:zw_prioribounds} and the bounds for $\abs{\delta(t)}, \abs{\eta(t)}$ and $\abs{\epsilon(t)}$. 
		
		$v_t^*, x_t^*$ are both bounded from above by $K(1 + \norm{x}_l + \norm{v}_l)$ by \lemref{lemma:qv_prioribounds} for a constant $K$ depending only on $L$ and $L'$. We use that since \assref{ass:Ltht1} holds, we can upper bound $t$ by a constant depending only on $L$. We can do the same with $h$ since $h \le t$. Therefore there is a constant $K$, depending only on $L$, $L'$ and $M$ such that
		\begin{equation*}
			D_t \le K t^2h (1 + \norm{x}_l + \norm{v}_l).
		\end{equation*}
		We note that $\cos(\sqrt{2\zeta}t) \le 1 - \zeta  t^2 + \frac{1}{6}\zeta^2 t^4$. Furthermore by \assref{ass:LthtzetaL} we have that $\zeta^2t^4 \le (1+L)^2t^4 \le \zeta t^2$. Therefore
		\begin{equation}
			\label{equ:inequality_thatleadsto_contractionbound}
			\begin{aligned}
				\norm{z_t}_l^2 &\le (1 - \zeta t^2)\norm{z_0}_l^2 + \left( \frac{5}{6} \zeta t^2 + K t^2h (1 + \norm{x}_l + \norm{v}_l))\right){z_\ceil{t}^*}^2 \\
				&\le (1 - \zeta t^2)\norm{z_0}_l^2 +\frac{11}{12} \zeta t^2{z_\ceil{t}^*}^2
			\end{aligned}
		\end{equation}
		where we used (\ref{equ:ass_size_bound_x_v}). Note however that the $K$ in the assumptions of this lemma and the $K$ in the equation above differ by a factor of $\frac{1}{12}$.
		We claim that this already implies that
		\begin{equation}
			\label{equ:claim_that_we_have_contraction}
			\norm{z_t}_l^2 \le (1 - \frac{1}{12}\zeta t^2)\norm{z_0}_l^2.
		\end{equation}
		First assume that there is an $s$ such that $\norm{z_s}_l > \norm{z_0}_l$. Let $r \in \intcc{\floor{s}, \ceil{s}}$ such that $z_\ceil{s}^* = \norm{z_r}_l \ge \norm{z_s}_l > \norm{z_0}_l$. Such an $r$ exists since $z$ is continuous and $\intcc{\floor{s}, \ceil{s}}$ is compact. Then by (\ref{equ:inequality_thatleadsto_contractionbound}) we have that
		\begin{align*}
			\norm{z_r}_l^2 &\le (1 - \zeta t^2)\norm{z_0}_l^2 +\frac{11}{12} \zeta t^2{z_\ceil{r}^*}^2 = (1 - \zeta t^2)\norm{z_0}_l^2 +\frac{11}{12} \zeta t^2\norm{z_r}_l^2 \\
			&< (1 - \frac{1}{12}\zeta t^2)\norm{z_r}_l^2 < \norm{z_r}_l^2
		\end{align*}
		which is a contradiction. Therefore $\norm{z_t}_l \le \norm{z_0}_l$ for all $t$ that satisfy \assref{ass:LthtzetaL}. Especially ${z_t^*}^2 \le \norm{z_0}_l^2$ which implies (\ref{equ:claim_that_we_have_contraction}). 
	\end{proof}
	\begin{remark}
		\lemref{lemma:contraction} will be a key estimate in the proof of \thmref{thm:contraction_hs}. The alert reader will have noted that \lemref{lemma:contraction} makes an assumption on the size of  $\norm{v}_l$, while \thmref{thm:contraction_hs} does not. Therefore, for large $v$ we will have to default to the bound (\ref{equ:priori_bound_ws}) from \lemref{lemma:zw_prioribounds}.
	\end{remark}
	Knowing that the dynamics are contractive we will need to bound the probability that one of the proposals is accepted while the other one is not. The next lemma gives us the tools to bound these probabilities. It differs from the proof in \citet{bou2018coupling} where they directly bound the change $\od{H(q_t, v_t)}{t}$. Instead we use the formula for the energy error (\ref{equ:DeltaH_pHMC}) and write it as an approximation of an integral. The energy error will then be the approximation error for which bounds are known.
	\begin{lemma}
		\label{lemma:DH}
		Assume that \assref{ass:DPhiBoundL}, \assref{ass:DPhi2BoundM} and \assref{ass:Ltht1} hold. 
		Then there exists a constant $K_1 > 0$ depending only on $L, L'$ and $M$ such that
		\begin{equation}
			\label{equ:lemma_DH_DHBOUND}
			\abs{\Delta H(q_0, v_0)} \le K_1th^2\left(1 + \max\left\{\norm{q_0}_l, \norm{v_0}_l\right\}^3\right)
		\end{equation}
		and a constant $K_2 > 0$ depending only on $L, L', M$ and $N$ such that 
		\begin{align*}
			\abs{\partial_{(z, w)}\Delta H(q_0, v_0)}\le K_2th^2\max\{\norm{z}_l, \norm{w}_l\} (1  + \max\{\norm{q_0}_l, \norm{v_0}_l\}^3)
		\end{align*}
	\end{lemma}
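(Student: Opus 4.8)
The plan is to read the explicit formula \eqref{equ:DeltaH_pHMC} for $\Delta H$ as a \emph{quadrature error}. Let $(q_s,v_s)_{0\le s\le T}$ be the nonlinear interpolation \eqref{equ:ode_discr_interpolation_nonlinear}, so that it solves \eqref{equ:ode_discr_dynamics_nonlinear} and agrees with the iterates $(q^k,v^k)=\psi_h^k(q_0,v_0)$ at $s=kh$. Write $Q$ for the composite trapezoidal approximation $h\sum_{i=1}^{I-1}\sprod{\Dif\Phi(q^i)}{v^i}+\frac h2\bigl(\sprod{\Dif\Phi(q^0)}{v^0}+\sprod{\Dif\Phi(q^I)}{v^I}\bigr)$ of $\int_0^T\sprod{\Dif\Phi(q_s)}{v_s}\dif s$ on the grid $\{ih\}$ (it is exactly that, since $q_{ih}=q^i$). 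Using $\Phi(q^I)-\Phi(q^0)=\int_0^T\sprod{\Dif\Phi(q_s)}{\dot q_s}\dif s$ (chain rule on each step, then telescoping), \eqref{equ:DeltaH_pHMC} rearranges to
\begin{align*}
\Delta H(q_0,v_0)={}&\int_0^T\sprod{\Dif\Phi(q_s)}{\dot q_s-v_s}\dif s+\Bigl(\int_0^T\sprod{\Dif\Phi(q_s)}{v_s}\dif s-Q\Bigr)\\
&+\frac{h^2}{8}\bigl(\norm{C^{1/2}\Dif\Phi(q_0)}^2-\norm{C^{1/2}\Dif\Phi(q_I)}^2\bigr).
\end{align*}
Before estimating the three pieces I would record, from the explicit formulas \eqref{equ:ode_discr_interpolation_nonlinear}--\eqref{equ:ode_discr_dynamics_nonlinear}, \lemref{lemma:qv_prioribounds} (and its proof, which already uses this interpolation) and \lemref{lemma:inequalities_DPhi}, the uniform bound $\sup_{s}\bigl(\norm{q_s}_l+\norm{v_s}_l+\norm{\dot q_s}_l+\norm{\dot v_s}_l+\norm{\ddot q_s}_l+\norm{\ddot v_s}_l\bigr)\le K(1+\norm{q_0}_l+\norm{v_0}_l)$ with $K=K(L,L')$, where \assref{ass:Ltht1} is used to absorb $T$ (hence $h$) into the constant.

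The last term is handled directly: telescoping $\norm{C^{1/2}\Dif\Phi(q_0)}^2-\norm{C^{1/2}\Dif\Phi(q_I)}^2$ over the $I$ steps and using items (v)--(vi) of \lemref{lemma:inequalities_DPhi} together with $\norm{q^{k+1}-q^k}_l=O(h)(1+\norm{q_0}_l+\norm{v_0}_l)$ gives $O(Th^2)(1+\norm{q_0}_l+\norm{v_0}_l)^2$. For the drift term, on one step $\dot q_s-v_s=-\frac{\sin\bt{s}}{2}C\Dif\Phi(q^k)+\frac{\bt{s}}{2}C\Dif\Phi(q^{k+1})$, and since $\frac{h^2}{4}-\frac{1-\cos h}{2}=O(h^4)$ one gets $\int_{kh}^{(k+1)h}(\dot q_s-v_s)\dif s=\frac{h^2}{4}C\bigl(\Dif\Phi(q^{k+1})-\Dif\Phi(q^k)\bigr)+O(h^4)C\Dif\Phi(q^k)=O(h^3)(1+\norm{q_0}_l+\norm{v_0}_l)$ by \eqref{equ:DPhiDifferencebound}--\eqref{equ:DPhiGrowthBound}; pairing against $\Dif\Phi(q^k)$ and adding $\int_{kh}^{(k+1)h}\sprod{\Dif\Phi(q_s)-\Dif\Phi(q^k)}{\dot q_s-v_s}\dif s$ (controlled by $L\norm{q_s-q^k}_l\,\norm{\dot q_s-v_s}_l=O(h^2)(\cdots)^2$) shows this term is again $O(Th^2)(1+\norm{q_0}_l+\norm{v_0}_l)^2$. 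The middle term is the main one: it is a composite trapezoidal error, so on each step it is of order $h^3\sup|f''|$ for $f(s):=\sprod{\Dif\Phi(q_s)}{v_s}$ (Peano kernel, or equivalently $\int_0^h(\frac h2-u)(f'(u)-f'(\tfrac h2))\dif u$), and
\begin{align*}
f''={}&\sprod{\Dif^3\Phi(q_s)[\dot q_s,\dot q_s]}{v_s}+\sprod{\Dif^2\Phi(q_s)\ddot q_s}{v_s}\\
&+2\sprod{\Dif^2\Phi(q_s)\dot q_s}{\dot v_s}+\sprod{\Dif\Phi(q_s)}{\ddot v_s},
\end{align*}
which by $\norm{\Dif^2\Phi}_{l\to-l}\le L$, $\norm{\Dif^3\Phi}\le M$ and the a priori bounds is $O(1)(1+\norm{q_0}_l+\norm{v_0}_l)^3$ — the cubic power entering through the two $\dot q_s$ and the $v_s$ in the first summand. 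Summing over the $I=T/h$ steps gives $\abs{\Delta H(q_0,v_0)}\le K_1Th^2(1+\max\{\norm{q_0}_l,\norm{v_0}_l\}^3)$ with $K_1=K_1(L,L',M)$.

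For the second inequality I would differentiate the displayed decomposition in a direction $(z,w)$. Setting $J_s=\partial_{(z,w)}q_s$, $K_s=\partial_{(z,w)}v_s$, these satisfy the variational form of \eqref{equ:ode_discr_dynamics_nonlinear}, i.e.\ \eqref{equ:ode_discr_dynamics_nonlinear} with each $C\Dif\Phi(q_\cdot)$ replaced by $C\Dif^2\Phi(q_\cdot)J_\cdot$; since $\norm{C\Dif^2\Phi(x)}_{l\to l}\le L$ for all $x$, the \emph{homogeneous} linear estimate behind \lemref{lemma:zw_prioribounds} (together with differentiation of the variational equation) yields $\sup_s\bigl(\norm{J_s}_l+\norm{K_s}_l+\norm{\dot J_s}_l+\norm{\dot K_s}_l+\norm{\ddot J_s}_l+\norm{\ddot K_s}_l\bigr)\le K(\norm{z}_l+\norm{w}_l)$ with $K=K(L)$, crucially with no dependence on $(q_0,v_0)$. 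Differentiating each of the three terms reproduces its structure with one extra factor $\max\{\norm{z}_l,\norm{w}_l\}$ entering through $J_s$ or $K_s$; in particular the middle term becomes the composite trapezoidal error of $g(s):=\partial_{(z,w)}f(s)=\sprod{\Dif^2\Phi(q_s)J_s}{v_s}+\sprod{\Dif\Phi(q_s)}{K_s}$, whose second $s$-derivative involves $\Dif^4\Phi$ (bounded in \assref{ass:DPhi2BoundM}). Since the $J_s,K_s$ factors carry no power of $1+\norm{q_0}_l+\norm{v_0}_l$, the overall power stays cubic, and one obtains $\abs{\partial_{(z,w)}\Delta H(q_0,v_0)}\le K_2Th^2\max\{\norm{z}_l,\norm{w}_l\}(1+\max\{\norm{q_0}_l,\norm{v_0}_l\}^3)$ with $K_2$ depending only on $L,L',M$ and the bound on $\Dif^4\Phi$.

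I expect the main obstacle to be the bookkeeping in the $f''$ (and $g''$) estimate: one must keep the duality pairings between $\mathcal{H}^{-l}$ and $\mathcal{H}^l$ straight, verify that the $\frac{h^2}{8}$ correction genuinely cancels the leading parts of the drift and quadrature errors rather than merely being small, and — the point that matters for the application — check that no power of $1+\norm{q_0}_l+\norm{v_0}_l$ beyond the cube is ever produced and that every constant is expressible through $L,L',M$ (and, for $K_2$, $\norm{\Dif^4\Phi}$), hence independent of the discretisation dimension.
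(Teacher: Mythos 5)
Your proposal is correct and follows essentially the same route as the paper: both rewrite $\Delta H$ via the interpolation \eqref{equ:ode_discr_interpolation_nonlinear} as the error of a trapezoidal approximation of $\int_0^T\sprod{\Dif\Phi(q_s)}{v_s}\dif s$ plus a drift correction (from $\dot q_s\neq v_s$) and the $\tfrac{h^2}{8}$ boundary term, bound the quadrature error through the second time-derivative of $\sprod{\Dif\Phi(q_s)}{v_s}$ (which produces the cubic power via $\Dif^3\Phi(\dot q_s,\dot q_s)v_s$), and handle $\partial_{(z,w)}\Delta H$ by the variational equations with $(q_0,v_0)$-independent a priori bounds. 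The only cosmetic difference is that you organize the decomposition globally while the paper splits it per step ($\epsilon_i=\delta_i+\gamma_i$); also note that, as your own estimates already show, no cancellation from the $\tfrac{h^2}{8}$ term is needed — each piece is separately $O(Th^2)$, which is exactly how the paper proceeds.
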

	\begin{proof}
		Recall the exact form of the energy error (\ref{equ:DeltaH_pHMC}) and the formal limit of $\Delta H$ for $h \to 0$ given as
		\begin{equation*}
			\Delta H(q_0, v_0) \xrightarrow[h \to 0]{} \Phi(q_I) - \Phi(q_0) - \int_0^t \Dif\Phi(q_s)v_s \dif s = 0.
		\end{equation*}
		We will write $\Delta H$ as an approximation of the formal limit. Then the size of the energy error $\Delta H$ will mainly stem from the error we make when approximating the integral $\int_0^T \Dif\Phi(q_s)v_s \dif s$. This error  Let us write $\Delta H$ as
		\begin{align*}
			\Delta H(q_0, v_0) = \frac{h^2}{8}\left(\norm{C^{1/2}\Dif\Phi(q_0)}^2 - \norm{C^{1/2}\Dif\Phi(q_I)}^2\right) + \sum_{i=1}^{I} \epsilon_i(q_0, v_0)
		\end{align*}
		with
		\begin{align*}
			\epsilon_i = \epsilon_i(q_0, v_0) = \Phi(q_i) - \Phi(q_{i-1}) - \frac{h}{2}(\sprod{\Dif \Phi(q_{i-1})}{v_{i-1}} + \sprod{\Dif \Phi(q_{i})}{v_{i}}).
		\end{align*}
		Now we bound $\abs{\epsilon_i}$. We again use the interpolation (\ref{equ:ode_discr_interpolation_nonlinear}) and view the discrete iterates $(q_i, v_i)$ as the evaluation of continuous paths $(q_s)_{s \in [0, t]}$ and $(v_s)_{s \in [0, t]}$ at times $ih$, i.e. $(q_i, v_i) = (q_{ih}, v_{ih})$ for $i=0,\ldots, I = \frac{t}{h}$ and see
		\begin{align*}
			\Phi(q_i) - \Phi(q_{i-1}) =& \Phi(q_{ih}) - \Phi(q_{(i-1)h}) \\
			=& \int_{(i-1)h}^{ih} \dod{\Phi(q_s)}{s} \dif s \\
			=& \int_{(i-1)h}^{ih} \Dif\Phi(q_s)\dot{q_s} \dif s \\
			=& \int_{(i-1)h}^{ih} \Dif\Phi(q_s)\left(v_s - \frac{\sin(\bt{s})}{2} C\Dif\Phi(q_{\floor{s}})
			+ \frac{\bt{s}}{2}C \Dif \Phi(q_{\ceil{s}})\right) \dif s.
		\end{align*}
		Therefore
		\begin{align*}
			\abs{\epsilon_i} \le \delta_i + \gamma_i
		\end{align*}
		where
		\begin{align*}
			\delta_i =& \abs{\int_{(i-1)h}^{ih} \Dif\Phi(q_s)^tv_s \dif s - \frac{h}{2}\left(\Dif \Phi(q_{(i-1)h})^t{v_{(i-1)h}} + {\Dif \Phi(q_{ih})}^t{v_{ih}}\right)}\\
			\gamma_i =&\abs{\int_{(i-1)h}^{ih} \Dif\Phi(q_s)\left(\frac{\bt{s}}{2}C \Dif \Phi(q_{\ceil{s}}) -\frac{\sin(\bt{s})}{2} C\Dif\Phi(q_{\floor{s}})\right) \dif s}.
		\end{align*}
		By $q^t$ we denote the transpose of a finite dimensional vector.
		For $\delta_i$ we can use the approximation result that for $f \in C^2([a, b])$
		\begin{equation*}
			\int_c^d f(x) \dif x - \frac{d-c}{2}(f(c) + f(d)) = -\frac{h^3}{12}f''(s)
		\end{equation*}
		with $s \in \intcc{c, d}$ (see Theorem 9.4 of \citet{kress1998numerical}). Since $q_s, v_s$ are smooth on $[ih, (i+1)h]$ and $\Phi \in C^4$ we can use above result and see
		to see that
		\begin{align*}
			\delta_i = \abs{\frac{h^3}{12} \dod[2]{}{t}{\Dif\Phi(q_s)^t}{v_s}}
		\end{align*}
		for an $s \in \intoo{(i-1)h, ih}$. We denote by $\dot{f(t)} = \od{f(t)}{t}$ the time derivative. $\ddot{f(t)}$ denotes the second time derivative. We calculate
		\begin{align*}
			\dod[2]{}{t}{\Dif\Phi(q_s)^t}{v_s} = {\ddot{\Dif\Phi(q_s)}}^t{v_s} + 2{\dot{\Dif\Phi(q_s)}}^t{\dot{v_s}} + {\Dif\Phi(q_s)}^t{\ddot{v_s}}.
		\end{align*}
		Now we calculate and bound each of the terms. We start by calculating the second time derivatives of $q_s$ and $v_s$,
		\begin{equation}
			\label{equ:second_order_derv}
			\begin{aligned}
				\ddot{q_s} &= -q_s - \cos(\bt{s})C\Dif\Phi(q_\floor{s}), \text{~ and}\\
				\ddot{v_s} &= -v_s + \sin(\bt{s})C\Dif\Phi(q_\floor{s}) - \frac{\bt{s}}{2} C\Dif\Phi(q_\ceil{s}).
			\end{aligned}
		\end{equation}
		We bound them by
		\begin{align*}
			\norm{\ddot{q_s}}_l &\le q_\ceil{s}^*(1 + L) + L'\\
			\norm{\ddot{v_s}}_l &\le v_\ceil{s}^* + 2h(Lq_\ceil{s}^* + L')
		\end{align*}
		with constants $K$ only depending on $L$ and $L'$. Now
		\begin{align*}
			\norm{\ddot{\Dif\Phi(q_s)}}_l
			=& \norm{\dod{\Dif^2\Phi(q_s)\dot{q_s}}{t}}_l\\
			=& \norm{\Dif^3\Phi(q_s)(\dot{q_s}, \dot{q_s}) + \Dif^2\Phi(q_s)\ddot{q_s}}_l\\
			\le& M\left(v_\ceil{s}^* + h(Lq_\ceil{s}^* + L')\right)^2 + L\left(q_\ceil{s}^*(1 + L) + L'\right)
		\end{align*}
		and therefore
		\begin{align*}
			\abs{\dod[2]{}{t}{\Dif\Phi(q_s)^t}{v_s}} \le& v_\ceil{s}^*\left(M\left(v_\ceil{s}^* + h(Lq_\ceil{s}^* + L')\right)^2 + L\left(q_\ceil{s}^*(1 + L) + L'\right)\right) \\
			&+ 2L\left(v_\ceil{s}^* + h(Lq_\ceil{s}^* + L')\right)\left(q_\ceil{s}^*(L+1) + L'\right) \\
			&+ \left(Lq_\ceil{s}^*	+ L'\right)\left(v_\ceil{s}^* + 2h(Lq_\ceil{s}^* + L')\right).
		\end{align*}
		Especially
		\begin{align*}
			\delta_i \le \frac{h^3}{12} \abs{\dod[2]{}{t}{\Dif\Phi(q_s)^t}{v_s}} \le h^3K(1 + \max\{\norm{q_0}_l, \norm{v_0}_l\}^3) =: \delta^*
		\end{align*}
		for a constant $K > 0$ depending only on $L, L'$ and $M$. The bound on the right hand side exists because $s \le t \in h\bb{Z}$ and we can bound $q_t^*$ and $v_t^*$ by a multiple of $\norm{q_0}_l$ and $\norm{v_0}_l$ that only depends on $L$ and $L'$ by \lemref{lemma:qv_prioribounds}. For $\gamma_i$ we get
		\begin{align*}
			\gamma_i \le& (Lq_t^* + L')\int_{(i-1)h}^{ih}\norm{\frac{\bt{s}}{2}C \Dif \Phi(q_{ih}) -\frac{\sin(\bt{s})}{2} C\Dif\Phi(q_{(i-1)h})}_l \dif s.
		\end{align*}
		We now calculate that
		\begin{align*}
			&\norm{\frac{\bt{s}}{2}C \Dif \Phi(q_{ih}) -\frac{\sin(\bt{s})}{2} C\Dif\Phi(q_{(i-1)h})}_l \\
			\le& \norm{\frac{\bt{s}}{2}\left(C\Dif \Phi(q_{ih}) -C\Dif\Phi(q_{(i-1)h})\right)}_l + \abs{\frac{\bt{s}}{2} - \frac{\sin(\bt{s})}{2}}\norm{C\Dif\Phi(q_{(i-1)h})}_l \\
			\le& \frac{\bt{s}}{2} L \norm{q_{ih} - q_{(i-1)h}}_l + \frac{1}{12}\bt{s}^3(Lq_t^* + L') \\
			\le& \frac{\bt{s}}{2}Lh(v_\ceil{s}^* + h(Lq_\ceil{s}^* + L')) + \frac{1}{12}\bt{s}^3(Lq_\ceil{s}^* + L')
		\end{align*}
		where we used (\ref{equ:x_sin_difference_bound}) to bound $\bt{s} - \sin(\bt{s})$. Therefore
		\begin{align*}
			\gamma_i \le& (Lq_t^* + L')\left(\frac{h^3}{4}L\left(v_t^* + h\left(Lq_t^* + L'\right)\right) + \frac{h^4}{48}\left(Lq_t^* + L'\right)\right) \\
			\le& h^3K(1 + \max\{\norm{q_0}_l + \norm{v_0}_l\}^2) =: \gamma^*
		\end{align*}
		with a constant $K > 0$ depending only on $L$ and $L'$. The bound again exists because of \lemref{lemma:qv_prioribounds}. 
		We conclude that
		\begin{align*}
			\sum_{i=1}^I \epsilon_i \le \frac{t}{h}(\delta^* + \gamma^*) \le th^2K\left(1 + \max\left\{\norm{q_0}_l, \norm{v_0}_l\right\}^3\right)
		\end{align*}
		for some constant $K > 0$ that only depends on $L$ and $L'$. It only depends on $L$ and $L'$ since by \lemref{lemma:qv_prioribounds} both $q_t^*$ and $v_t^*$ are bounded by a constant multiple of $\max\{\norm{q_0}_l, \norm{v_0}_l\}$ because $t$ is bounded by \assref{ass:Ltht1}. We also calculate that
		\begin{align*}
			&\abs{\norm{C^{1/2}\Dif\Phi(q_I)}^2 - 
				\norm{C^{1/2}\Dif\Phi(q_0)}^2}\\
			=& \abs{\int_0^t 2\sprod{C^{1/2}\Dif^2\Phi(q_s)\dot{q_s}}{C^{1/2}\Dif\Phi(q_s)} \dif s} \\
			\le& 2t L\left(v_t^* + h(Lq_t^* + L')\right)\left(L(q_t^* + L')\right)
		\end{align*}
		and therefore
		\begin{align*}
			\frac{h^2}{8}\left(\norm{C^{1/2}\Dif\Phi(q_0)}^2 - \norm{C^{1/2}\Dif\Phi(q_I)}^2\right) \le h^2t K\left(1 + \max\left\{\norm{v_0}, \norm{q_0}\right\}^2\right).
		\end{align*}
		Putting it all together we obtain
		\begin{align*}
			\abs{\Delta H(q_0, v_0)} \le th^2K\left(1 + \max\left\{\norm{q_0}_l, \norm{v_0}_l\right\}^3\right)
		\end{align*}
		for a constant $K > 0$ depending only on $L, L'$ and $M$. This proves the first claim.
		
		We now do the same for the derivative of $\Delta H$. We denote $\partial_{(z, w)} q_{s}(q_0, v_0)$ as $q_s'$ and similarly for $v_s$.  $\partial_{(z, w)} \Delta H(q_0, v_0)$ is denoted by $\Delta H'$. First we need to show a priori bounds for $q_s'$ and $v_s'$ similar to \lemref{lemma:qv_prioribounds}. Note that $q_s'$ and $v_s'$ fulfil the differential equations
		\begin{equation*}
			\begin{cases}
				\dod{q_s'}{s} &= 
				v_s'
				- \frac{\sin(\bt{s})}{2} C\Dif^2\Phi(q_{\floor{s}})q_{\floor{s}}'
				+ \frac{\bt{s}}{2}C \Dif^2 \Phi(q_{\ceil{s}})q_{\ceil{s}}' \\
				\dod{v_s'}{s} &= -q_s' - \frac{\cos(\bt{s})}{2}C\Dif^2\Phi(q_\floor{s})q_\floor{s}' - \frac{1}{2}C\Dif^2\Phi(q_\ceil{s})q_\ceil{s}'
			\end{cases}
		\end{equation*}
		with initial conditions $(q_0', v_0') = (z, w)$. As in the proofs of \lemref{lemma:qv_prioribounds} and \lemref{lemma:zw_prioribounds} we calculate
		\begin{align*}
			&{\int_0^t v_r' \dif r}\\
			=& {tv_0' + \int_0^t \int_0^r -q_u' - \frac{\cos(\bt{u})}{2}C\Dif^2\Phi(q_\floor{u})q_\floor{u}' - \frac{1}{2}C\Dif^2\Phi(q_\ceil{u})q_\ceil{u}' \dif u \dif r}
		\end{align*}
		and bound
		\begin{align*}
			&\norm{\int_0^t \int_0^r -q_u' - \frac{\cos(\bt{u})}{2}C\Dif^2\Phi(q_\floor{u})q_\floor{u}' - \frac{1}{2}C\Dif^2\Phi(q_\ceil{u})q_\ceil{u}' \dif u \dif r}_l \\
			\le& \frac{1}{2}t^2q_t'^*(1 + L)
		\end{align*}
		where we used that $t \in \bb{Z}$. We now see that
		\begin{align*}
			\norm{q_t' - q_0' - tv_0'}_l \le \frac{1}{2}t^2 q_{t}'^*(1 + L) + Lthq_{t}'^* = \frac{1}{2}q_{t}'^*(t^2 + L(t^2 + 2th)) 
		\end{align*}
		from which we conclude using \assref{ass:Ltht1} that
		\begin{align*}
			\norm{q_t' - q_0' - tv_0'}_l &\le (t^2 + L(t^2 + 2th)) \max\{\norm{q_0'}_l, \norm{q_0' + tv_0'}_l\}
		\end{align*}
		and
		\begin{align*}
			{q_t'}^* &\le (1 + t^2 + L(t^2 + 2th)) \max\{\norm{q_0'}_l, \norm{q_0' + tv_0'}_l\} \\
			&\le 2\max\{\norm{q_0'}_l, \norm{q_0' + tv_0'}_l\}.
		\end{align*}
		We again see that 
		\begin{align*}
			\norm{v_t' - v_0'}_l &\le t {q_t'}^*(1 + L)
		\end{align*}
		and therefore obtain the same bounds as in \lemref{lemma:zw_prioribounds}.
		
		Now we turn to the exact formula for $\Delta H'$ which is
		\begin{align*}
			&\frac{h^2}{8}\left(2 \left(C^{1/2}\Dif\Phi(q_0)\right)^tC^{1/2}\Dif^2\Phi(q_0)q_0' - 2 \left(C^{1/2}\Dif\Phi(q_t)\right)^tC^{1/2}\Dif^2\Phi(q_t)q_t'\right) \\
			+&\sum_{i=1}^I \epsilon_i,
		\end{align*}
		where
		\begin{align*}
			\epsilon_i =& \Dif \Phi(q_{ih})^tq_{ih}' - \Dif \Phi(q_{(i-1)h})^t q_{(i-1)h}' \\
			&- \frac{h}{2}\left(
			v_{(i-1)h}^t \Dif^2 \Phi(q_{(i-1)h}) q_{(i-1)h}'
			+\Dif\Phi(q_{(i-1)h})^t v_{(i-1)h}'\right) \\
			&-\frac{h}{2}\left(v_{ih}^t \Dif^2 \Phi(q_{ih}) q_{ih}' 
			+\Dif\Phi(q_{ih})^t v_{ih}'
			\right).
		\end{align*}
		We now see that 
		\begin{align*}
			&\Dif \Phi(q_{ih})^tq_{ih}' - \Dif \Phi(q_{(i-1)h})^t q_{(i-1)h}' 
			= \int_{(i-1)h}^{ih} \left(\Dif^2\Phi(q_s)\dot{q_s}\right)^t q_{s}' + \Dif\Phi(q_s)^t \dot{q_s}' \dif s
		\end{align*}
		and therefore
		\begin{align*}
			\epsilon_i = \delta_i + \gamma_i
		\end{align*}
		where
		\begin{align*}
			\delta_i =& \int_{(i-1)h}^{ih} v_s^t \Dif^2 \Phi(q_s)q_s' + \Dif\Phi(q_s)^t v_s' \dif s \\
			&- \frac{h}{2}\left(
			v_{(i-1)h}^t \Dif^2 \Phi(q_{(i-1)h}) q_{(i-1)h}'
			+\Dif\Phi(q_{(i-1)h})^t v_{(i-1)h}'\right) \\
			&-\frac{h}{2}\left(v_{ih}^t \Dif^2 \Phi(q_{ih}) q_{ih}' 
			+\Dif\Phi(q_{ih})^t v_{ih}'
			\right)
		\end{align*}
		and
		\begin{align*}
			\gamma_i =& \int_{(i-1)h}^{ih} \left(-\frac{\sin(\bt{s})}{2} C\Dif\Phi(q_{\floor{s}})
			+ \frac{\bt{s}}{2}C \Dif \Phi(q_{\ceil{s}})\right)^t \Dif^2\Phi(q_s)q_s' \dif s \\
			&+ \int_{(i-1)h}^{ih} \Dif\Phi(q_s)^t \left(-\frac{\sin(\bt{s})}{2} C\Dif^2\Phi(q_{\floor{s}})q_{\floor{s}}'
			+ \frac{\bt{s}}{2}C \Dif^2 \Phi(q_{\ceil{s}})q_{\ceil{s}}'\right) \dif s.
		\end{align*}
		$\delta_i$ is again the hardest to bound. We again use Theorem 9.4  of \citet{kress1998numerical} and need to bound
		\begin{align*}
			&\abs{\dod[2]{}{t} v_s^t \Dif^2 \Phi(q_s)q_s' + \Dif\Phi(q_s)^t v_s'}.
		\end{align*}
		We start by bounding
		\begin{equation}
			\label{equ:secder_first_termbla}
			\begin{aligned}
				&{\dod[2]{}{t} v_s^t \Dif^2 \Phi(q_s)q_s'}\\
				=~& \ddot{v_s}^t\Dif^2\Phi(q_s)q_s' + v_s^t\left(\Dif^4\Phi(q_s)(\dot{q_s}, \dot{q_s}) + \Dif^3\Phi(q_s)\ddot{q_s}\right)q_s' + v_s^t \Dif^2\Phi(q_s)\ddot{q_s}'\\
				&+ 2\dot{v_s}^t\Dif^3\Phi(q_s)(\dot{q_s})q_s' + 2\dot{v_s}^t\Dif^2\Phi(q_s)\dot{q_s}' + 2 v_s^t\Dif^3(q_s)(\dot{q_s})\dot{q_s}'.
			\end{aligned}
		\end{equation}
		Analogously to (\ref{equ:second_order_derv}) we see that
		\begin{align*}
			\ddot{q_s}' = -q_s' - \cos(\bt{s})C\Dif^2\Phi(q_\floor{s})q_\floor{s}'
		\end{align*}
		and
		\begin{align*}
			\norm{\ddot{q_s}'}_l \le {q_s'}^*(1 + L).
		\end{align*}
		Now we bound the terms in (\ref{equ:secder_first_termbla}):
		\begin{align*}
			&\abs{\dod[2]{}{t} v_s^t \Dif^2 \Phi(q_s)q_s'} \\
			\le& \left(v_\ceil{s}^* + 2h(Lq_\ceil{s}^* + L')\right)Lq_\ceil{s}'^* \\
			&+ v_\ceil{s}^*\left(N\left(v_\ceil{s}^* + h(Lq_\ceil{s}^* + L')\right)^2 + M\left(q_\ceil{s}^*(1 + L) + L'\right)\right)q_\ceil{s}'^* \\
			&+ v_\ceil{s}^*L\left(q_\ceil{s}'^*(1 + L)\right) \\
			&+ 2\left(q_\ceil{s}^*(1 + L) + L'\right)M\left(v_\ceil{s}^* + h(Lq_\ceil{s}^* + L')\right)q_\ceil{s}'^* \\
			&+ 2\left(q_\ceil{s}^*(1 + L) + L'\right)L\left(v_\ceil{s}'^* + hLq_\ceil{s}'^*\right) \\
			&+ 2v_\ceil{s}^* M\left(v_\ceil{s}^* + h(Lq_\ceil{s}^* + L')\right)\left(v_\ceil{s}'^* + hLq_\ceil{s}'^*\right).
		\end{align*}
		We see that we can bound
		$\ceil{s}$
		\begin{equation}
			\label{equ:bound_complicated_term_k}
			\abs{\dod[2]{}{t} v_t^t \Dif^2 \Phi(q_t)q_t'} \le K\max\{\norm{q_0'}_l, \norm{v_0'}_l\}\left(1 + \max\{\norm{q_0}_l, \norm{v_0}_l\}^3\right)
		\end{equation}
		for a constant $K > 0$ depending only on $L, L', M, N$. 
		
		Now we bound
		\begin{align*}
			{\dod[2]{}{t} \Dif\Phi(q_s)^t v_s'} =& \Dif^3\Phi(q_s)(\dot{q_s}, \dot{q_s})^t v_s' + \Dif^2\Phi(q_s)(\ddot{q_s})^t v_s' \\
			&+ 2\Dif^2\Phi(q_s)(\dot{q_s})\dot{v_s}' + \Dif\Phi(q_s)\ddot{v_s}'
		\end{align*}
		by
		\begin{align*}
			\abs{\dod[2]{}{t} \Dif\Phi(q_s)^t v_s'} \le& M\left(v_\ceil{s}^* + h(Lq_\ceil{s}^* + L')\right)^2v_\ceil{s}'^* \\
			&+ L\left(q_\ceil{s}^*(1 + L) + L'\right)v_\ceil{s}'^*\\
			&+ 2L\left(v_\ceil{s}^* + h(Lq_\ceil{s}^* + L')\right)\left(q_\ceil{s}'^*(1 + L)\right) \\
			&+ (Lq_\ceil{s}^* + L')\left(v_\ceil{s}'^* + 2hLq_\ceil{s}'^*\right)\\
			\le& K\max\{\norm{q_0'}_l, \norm{v_0'}_l\}\left(1 + \max\{\norm{q_0}_l, \norm{v_0}_l\}^2\right)
		\end{align*}
		for a constant $K > 0$ depending only on $L, L', M$. These bounds especially lead to
		\begin{equation*}
			\abs{\delta_i} \le h^3K\max\{\norm{q_0'}_l, \norm{v_0'}_l\}\left(1 + \max\{\norm{q_0}_l, \norm{v_0}_l\}^3\right).
		\end{equation*}
		We can also bound $\gamma_i$:
		\begin{align*}
			\abs{\gamma_i} \le& {q_t'}^*L \left(
			\frac{h^4}{48} \left(Lq_t^* + L'\right) + \frac{h^3}{4}L\left(v_t^* + h(Lq_t^* + L')\right)\right) \\
			&+ \left(Lq_t^* + L'\right)\left(
			\frac{h^4}{48}L{q_t'}^* + \frac{h^3}{4}M\left({v_t}^* + h(Lq_t^*+L')\right){q_t'}^*\right)\\
			&+ \left(Lq_t^* + L'\right)\left(\frac{h^3}{4}L\left({v_t'}^* + hL{q_t'}^*\right)
			\right) \\
			\le& h^3K\max\{\norm{q_0'}_l, \norm{v_0'}_l\}\left(1 + \max\{\norm{q_0}_l, \norm{v_0}_l\}^2\right)
		\end{align*}
		where $K$ depends only on $L, L'$ and $M$. Therefore
		\begin{equation}
			\label{equ:inequ_eps_i_laisdh}
			\begin{aligned}
				&\sum_{i=1}^I \abs{\epsilon_i} 
				\le th^2K\max\{\norm{q_0'}_l, \norm{v_0'}_l\} \left(1  + \max\{\norm{q_0}_l, \norm{v_0}_l\}^3\right)
			\end{aligned}
		\end{equation}
		for some constant $K$ depending only on $L, L', M$, and $N$. At last we now bound
		\begin{align*}
			&\left(C^{1/2}\Dif\Phi(q_0)\right)^tC^{1/2}\Dif^2\Phi(q_0)q_0' -  \left(C^{1/2}\Dif\Phi(q_t)\right)^tC^{1/2}\Dif^2\Phi(q_t)q_t'\\
			\le&  {q_t'}^*L^2t\left(v_t^* + h(Lq_t^* + L')\right) 
			+ \left(Lq_t^* + L'\right){q_t'}^* Mt\left(v_t^* + h(Lq_t^* + L')\right)\\
			&+ \left(Lq_t^* + L'\right)Lt\left({v_t'}^* + hL{q_t'}^*\right)
		\end{align*}
		by splitting the terms up according to 
		\begin{equation*}
			a_1b_1c_1 - a_2b_2c_2 = (a_1 - a_2)b_1c_1 + (b_1 - b_2)a_2c_1 + (c_1 - c_2)a_2b_2.
		\end{equation*}
		We see that now an analogous bound to (\ref{equ:inequ_eps_i_laisdh}) holds and therefore it also holds for $\Delta H'$ which concludes the proof. 
	\end{proof}
	We are now ready to prove the finite dimensional version of \thmref{thm:exact_pHMC_contractive_hs} and \thmref{thm:contraction_hs}.
	\begin{theorem}
		\label{thm:exact_pHMC_contractive_Rd}
		Assume that \assref{ass:DPhiBoundL}, \assref{ass:PhiConvex} and \assref{ass:DPhi2BoundM} hold. Assume that $T$ is such that for $h=0$, \assref{ass:LthtzetaL} holds, i.e. $T^2(L + 1) \le \frac{\zeta}{L + 1}$. Then for every $x, y \in \mathcal{H}_N \approx \bb{R}^N$ 
		\begin{equation*}
			\bb{E}\left[\norm{x'(x, v) - y'(y, v)}_l\right] \le \left(1 - \frac{1}{27}\zeta T^2\right) \norm{x - y}_l.
		\end{equation*}
	\end{theorem}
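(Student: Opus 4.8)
This is an immediate corollary of the contraction \lemref{lemma:contraction} applied to the exact flow, i.e.\ to step size $h = 0$. First I would note that, as recorded after \eqref{equ:ode_discr_dynamics_nonlinear}, at $h = 0$ the nonlinear interpolation degenerates to the exact dynamics \eqref{equ:hamiltonian_dynamics_velocity_infinite_dims}, so $q_T(x, v)$ is the position component of $\varphi_T(x, v)$ and $\norm{q_T(x,v) - q_T(y,v)}_l = \norm{x'(x,v) - y'(y,v)}_l$. Then I would check that \lemref{lemma:contraction} is applicable with $t = T$ and $h = 0$: \assref{ass:DPhiBoundL}, \assref{ass:PhiConvex} and \assref{ass:DPhi2BoundM} are assumed; \assref{ass:LthtzetaL} at $h = 0$ reads $T^2(L+1)^2 \le \zeta$, which is precisely the hypothesis; and the side condition \eqref{equ:ass_size_bound_x_v}, namely $(1 + \norm{x}_l + \norm{v}_l)h \le \zeta/K$, holds vacuously since its left-hand side is $0$, independently of $x$ and $v$. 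Consequently \lemref{lemma:contraction} yields, for \emph{every} $v$,
\[
\norm{x'(x, v) - y'(y, v)}_l^2 = \norm{q_T(x, v) - q_T(y, v)}_l^2 \le \Bigl(1 - \tfrac{1}{12}\zeta T^2\Bigr)\norm{x - y}_l^2 .
\]

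To reach the stated $L^1$ estimate I would pass to the unsquared bound. From $T^2(L+1)^2 \le \zeta$ together with $\zeta \le L+1$ (part of \assref{ass:PhiConvex}) we get $\zeta T^2 \le \zeta^2 (L+1)^{-2} \le 1$, hence $a := \tfrac{1}{12}\zeta T^2 \in [0, 1]$, and the elementary inequality $\sqrt{1-a} \le 1 - \tfrac{a}{2}$ gives
\[
\norm{x'(x, v) - y'(y, v)}_l \le \Bigl(1 - \tfrac{1}{24}\zeta T^2\Bigr)\norm{x - y}_l \le \Bigl(1 - \tfrac{1}{27}\zeta T^2\Bigr)\norm{x - y}_l .
\]
Since this bound does not depend on $v$, taking the expectation over $v \sim \mathcal{N}(0, C_N)$ is trivial and produces exactly the claim. (The looser constant $\tfrac{1}{27}$ is retained only so that the statement matches the rate in the adjusted theorem \thmref{thm:contraction_hs}, where the accept/reject step contributes further error terms; the exact case genuinely gives a better constant.)

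There is essentially no obstacle in the exact case — all the analytic content already sits in \lemref{lemma:contraction}. What makes the present statement a clean corollary is that at $h = 0$ the perturbation quantities $\delta$, $\eta$, $\epsilon$ (hence $\beta$ and $D_t$) appearing in that proof vanish identically, so its Gr\"onwall-type argument collapses to the clean estimate $\norm{q_T(x,v) - q_T(y,v)}_l^2 \le \cos(\sqrt{2\zeta}\,T)\norm{x-y}_l^2 + \tfrac{2}{3}(1+L)^2 T^4 \max_{s\le T}\norm{q_s(x,v)-q_s(y,v)}_l^2$, and the Taylor bound $\cos u \le 1 - \tfrac{u^2}{2} + \tfrac{u^4}{24}$ together with $(L+1)^2 T^2 \le \zeta$ and the bootstrap $\max_{s\le T}\norm{q_s(x,v)-q_s(y,v)}_l \le \norm{x-y}_l$ close the argument. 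The only points requiring care are the bookkeeping of which hypotheses degenerate at $h = 0$ and the verification that $\tfrac{1}{12}\zeta T^2 \le 1$, which legitimises the square-root step.
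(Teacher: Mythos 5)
Your proof is correct and takes the same route as the paper, whose entire proof of this theorem is the one-line observation that it follows from \lemref{lemma:contraction} because $x'(x,v)=\varphi_T(x,v)=\psi^{(T)}_0(x,v)$. You simply make explicit the details the paper leaves implicit: that the side condition $(1+\norm{x}_l+\norm{v}_l)h\le\zeta/K$ is vacuous at $h=0$, and that the passage from the squared bound with rate $\tfrac{1}{12}\zeta T^2$ to the unsquared bound with rate $\tfrac{1}{27}\zeta T^2$ goes through $\sqrt{1-a}\le 1-\tfrac{a}{2}$ (the same step the paper performs inside the proof of the adjusted theorem).
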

	\begin{proof}
		This directly follows from \lemref{lemma:contraction} since $x'(x, v) = \varphi_T(x, v) = \psi^{(T)}_0(x, v)$.
	\end{proof}
	To make the notation a bit cleaner, we define
	\begin{equation*}
		R'(x, y) = \norm{x'(x, v) - y'(y, v)} \text{~ and ~} r(x, y) = \norm{x - y}.
	\end{equation*}
	\begin{theorem}
		\label{thm:contraction_Rd}
		Assume that \assref{ass:PhiConvex}, \assref{ass:DPhiBoundL} and \assref{ass:DPhi2BoundM} hold. Assume that $T$ and $h_1$ are such that \assref{ass:LthtzetaL} is satisfied for $h = h_1$. Furthermore, let $R$ be a real number such that $\bb{P}[\norm{v}_l > R] \le \frac{\zeta}{2000(L+1)}$. Then there exists an $h_0$ such that for any $0 < h \le \min\{h_0, h_1\}$ with $T/h \in \bb{Z}$ and for any $x, y \in \bb{R}^d$ with $\max\{\norm{x}_l, \norm{y}_l\} \le R$ we have
		\begin{equation*}
			\bb{E}[R'(x, y)] \le \left(1 - \frac{1}{27}\zeta T^2\right) r(x,y).
		\end{equation*}
		Furthermore for fixed $L, L', M$ and $C$,  $h_0^{-1}$ has a lower bound of the form $\mathcal{O}((1 + T^{-1/2})(1 + R^2))$.
	\end{theorem}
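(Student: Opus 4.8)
Throughout, \assref{ass:Ltht1} holds (being implied by \assref{ass:PhiConvex} and \assref{ass:LthtzetaL}), and \assref{ass:LthtzetaL} itself holds for every $h\le h_1$ since its left-hand side increases in $h$. The plan is to run the synchronous coupling — one velocity $v\sim\mathcal{N}(0,C)$ and one uniform $U\in[0,1]$ shared by both chains — and to split according to whether both proposals are accepted, both rejected, or exactly one is accepted. Writing $\bar x,\bar y$ for the position components of $\psi^{(T)}_h(x,v),\psi^{(T)}_h(y,v)$, $\alpha_x=\alpha(x,v)$, $\alpha_y=\alpha(y,v)$, and $D(v):=\max\{\norm{\bar x-y}_l,\norm{x-\bar y}_l\}$ for the size of a mismatched jump, conditioning on $v$ (and assuming $x\neq y$, the other case being trivial) gives
\begin{equation*}
	\bb{E}_U\bigl[R'(x,y)\mid v\bigr]=\min\{\alpha_x,\alpha_y\}\norm{\bar x-\bar y}_l+\abs{\alpha_x-\alpha_y}D(v)+\bigl(1-\max\{\alpha_x,\alpha_y\}\bigr)r(x,y).
\end{equation*}
Using $\min\{\alpha_x,\alpha_y\}+\bigl(1-\max\{\alpha_x,\alpha_y\}\bigr)=1-\abs{\alpha_x-\alpha_y}$, this is at most $\bigl(1-\min\{\alpha_x,\alpha_y\}(1-\norm{\bar x-\bar y}_l/r(x,y))\bigr)r(x,y)+\abs{\alpha_x-\alpha_y}D(v)$. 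I would then average over $v$, splitting on $\mathcal{G}=\{\norm{v}_l\le R\}$, which has $\bb{P}(\mathcal{G}^c)\le\zeta/(2000(L+1))$ by the hypothesis on $R$ and on which $\max\{\norm{x}_l,\norm{y}_l,\norm{v}_l\}\le R$.

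On $\mathcal{G}$, once $h$ is small enough that $(1+2R)h\le\zeta/K$, \lemref{lemma:contraction} applies with $t=T$ and gives $\norm{\bar x-\bar y}_l\le\sqrt{1-\tfrac{1}{12}\zeta T^2}\,r(x,y)\le(1-\tfrac{1}{24}\zeta T^2)r(x,y)$; moreover the first bound of \lemref{lemma:DH} gives $\abs{\Delta H(x,v)},\abs{\Delta H(y,v)}\le K_1Th^2(1+R)^3$, so if $h$ is further shrunk so that this is at most $c_0\zeta T^2$ then $\alpha_x,\alpha_y\ge 1-c_0\zeta T^2$. Because the left side of \assref{ass:LthtzetaL} dominates $(1+L)T^2$ we get $T^2\le\zeta/(1+L)^2$, hence $\zeta T^2\le1$ (using $\zeta\le L+1$), and combining these three facts yields, on $\mathcal{G}$, $\bb{E}_U[R'(x,y)\mid v]\le(1-(\tfrac{1}{24}-\epsilon_1)\zeta T^2)r(x,y)+\abs{\alpha_x-\alpha_y}D(v)$ with $\epsilon_1\to0$ as $c_0\to0$. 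On $\mathcal{G}^c$ there is no contraction, but (\ref{equ:same_vel_zbound}) still yields $\norm{\bar x-\bar y}_l\le(1+c_2)r(x,y)$ with $c_2:=T^2+L(T^2+2hT)\le 3(L+1)T^2$ (as $h\le T$), so $\bb{E}_U[R'(x,y)\mid v]\le(1+c_2)r(x,y)+\abs{\alpha_x-\alpha_y}D(v)$ there.

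For the mismatch term, uniformly in $v$: $t\mapsto 1\wedge e^{-t}$ is $1$-Lipschitz, so $\abs{\alpha_x-\alpha_y}\le\abs{\Delta H(x,v)-\Delta H(y,v)}$, and integrating the derivative bound of \lemref{lemma:DH} along the segment $[y,x]$ (on which $\norm{\cdot}_l\le R$) gives $\abs{\alpha_x-\alpha_y}\le K_2Th^2\,r(x,y)(1+\max\{R,\norm{v}_l\}^3)$; \lemref{lemma:qv_prioribounds} bounds $\norm{\bar x}_l,\norm{\bar y}_l$ — hence $D(v)$ — by $K_3(1+R+\norm{v}_l)$; multiplying and taking $\bb{E}_v$, with $\bb{E}_v[\norm{v}_l^4]<\infty$ and dimension-independent by Fernique's theorem, gives $\bb{E}_v[\abs{\alpha_x-\alpha_y}D(v)]\le K_4Th^2(1+R)^4\,r(x,y)$. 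Assembling,
\begin{equation*}
	\bb{E}\bigl[R'(x,y)\bigr]\le\Bigl(1-(\tfrac{1}{24}-\epsilon_1)\zeta T^2+\bb{P}(\mathcal{G}^c)\bigl(c_2+\zeta T^2\bigr)+K_4Th^2(1+R)^4\Bigr)r(x,y),
\end{equation*}
and since $\bb{P}(\mathcal{G}^c)\le\zeta/(2000(L+1))$, $c_2\le 3(L+1)T^2$ and $\zeta\le L+1$, the middle term is at most $\zeta T^2/500$, while shrinking $h_0$ further makes $K_4Th_0^2(1+R)^4\le\epsilon_2\zeta T^2$ for any prescribed $\epsilon_2$. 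Picking $c_0$ (hence $\epsilon_1$) and $\epsilon_2$ so that $\tfrac{1}{24}-\epsilon_1-\tfrac{1}{500}-\epsilon_2\ge\tfrac{1}{27}$ gives the contraction.

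The three constraints on $h_0$ — $(1+2R)h_0\le\zeta/K$ (for \lemref{lemma:contraction}), $K_1Th_0^2(1+R)^3\le c_0\zeta T^2$ (so $\alpha_x,\alpha_y$ are within $O(\zeta T^2)$ of $1$), and $K_4Th_0^2(1+R)^4\le\epsilon_2\zeta T^2$ (for the mismatch) — give $h_0^{-1}=O(1+R)$ from the first and $h_0^{-1}=O(T^{-1/2}(1+R^2))$ from the other two, so $h_0^{-1}=O\bigl((1+T^{-1/2})(1+R^2)\bigr)$; every constant here depends only on $L,L',M$, the fourth-derivative bound from \assref{ass:DPhi2BoundM} and $C$, never on the discretisation dimension, which is what lets the bound survive the $N\to\infty$ limit needed for \thmref{thm:contraction_hs}. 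The step I expect to be the main obstacle is the bad-event estimate together with the tight constant accounting: one must observe that the worst-case expansion factor $c_2$ on $\mathcal{G}^c$ is itself only $O((L+1)T^2)$ (via \assref{ass:LthtzetaL} and (\ref{equ:same_vel_zbound})), so that the fixed tail budget $\zeta/(2000(L+1))$ for $\norm{v}_l$ costs a loss genuinely proportional to $\zeta T^2$ rather than an $O(1)$ loss; and, since the margin down to $\tfrac{1}{27}$ is small, a naive bound like $\min\{\alpha_x,\alpha_y\}\ge\tfrac{1}{2}$ is too lossy, so one needs $\alpha_x,\alpha_y$ within $O(\zeta T^2)$ of $1$, which is precisely what fixes one of the $h_0$ bounds.
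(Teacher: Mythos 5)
Your proposal is correct and follows essentially the same route as the paper: a synchronous coupling of $v$ and $U$, a case split on the acceptance events, \lemref{lemma:contraction} on the good set $\{\norm{v}_l\le R\}$, the a priori expansion bound (\ref{equ:same_vel_zbound}) on its complement, both parts of \lemref{lemma:DH} for the acceptance probabilities and their mismatch, and the same three constraints determining $h_0$. The only cosmetic slip is writing the conditional expectation over $U$ with an equality involving $D(v)$, which is really an upper bound since only one of the two mismatched distances occurs; this does not affect the argument.
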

	\begin{proof}
		Remember that we use the same uniform random variable $U \sim \text{Unif}(0,1)$ to determine if the move from $x$ or $y$ is accepted or rejected, see the discussion before \thmref{thm:contraction_hs}. We define the acceptance event when the dynamics are started in $x$ as
		\begin{equation*}
			A(x) =\{U  \le \exp(-\Delta H(x, v)\}.
		\end{equation*}
		We see that
		\begin{align*}
			R'(x, y) &= \norm{q_T(x, v) - q_T(y, v)}_l && \text{on } A(x) \cap A(y)\\
			R'(x, y) &= r(x, y) && \text{on } A(x)^C \cap A(y)^C.
		\end{align*}
		On $A(x) \cap A(y)^C$ we have $y' = y$ and thus
		\begin{equation*}
			R'(x, y) - r(x, y) = \norm{q_T(x, v) - y}_l - \norm{x - y}_l \le \norm{q_T(x, v) - x}_l
		\end{equation*}
		and by a similar argumentation on $A(x)^C \cap A(y)$
		\begin{equation*}
			R'(x, y) - r(x, y) \le \norm{q_T(y, v) - y}_l.
		\end{equation*}
		Therefore
		\begin{equation}
			\label{equ:aufsplitten_terms_Rr}
			\begin{aligned}
				\bb{E}\left[R'(x, y) - r(x, y)\right] \le& \bb{E}\left[\norm{q_T(x, v) - q_T(y, v)}_l - \norm{x - y}_l; A(x) \cap A(y)\right] \\
				&+ \bb{E}\left[\norm{q_T(x, v) - x}_l; A(x) \cap A(y)^C\right] \\
				&+ \bb{E}\left[\norm{q_T(y, v) - y}_l; A(x)^C \cap A(y)\right].
			\end{aligned}
		\end{equation}
		We start by bounding the first term. Choose $K \in \intoo{0, \infty}$ as in \lemref{lemma:contraction} and assume that $h \le \min\{h_1, h_2\}$ where $h_2 = \frac{\zeta}{K(1 + 2R)}$. Then by \lemref{lemma:contraction} we have
		\begin{equation*}
			\norm{q_T(x, v) - q_T(y, v)}_l \le (1 - \frac{1}{24}\zeta T^2)\norm{x - y}_l
		\end{equation*}
		where we used that $\sqrt{(1 - \frac{1}{12}\zeta T^2)} \le (1 - \frac{1}{24}\zeta T^2)$. Therefore by \lemref{lemma:zw_prioribounds}
		\begin{align*}
			&\bb{E}\left[\norm{q_T(x, v) - q_T(y, v)}_l - \norm{x - y}_l; A(x) \cap A(y)\right] \\
			\le& -\frac{1}{24}\zeta T^2r(x, y)\bb{P}\left[A(x) \cap A(y) \cap \{\norm{v}_l \le R\}\right]\\
			&+ (T^2(L+1) + 2LTh)r(x, y)\bb{P}\left[\norm{v}_l > R\right].
		\end{align*}
		We now use that
		\begin{align*}
			&-\bb{P}\left[A(x) \cap A(y) \cap \{\norm{v}_l \le R\}\right] \le -\bb{P}\left[A(x) \cap A(y)\right] + 1 - \bb{P}\left[\{\norm{v}_l \le R\}\right]
		\end{align*}
		to see
		\begin{align*}
			&\bb{E}\left[\norm{q_T(x, v) - q_T(y, v)}_l - \norm{x - y}_l; A(x) \cap A(y)\right]\\
			\le& -\frac{1}{24}\zeta T^2r(x, y)\bb{P}\left[A(x) \cap A(y)\right] + (\frac{25}{24}T^2(L+1) + 2LTh)r(x, y)\bb{P}\left[\norm{v}_l > R\right]\\
			\le& -\frac{1}{24}\zeta T^2r(x, y)\bb{P}\left[A(x) \cap A(y)\right] + (\frac{73}{24}T^2(L+1))r(x, y)\bb{P}\left[\norm{v}_l > R\right].
		\end{align*}
		We now first want to lower bound the probability $\bb{P}[A(x) \cap A(y)]$. We do this by upper bounding $\bb{P}[A(x)^c]$.
		For $h \le h_1$ \assref{ass:Ltht1} holds since it is implied by \assref{ass:LthtzetaL}. Therefore we can use (\ref{equ:lemma_DH_DHBOUND}) and see that
		\begin{align*}
			\bb{P}\left[A(q)^C | v\right] &= \abs{1 - \exp(-\Delta H(q, v)^{+})} \\
			&\le \Delta H(q, v)^{+} \\
			&\le K_1Th^2\left(1 + \max\left\{\norm{q_0}_l, \norm{v_0}_l\right\}^3\right).
		\end{align*}
		We now take the expectation to conclude
		\begin{align*}
			\bb{P}\left[A(q)^C\right] &\le KTh^2\left(1 + \bb{E}\left[\max\left\{\norm{q}_l, \norm{v}_l\right\}^3\right]\right) \\
			&\le  KTh^2\left(1 + \norm{q}_l^3 + K\right) \\
			&= KTh^2(1 + \norm{q}_l^3)
		\end{align*}
		where $K$ is some constant that changes from occurrence to occurrence and only depends on $L, L'$ and $M$. The expectation $\bb{E}[\norm{v}_l^3]$, taken with respect to $\mathcal{N}(0, C_N)$, is bounded independent of the embedding dimension $N$, since we can upper bound it by the infinite dimensional integral, taken with respect to $\mathcal{N}(0, C)$, which is finite due to Fernique's theorem (see \citet{hairer_spde}). 
		
		Therefore if $\norm{x}_l, \norm{y}_l \le R$
		\begin{equation*}
			\bb{P}\left[A(x)^C\right] + \bb{P}\left[A(y)^C\right] \le 2KTh^2(1 + R^3),
		\end{equation*}
		where $K$ only depends on the dimensionless quantities $L, L', M$ and $C$. We choose $h_3 > 0$ such that for $h \le h_3$, the expression on the right hand side is smaller  than $\frac{1}{25}$. With such an $h$ we obtain
		\begin{align*}
			\bb{P}\left[A(x) \cap A(y)\right] \ge 1 - (\bb{P}\left[A(x)^C\right] + \bb{P}\left[A(y)^C\right]) \ge 1 - \frac{1}{24} = \frac{24}{25}.
		\end{align*} 
		Therefore for $h \le \min\{h_1, h_2, h_3\}$
		\begin{align*}
			&\bb{E}\left[\norm{q_T(x, v) - q_T(y, v)}_l - \norm{x - y}_l; A(x) \cap A(y)\right]\\
			\le& -\frac{1}{25}\zeta T^2r(x, y) + (\frac{73}{24}T^2(L+1))r(x, y)\bb{P}\left[\norm{v}_l > R\right]\\
			\le& -\frac{1}{26}\zeta T^2r(x, y) 
		\end{align*}
		where we used that
		\begin{equation}
			\label{equ:bound_R2_prob}
			\bb{P}\left[\norm{v}_l > R\right] \le \frac{\zeta}{2000(L+1)}.
		\end{equation}
		(\ref{equ:bound_R2_prob}) in the last inequality. To bound the second term in (\ref{equ:aufsplitten_terms_Rr}) we bound the probability that we land in $A(x)\Delta A(y) = \left(A(x) \cap A(y)^C\right) \cup \left(A(x)^C \cap A(y)\right)$. For that task we use \lemref{lemma:DH} and calculate
		\begin{equation}
			\label{equ:bound_sym_diff}
			\begin{aligned}
				&\bb{P}[A(x) \Delta A(y) | v] \\
				=& \abs{\exp(-\Delta H(x, v)^+) - \exp(-\Delta H(y, v)^+)}\\
				\le& \abs{\Delta H(x, v) - \Delta H(y, v)}\\
				\le& \int_0^1 \abs{\partial_{(y - x, 0)}\Delta H(x_u, v)} \dif u \\
				\le& K_2Th^2\norm{y - x}_l \left(1  + \int_0^1 \max\{\norm{x_u}_l, \norm{v}_l\}^3 \dif u\right) \\
				\le& K_2Th^2\norm{y - x}_l \left(1  + \max\{\norm{x}_l, \norm{y}_l, \norm{v}_l\}^3 \right)
			\end{aligned}
		\end{equation}
		where we have set $x_u = (1-u)x + uy$. 
		We note that by \lemref{lemma:qv_prioribounds}
		\begin{align*}
			\norm{q_T(x, v) - x}_l \le& T\norm{v}_l + \max\{\norm{x}_l, \norm{x + Tv}_l\} + L'(T^2 + 2Th) \\
			\le& \norm{x}_l + 2T\norm{v}_l + L'(T^2 + 2Th)
		\end{align*}
		since $T$ satisfies \assref{ass:Ltht1}. After making a similar computation for $\norm{q_T(y, v) - y}_l$ we can bound the second and third term in (\ref{equ:aufsplitten_terms_Rr}) by
		\begin{equation}
			\label{equ:final_bound_term_2_3}
			\begin{aligned}
				&\bb{E}\left[\norm{q_T(x, v) - x}_l; A(x) \cap A(y)^C\right] + \bb{E}\left[\norm{q_T(y, v) - y}_l; A(x)^C \cap A(y)\right] \\
				\le& \bb{E}\left[\max\{\norm{x}_l, \norm{y}_l\} + L'(T^2 + 2Th) + 2T\norm{v}_l; A(x) \Delta A(y)\right] \\
				\le& \bb{E}[\left((1 + 2T)\max\{\norm{x}_l, \norm{y}_l, \norm{v}_l\} + L'(T^2 + 2Th)\right)\\
				&\qquad\times KTh^2\norm{y - x}_l \left(1  + \max\{\norm{x}_l, \norm{y}_l, \norm{v}_l\}^3 \right)]\\
				&\le \left(1 + 2T + L'(T^2 + 2Th)\right)\left(1 + \bb{E}[\max\{\norm{x}_l, \norm{y}_l, \norm{v}_l\}^4]\right)KTh^2r(x, y) \\
				&\le K\left(1 + 2T + L'(T^2 + 2Th)\right)\left(1 + R^4\right)Th^2r(x, y) \\
			\end{aligned}
		\end{equation}
		where we used (\ref{equ:bound_sym_diff}) in the second inequality and the integrability of $\norm{v}_l^3$ due to Fernique's theorem in the last inequality. $K$ depends on $L, L', M, N$ and $C$. We now choose $h_4$ such that the right hand side of (\ref{equ:final_bound_term_2_3}) is smaller than $\frac{1}{702}\zeta T^2 r(x, y)$. Then 
		\begin{align*}
			\bb{E}[R'(x, y) - r(x, y)] \le -\frac{1}{26}\zeta T^2r(x, y) + \frac{1}{702}\zeta T^2 r(x, y) \le -\frac{1}{27} \zeta T^2 r(x, y)
		\end{align*}
		which concludes the proof for $h_0 = \min\{h_2, h_3, h_4\}$. At last we now consider the dependence of the $h_i$ on $T, h$, and $R$. $h_2^{-1}$ has a dependence of the form $\mathcal{O}({1 + R})$. $h_3^{-1}$ has a dependence of the form $\mathcal{O}(\sqrt{T(1 +R^3)})$. $h_4^{-1}$ has a dependence of the form $\mathcal{O}({\sqrt{\left(1 + T^{-1}\right)(1 + R^4)}})$. Therefore $h_0^{-1}$ has to be of order $\mathcal{O}((1 + T^{-1/2})(1 + R^2))$. 
	\end{proof}
	\subsubsection{Proofs on the Hilbert space}
	Now we prove \thmref{thm:exact_pHMC_contractive_hs} and \thmref{thm:contraction_hs}. We already saw a proof in finite dimensions of \thmref{thm:contraction_hs}. \lemref{lemma:contraction} is basically the proof of \thmref{thm:exact_pHMC_contractive_hs} in finite dimensions. All that is left to do is to take the limit $N \to \infty$.
	\begin{proof}[Proof of \thmref{thm:exact_pHMC_contractive_hs}]
		It is similar to the proof of \thmref{thm:contraction_hs} but with references to \propref{prop:exact_dynamics_fd_converge} instead of \propref{prop:discrete_dynamics_inequalities} for pointwise convergence and dominated convergence.
	\end{proof}
	
	\begin{proof}[Proof of \thmref{thm:contraction_hs}]
		For $q, v \in \mathcal{H}^l$ we denote by $q^N, v^N$ the projections to $\mathcal{H}^N \approx \bb{R}^N$. After applying one iteration of the pHMC algorithm in $\bb{R}^N$ we get the output ${q^N}', {v^N}'$. Let $x, y \in \mathcal{H}^l$. By \thmref{thm:contraction_Rd} we know that
		\begin{equation}
			\label{equ:splitting_up_fdd_contraction}
			\bb{E}\left[\norm{{x^N}' - {y^N}'}_l \right] \le (1 - \epsilon)\norm{x^N - y^N}_l \le (1 - \epsilon)\norm{x - y}_l
		\end{equation}
		for $\epsilon = \frac{1}{27}\zeta T^2$. We want to use dominated convergence on the left hand side. We now use the same uniform random variable $U \sim \mbox{Unif}(\intcc{0, 1})$ in the acceptance probability for the algorithm on the Hilbert space itself and all the discretizations. We also sample $v$ directly on the Hilbert space $\mathcal{H}$ and use the projection $v^N$ of $v$ to $\mathcal{H}^N$ as the input for the finite dimensional algorithms. To be specific, one step of the finite dimensional algorithm consists of first projecting $v \sim \mathcal{N}(0, C)$ to $\mathcal{H}^N$ and calling the projection $v^N$. We then propose $({q^N}', {v^N}') = \psi^{(T)}_{h, N}$. The move is accepted if $U \le \alpha_N(q^N, v^N)$. We define
		\begin{align*}
			&A^N(x) = \{U \le \alpha_N(x, v)\} \text{ and } A(x) = \{U \le \alpha(x, v)\}.
		\end{align*}
		We now split up the expectation on the r.h.s of (\ref{equ:splitting_up_fdd_contraction}):
		\begin{equation}
			\begin{aligned}
				\bb{E}\left[\norm{{x^N}' - {y^N}'}_l \right] =&	\bb{E}\left[R'(x_N, y_N)\right] \\
				=& \bb{E}\left[\norm{q_T^N(x, v) - q_T^N(y, v)}_l 1_{A^N(x) \cap A^N(y)}\right] \\
				&+ \bb{E}\left[\norm{q_T^N(x, v) - y^N}_l 1_{A^N(x) \cap A^N(y)^C}\right] \\
				&+ \bb{E}\left[\norm{q_T^N(y, v) - x^N}_l 1_{A^N(x)^C \cap A^N(y)}\right] \\
				&+ \bb{E}\left[\norm{x^N - y^N}_l 1_{A^N(x)^C \cap A^N(y)^C}\right].
			\end{aligned}
		\end{equation}
		Since, for every $v$, the acceptance probabilities $\alpha^N(x,v)$ and $\alpha^N(y, v)$ converge to $\alpha(x, v)$ and $\alpha(y, v)$ respectively and the random variable $U$ used in the acceptance events $A^N(q)$ and $A(q)$ is the same we conclude that $1_{A^N(q)} \to 1_{A(q)}$ almost surely. By \propref{prop:discrete_dynamics_inequalities} we know that $x^N \to x, y^N \to y$. \lemref{lemma:convergence_acceptance_prob} on the other hand tells us that $q_T^N(x, v) \to q_T(x, v)$ and $q_T^N(y, v) \to q_T(y, v)$ for $\Pi_0$-almost every $(x, v)$ and $(y, v)$ respectively. Therefore all integrands above converge almost surely to their corresponding infinite dimensional counterpart, i.e.
		\begin{equation*}
			R'(x_N, y_N) \to R'(x, y)
		\end{equation*}
		almost surely. To use dominated convergence we note that by \propref{prop:discrete_dynamics_inequalities} we know that each of the terms $\norm{x^N}_l, \norm{y^N}_l, \norm{q_T^N(x, v)}_l, \norm{q_T^N(y,v)}_l$ can be bounded by $K(1 + \norm{x}_l + \norm{y}_l + \norm{v}_l) < \infty$. Therefore we can use triangle inequalities on each of the norm terms, e.g. $\norm{x^N - y^N}_l \le \norm{x^N}_l + \norm{y^N}_l$, and then bound all of them by $K(1 + \norm{x}_l + \norm{y}_l + \norm{v}_l)$ which is integrable since $\norm{v}_l$ is integrable by Fernique's theorem (see \citet{hairer_spde}). The indicator function terms can be bounded by $1$. Therefore we can use dominated convergence and conclude the proof.
	\end{proof}
	\section{Numerical Experiments}
	\label{sec:numerical_experiments}
	We now do some numerical experiments to illustrate the proved convergence bounds.
	We sample $\exp(-\Phi(q))\dif\pi_0$ where $\pi_0$ is the Brownian Bridge measure on $L^2([0,1])$. In the first experiment, $\Phi$ is defined as $\Phi(q) = \int_0^1 q(s) \dif s$. In the second experiment we set $\Phi(q) = (\int_0^1 q(s)^2 \dif s - 1)^2$. We use the orthonormal basis $\{\sqrt{2}\sin(n\pi x)\}_{i=1}^\infty$ of $L^2([0,1])$ (which we got from Karhunen-Lo\`{e}ve expansion of $\pi_0$) and expand $q$ as 
	$$q = \sum_{i=1}^N q_i \sqrt{2}\sin(n\pi x).$$ 
	We represent $q$ through the $q_i$ in the numerical simulation. This makes $C$ a diagonal matrix. $N$ is chosen to be $5000$. We use a stepsize of $h=0.2$ and make $13$ steps in each iteration, i.e. $T = 12*0.2 = 2.4$. For both experiments, we plot the first few iterates. We also plot the iterations of pHMC against the $L^2$-distance of the two samples. One iteration means taking $12$ steps of stepsize $h=0.2$ and then either accepting or rejecting the new sample. The two copies are coupled in the previously discussed way, by using the same velocity $v$ and the same uniform random variable $U \sim \text{Unif}(0,1)$ in the acceptance-rejection step. 
	
	We plot the initial state of the two coupled pHMC algorithms in the first picture of Fig. \ref{fig:linearcouplingsamples}. The second and third picture are the first iterates that the pHMC algorithms produced for the potential $\Phi(q) = \int_0^1 q(s) \dif s$. The coupled copies of pHMC for the potential $\Phi(q) = (\int_0^1 q(s)^2 \dif s - 1)^2$ got the same initial states but of course produced a different iterate 2 and iterate 3. 
	
	In Fig. \ref{fig:couplingdistancecomparison} we plot the $L^2$ distance of the iterates of the coupled algorithms. We see that they behave very similar for both potentials, although in the case $\Phi(q) = (\int_0^1 q(s)^2 \dif s - 1)^2$, there are a few points in the algorithm where one or both of the iterates is rejected and the distance does not decrease. For both potentials, around the $130$th iteration the two copies are so close to each other that they have the same numerical representation due to rounding happening in the computer. Shortly before the convergence happens, the distance decrease stops being as smooth as before. In that regime this effect is probably due to rounding errors.
	\begin{figure}[!t]
		\centering
		\includegraphics[width=\linewidth]{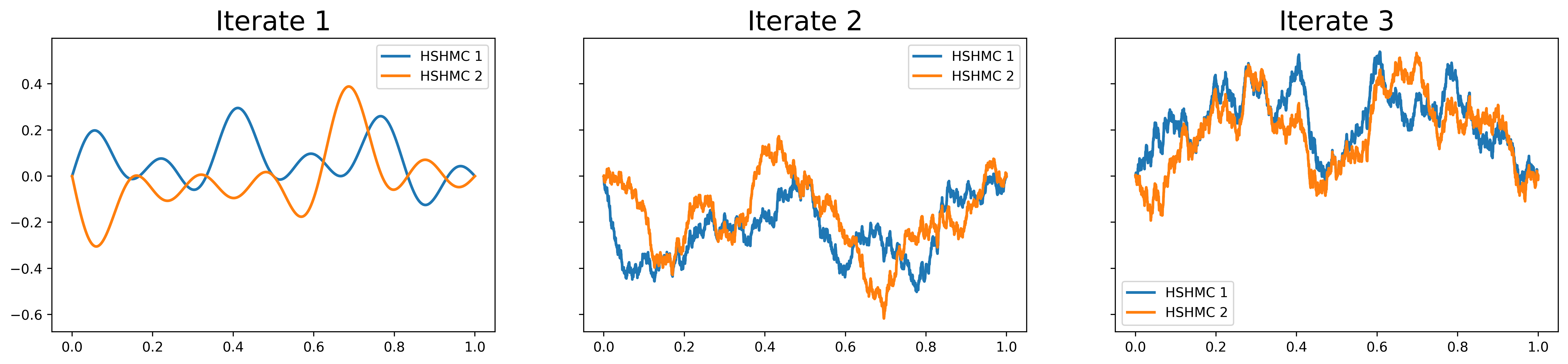}
		\caption{The samples produced by the first 3 iterations of two coupled copies of pHMC for the probability measure proportional to $\exp(-\Phi(q))\dif \pi_0$ with $\Phi(q) = \int_0^1 q(s) \dif s$ and $\pi_0$ the Brownian Bridge measure on $L^2([0,1])$. The first iterate was input to the algorithm as initial state.}
		\label{fig:linearcouplingsamples}
	\end{figure}

	\begin{figure}
		\centering
		\includegraphics[width=\linewidth]{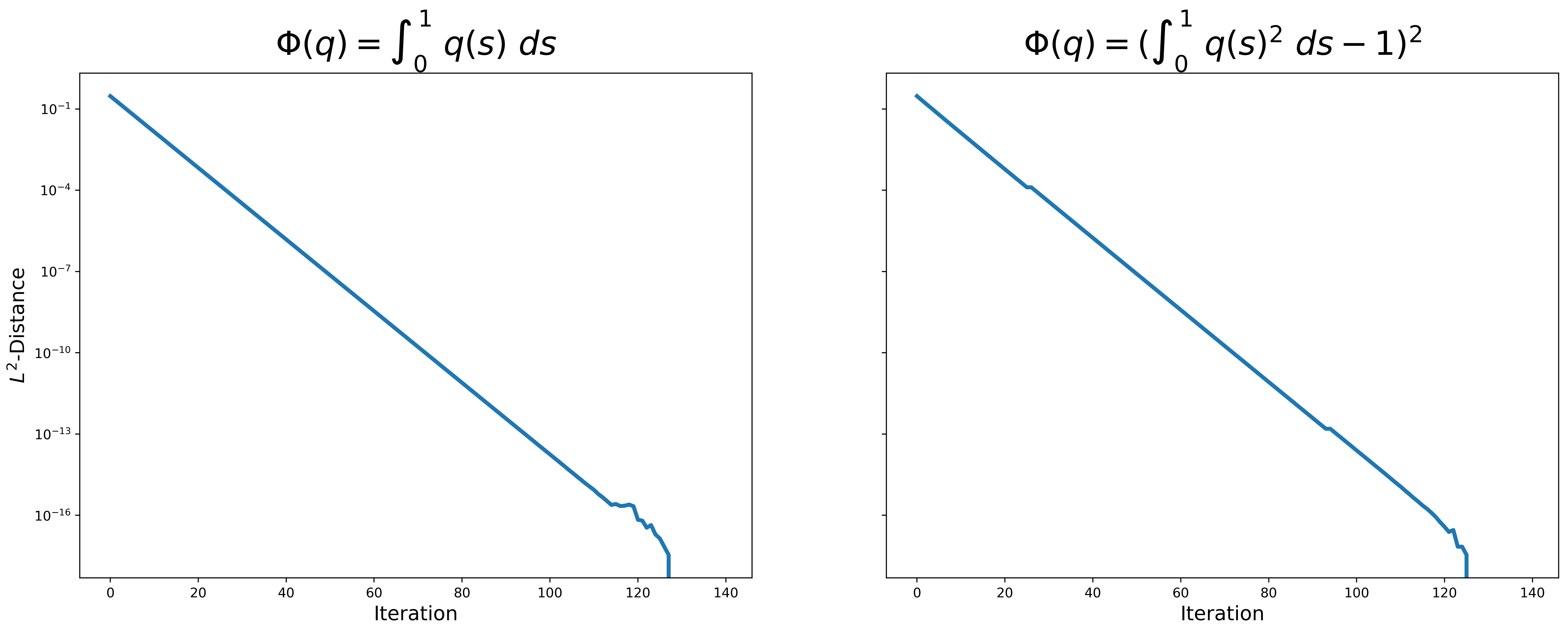}
		\caption{The $L^2$-distance of the iterates of two coupled copies of pHMC which are sampling the probability measure proportional to $\exp(-\Phi(q))\dif \pi_0$, where $\pi_0$ is the Brownian Bridge measure on $L^2([0,1])$. The qualitative and quantitative behaviour is similar.}
		\label{fig:couplingdistancecomparison}
	\end{figure}
	
	To check how stable the algorithm is under choice of the basis we use to represent $q$, we made another experiment. This time we represented $q$ as $\{q(x_1), q(x_2), \ldots, q(x_N)\}$, where $x_1, \ldots, x_N$ are $N = 5000$ evenly spaced points in $[0, 1]$. The covariance matrix is not diagonal any more, it has entries $\min(s, t) - st$. The initial convergence behaviour is nearly the same, but when the two iterates are very close it takes longer for them to be rounded to the same numerical representation. The $L^2$-distances are plotted in Fig. \ref{fig:couplingdistancecomparisonpointevalulations}.
	
	\begin{figure}
		\centering
		\includegraphics[width=\linewidth]{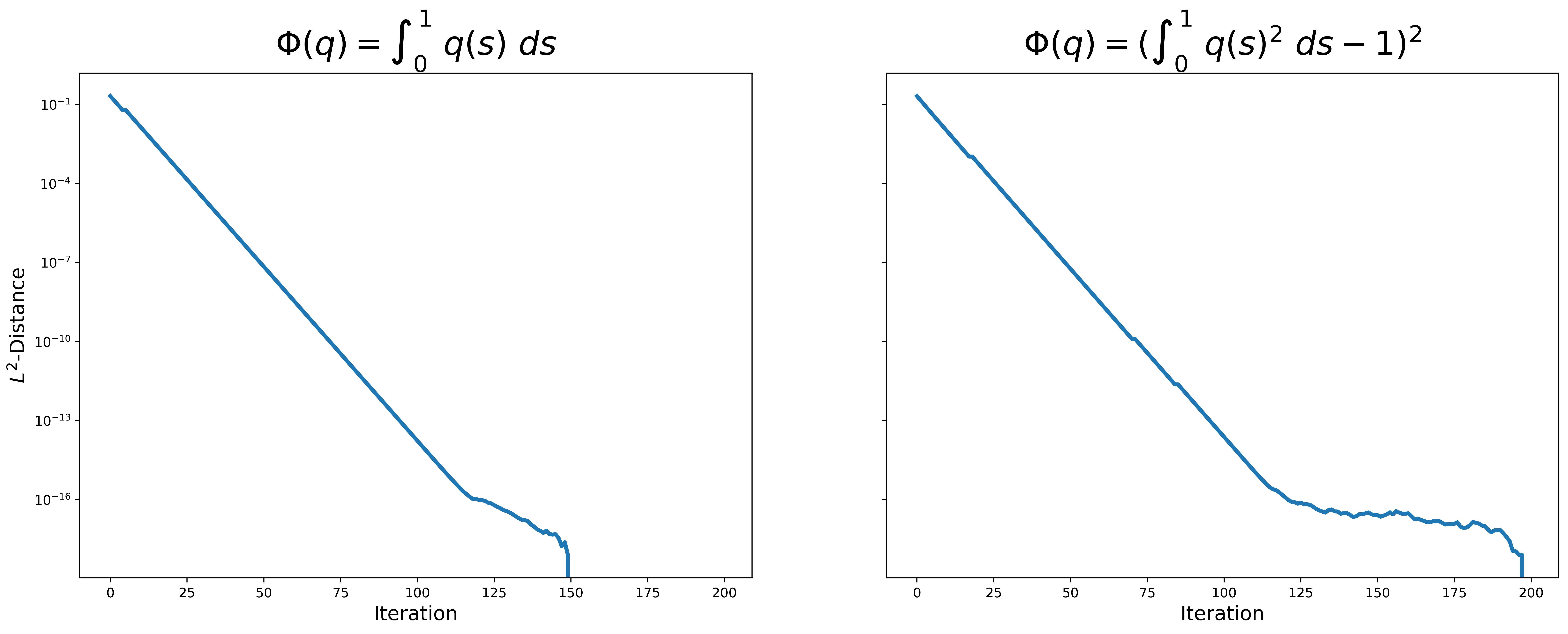}
		\caption{The same experiment as in Fig. \ref{fig:couplingdistancecomparison}, but numerically representing $q$ through point evaluations $\{q(x_1), \ldots, q(x_{5000})\}$ instead of the Karhunen-Lo\`{e}ve basis.}
		\label{fig:couplingdistancecomparisonpointevalulations}
	\end{figure}
	
	In our last experiment we took a double-well potential 
	\begin{equation}
		\label{equ:phi_double_well}
		\Phi(q) = \frac{\gamma}{2} \int_0^1 (q(s) - \frac{1}{2})^2(q(s) + \frac{1}{2})^2 \dif s = \frac{\gamma}{2} \int_0^1 (q(s)^2 - \frac{1}{4})^2 \dif s
	\end{equation}
	and started the pHMC algorithm near the two different minima of $\Phi$, which are the constant functions $\frac{1}{4}$ and $-\frac{1}{4}$. You see the initial iterates for $\gamma = 20$ in Fig. \ref{fig:doublewellgamma20couplingsamples}. You see the $L^2$-distance of the iterates for different values of $\gamma$ in Fig. \ref{fig:distancedoublewell}. Note that for this value of $\gamma$ pHMC has no problem transitioning between the two minima of $\Phi$, it does so directly in the first and second iteration. For $\gamma \le 20$ we observed that the iterates converged to each other in all of our experiments. For values bigger than that, it only sometimes seemed to converge. Using $\gamma$-values bigger than $40$ we never observed convergence. This does not contradict our results as $\Phi$ of course does not satisfy our assumptions. Especially it is not convex. A different coupling technique, as done in \citet{bou2018coupling} and \citet{bou2020two} could work better in these cases. In these experiments we again represented $q$ through its point evaluations on an evenly spaced grid of $N = 5000$ points.  
	Finally we plot the average distance over $20$ runs of the algorithm for different potentials in Fig. \ref{fig:4thordercouplingdistanceaverage}.
	\begin{figure}
		\centering
		\includegraphics[width=1\linewidth]{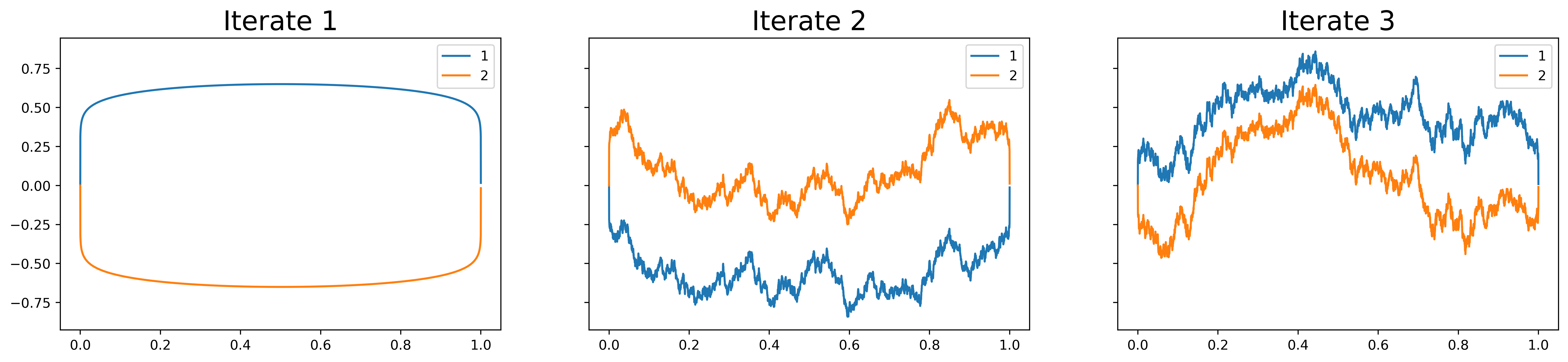}
		\caption{The initial iterates for two coupled pHMC algorithms sampling a measure proportional to $\exp(-\Phi(q))\dif \pi_0(q)$, where $\pi_0$ is the Brownian Bridge measure and $\Phi(q)$ is the double well potential $\frac{\gamma}{2} \int_0^1 (q(s)^2 - \frac{1}{4})^2 \dif s$ and $\gamma = 20$.}
		\label{fig:doublewellgamma20couplingsamples}
	\end{figure}
	\begin{figure}
		\centering
		\includegraphics[width=1\linewidth]{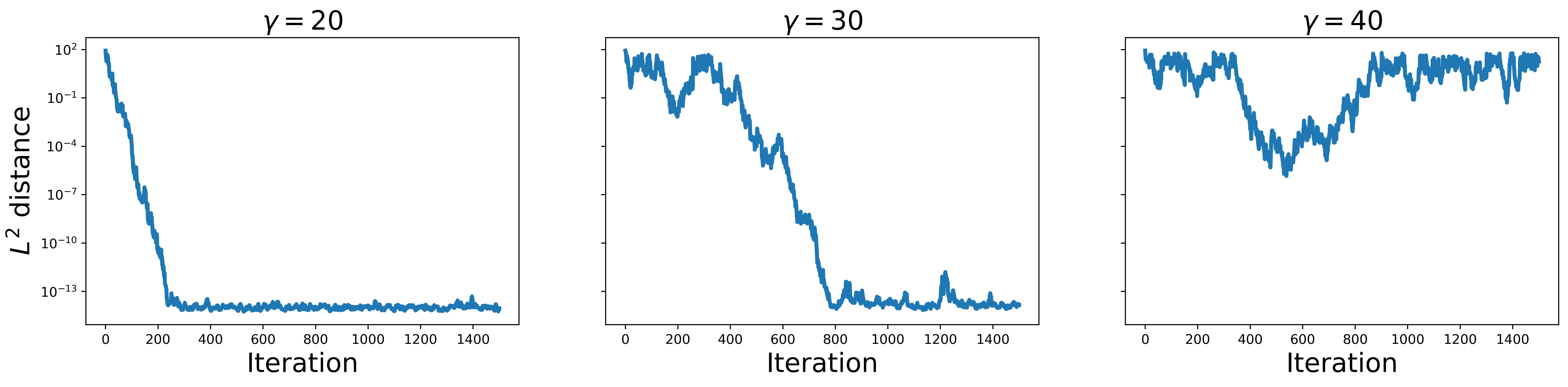}
		\caption{The $L^2$ distance of the coupled iterates of the coupled pHMC algorithms sampling the same measure as in Fig. \ref{fig:doublewellgamma20couplingsamples} for different values of $\gamma$.}
		\label{fig:distancedoublewell}
	\end{figure}
	\begin{figure}
		\centering
		\includegraphics[width=\linewidth]{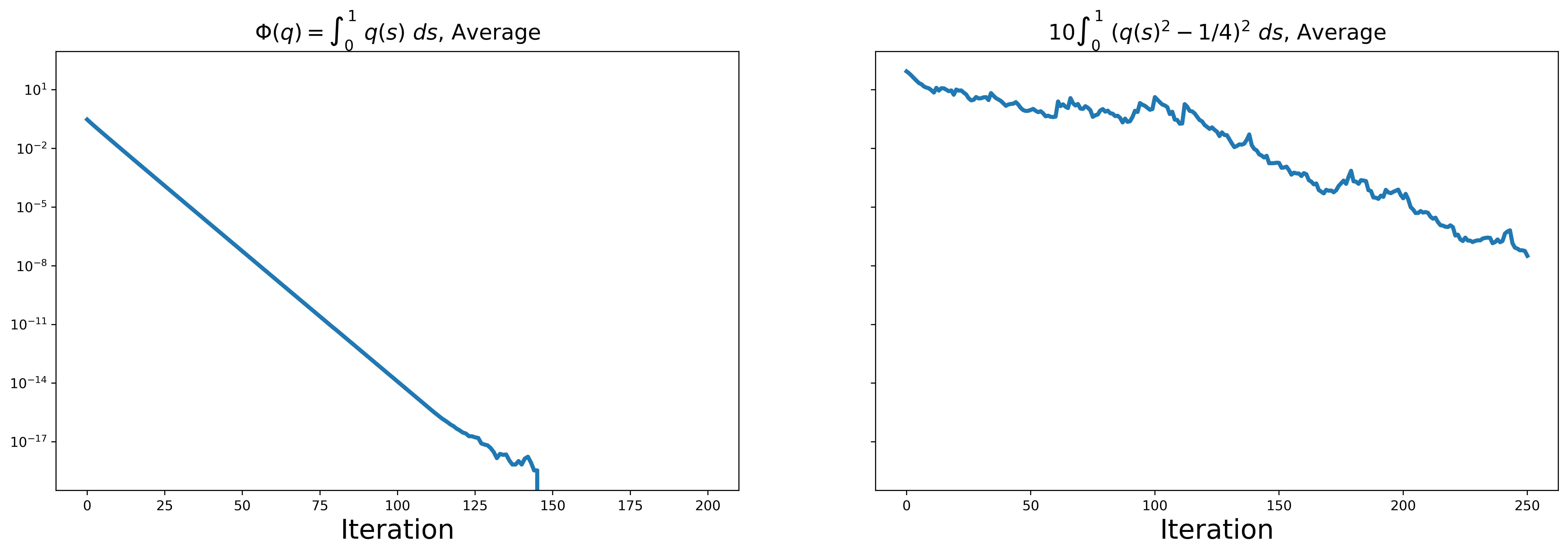}
		\caption{The same setups as in Fig. \ref{fig:couplingdistancecomparison} and Fig. \ref{fig:distancedoublewell} for $\gamma = 20$. This time we plot the average distance over 20 runs of the algorithm.}
		\label{fig:4thordercouplingdistanceaverage}
	\end{figure}
	\subsection{Conclusion}
	We have proven some quantitative bounds on the contraction rate of exact pHMC and showed that adjusted pHMC also contracts under restrictions on the size of the norm of the position variable $\norm{q}_l$. These results hold on the Hilbert space itself and do not depend on the embedding dimension. Further work could transfer the approaches of \citet{bou2018coupling} and \citet{bou2020two} to work for adjusted pHMC and multimodal potentials. Another interesting route is to understand how unadjusted pHMC behaves in infinite dimensions, analogous to the work \citet{bou2020convergence}. 
	
	\subsection{Acknowledgement}
I want to thank Andreas Eberle for choosing this exciting topic for my master thesis and supporting me during the writing. Furthermore, I want to thank Nawaf Bou-Rabee and Katharina Schuh for supporting me during the preparation of this publication and helping me in understanding new advances in this area. I also want to thank the SFB1294 for providing a productive working environment.

	\bibliography{paper}
\end{document}